\newtheorem{theo}{Theorem}
\newtheorem{lemm}{Lemma}[section]
\newtheorem{coro}[lemm]{Corollary}
\newtheorem{prop}[lemm]{Proposition}
\newtheorem{defi}[lemm]{Definition}
\newtheorem{rema}[lemm]{Remark}
\newcommand{\mL}{\mathcal{L}}
\newcommand{\K}{\mathcal{K}}
\newcommand{\T}{\mathscr{T}}
\newcommand{\R}{\mathbb{R}}
\newcommand{\RN}{\mathbb{R}^N}
\def\ps@pprintTitle{
 \let\@oddhead\@empty
 \let\@evenhead\@empty
 \def\@oddfoot{}%
 \let\@evenfoot\let\@oddfoot }
\title{\Large \textbf{{Finite degrees of freedom for the refined blow-up profile of the  semilinear heat equation}}}
\author{V. T. Nguyen\\
\textit{\small{Department of Mathematics, New York University in Abu Dhabi,\\ Saadiyat Island, Abu Dhabi, United Arab Emirates.}}\\
 H. Zaag \footnote{This author is supported by the ERC Advanced Grant no. 291214, BLOWDISOL and by the ANR project ANA\'E ref. ANR-13-BS01-0010-03.\\ -----------------\\ \today} \\ \textit{\small{Universit{\'e} Paris 13, Sorbonne Paris Cit{\'e},\\
LAGA, CNRS (UMR 7539), F-93430, Villetaneuse, France.}}}
\begin{document}
\begin{abstract} 
We refine the asymptotic behavior of solutions to the semilinear heat equation with Sobolev subcritical power nonlinearity which blow up in some finite time at a blow-up point where the (supposed to be generic) profile  holds. In order to obtain this refinement, we have to abandon the explicit profile function as a first order approximation, and take a non explicit function as a first order description of the singular behavior. This non explicit function is in fact a special solution which we construct, obeying some refined prescribed behavior. The construction relies on the reduction of the problem to a finite dimensional one and the use of a topological argument based on index theory to conclude. Surprisingly, the new non explicit profiles which we construct make a family with finite degrees of freedom, namely $\frac{(N+1)N}{2}$ if $N$ is the dimension of the space.\\
 
\noindent \textbf{Keywords:} Finite-time blow-up, single point blow-up, blow-up profile, stability, semilinear heat equations. \\

\noindent \textbf{Mathematics Subject Classification:} 35K58, 35K55 (Primary); 35B40, 35B44 (Secondary).
\end{abstract}

\maketitle
\section{Introduction.}
We are interested in the following semilinear heat equation: 
\begin{equation}\label{equ:problem}
\left\{
\begin{array}{rcl}
u_t &=& \Delta u + |u|^{p-1}u, \\
u(0) &=& u_0 \in L^\infty(\mathbb{R}^N),
\end{array}
\right.
\end{equation}
where $u(t): x \in \mathbb{R}^N \to u(x,t) \in \mathbb{R}$, $\Delta$ denotes the Laplacian in $\mathbb{R}^N$, and 
$$p > 1\quad \text{or} \quad 1 < p < \frac{N + 2}{N-2}\quad  \text{if}\quad  N \geq 3.$$

Equation \eqref{equ:problem} is a simple model for a large class of nonlinear parabolic equations. In fact, it captures features common to a whole range of blow-up problems arsing in various physical situations, particularly it highlights the role of scaling and self-similarity. Among related equations, we would like nonetheless to mention: the solid fuel ignition model (Bebernes, Bressan and Eberly \cite{BBEiumj87}), the thermal explosion (Bebernes and Kassoy \cite{BKsiam81}, Kassoy and Poland \cite{KPsiam80}), \cite{KPsiam81}), surface diffusion (Bernoff, Bertozzi and Witelski \cite{BBAWjsp98}), the motion by mean curvature (Soner and Souganidis \cite{SScpde93}), vortex dynamics in superconductors (Chapman, Hunton and Ockendon \cite{CHOqam98}, Merle and Zaag \cite{MZnon97}).  \\

By standard results, the problem \eqref{equ:problem} has a unique classical solution $u(x,t)$ continuous in time with values in $L^\infty(\mathbb{R}^N)$, which exists at least for small times. The solution $u(x,t)$ may develop singularities in some finite time (see  Kaplan \cite{Kapcpam63}, Fujita \cite{FUJsut66},  Levine \cite{LEVarma73}, Ball \cite{BALjmo77}, Weissler \cite{WEIjde84} for the existence of finite-time blow-up solutions to \eqref{equ:problem}). In this case, we say that $u(x,t)$ blows up in a finite time $T < +\infty$ in the sense that 
$$\lim_{t \to T} \|u(t)\|_{L^\infty(\mathbb{R}^N)} = +\infty.$$
Here we call $T$ the blow-up time of $u(x,t)$. In such a blow-up case, we say that $\hat{a} \in \RN$ is a blow-up point of $u$ if $u$ is not locally bounded in the neighborhood of $(\hat{a},T)$, this means that there exists $(x_n,t_n) \to (\hat{a},T)$ such that $|u(x_n,t_n)| \to +\infty$ when $n \to +\infty$.\\

Let us consider $u(t)$ a solution of \eqref{equ:problem} which blows up in finite time $T$ at only one blow-up point $\hat{a}$. From the translation invariance of \eqref{equ:problem}, we may assume that $\hat{a} = 0$.  Studying the solution $u(x,t)$ near the singularity $(0, T)$ is based on the following \textit{similarity variables} (see \cite{GKiumj87, GKcpam89}):
\begin{equation}\label{def:simivars}
\T[u](y,s) = (T-t)^\frac{1}{p-1}u(x,t), \quad y = \frac{x}{\sqrt{T - t}}, \quad s = -\log(T - t),
\end{equation}
and $w = \T[u]$ solves a new parabolic equation in $(y,s)$, 
\begin{equation}\label{equ:w}
\partial_s w = \mL w - \frac{p}{p-1}w + |w|^{p-1}w, \quad (y,s) \in \RN \times [-\log T, +\infty),
\end{equation}
where 
\begin{equation}\label{def:opL}
\mL = \Delta  - \frac{y}{2}\cdot \nabla + 1.
\end{equation}
\noindent In view of \eqref{def:simivars}, the study of $u(x,t)$ as $(x, t) \to (0, T)$ is then equivalent to the study of $\T[u](y,s)$ as $s \to +\infty$, and each result for $u$ has an equivalent formulation in term of $\T[u]$. \\

According to Giga and Kohn in \cite{GKcpam89} (see also \cite{GKcpam85, GKiumj87}), we know that:

\medskip

\noindent\textit{If $\hat{a}$ is a blow-up point of $u$, then
\begin{equation}\label{equ:limitw}
\lim_{t \to T} (T-t)^\frac{1}{p-1}u(\hat{a} + y\sqrt{T-t},t) = \lim_{s \to +\infty} \T[u](y,s) = \pm \kappa,
\end{equation}
uniformly on compact sets $|y| \leq R$, where $\kappa = (p-1)^{-\frac{1}{p-1}}$.}

\medskip

The estimate \eqref{equ:limitw} has been refined until the higher order by Filippas, Kohn and Liu \cite{FKcpam92}, \cite{FLaihn93}, Herrero and Vel\'azquez \cite{HVdie92}, \cite{HVaihn93},  \cite{VELcpde92}, \cite{VELiumj93}, \cite{VELtams93}. More precisely, they classified the behavior of $\T[u](y,s)$ for $|y|$ bounded, and showed that one of the following cases occurs (up to replacing $u$ by $-u$ if necessary),

\medskip

\noindent - \textit{Case 1 (non-degenerate rate of blow-up): There exists $\ell \in \{1, \cdots, N\}$, and up to an orthogonal transformation of space coordinates,
\begin{equation}\label{equ:c1clas}
\forall R > 0, \; \sup_{|y| \leq R}\left| \T[u](y,s) - \left[\kappa +\frac{\kappa}{4ps} \left(2\ell  - \sum_{i = 1}^\ell |y_i|^2\right)\right] \right| = \mathcal{O}\left(\frac{\log s}{s^2}\right).
\end{equation}}
- \textit{Case 2 (degenerate rate of blow-up): There exists $\mu > 0$ such that 
\begin{equation}
\forall R > 0, \; \sup_{|y| \leq R} \left|\T[u](y,s) - \kappa \right| = \mathcal{O}(e^{-\mu s}),
\end{equation}
(this exponential convergence has been refined up to the order 1 by Herrero and Vel\'azquez, but we omit that description since we choose in this work to concentrate on the non-degenerate rate of blow-up mentioned in the case 1 above).}

\medskip

If $\ell = N$, then $\hat{a} = 0$ is an isolated blow-up point from Vel\'azquez \cite{VELcpde92}. Merle and Zaag \cite{MZcpam98, MZgfa98, MZma00} (with no sign condition), and Herrero and Vel\'azquez \cite{VELcpde92, HVaihn93} (in the positive case) established the following blow-up profile in the variable $\xi = \frac{y}{\sqrt{s}}$ (which is the intermediate scale that separates the regular and singular parts in the non-degenerate case):
\begin{equation}\label{equ:c1prof}
\forall R>0,\quad \sup_{|\xi| \leq R} \left|\T[u](\xi\sqrt{s},s) - f(\xi)\right| \to 0 \quad \text{as $s \to +\infty$},
\end{equation}
where 
\begin{equation}\label{def:fk}
f(\xi) = \kappa\left(1 + \frac{p-1}{4p}|\xi|^2\right)^{-\frac{1}{p-1}}.
\end{equation}
Herrero and Vel\'azquez \cite{HVasnsp92} proved that the profile \eqref{def:fk} is generic in the case $N = 1$, and they announced the same for $N \geq 2$, but they never published it. 

Merle and Zaag \cite{MZcpam98}, \cite{MZgfa98}, \cite{MZma00} derived   the limiting profile in the $u(x,t)$ variable, in sense that $u(x,t) \to u^*(x)$ when $t \to T$ if $x \neq 0$ and $x$ is the neighborhood of $0$, with 
\begin{equation}\label{equ:c1limit}
u^*(x)\sim \left[\frac{8p|\log |x||}{(p-1)^2|x|^2} \right]^\frac{1}{p-1} \quad \text{as} \; x \to 0.
\end{equation}
They also showed that all the behaviors \eqref{equ:c1clas} with $\ell = N$, \eqref{equ:c1prof} and \eqref{equ:c1limit} are equivalent. 

Bricmont and Kupiainen \cite{BKnon94}, Merle and Zaag in \cite{MZdm97} showed the existence of initial data for \eqref{equ:problem} such that the corresponding solutions blow up in finite time $T$ at only one blow-up point $\hat{a} = 0$ and verify the behavior \eqref{equ:c1prof}. Note that the method of \cite{MZdm97} allows to derive the stability of the blow-up behavior \eqref{equ:c1prof} with respect to perturbations in the initial data or the nonlinearity (see also Fermanian, Merle and Zaag \cite{FMZma00}, \cite{FZnon00} for other proofs of the stability).\\

In this work, considering the expansions \eqref{equ:c1clas} with $\ell = N$, \eqref{equ:c1prof} and \eqref{equ:c1limit}, we ask whether we can carry on these expansions and obtain lower order estimates. In particular in \eqref{equ:c1limit}, we wonder whether we can obtain the following terms of the expansion, up to bounded functions? In view of the self-similar transformation \eqref{def:simivars}, a necessary condition would be to carry on the expansion \eqref{equ:c1clas} up to the scale of $e^{-\frac{s}{p-1}} = (T-t)^\frac{1}{p-1}$. Unfortunately, any attempt to carry on the expansion \eqref{equ:c1clas} would give bunches of terms in the scale of powers of $\frac{1}{s} = \frac{1}{|\log (T-t)|}$ (with possibly $(\log s)^b$ corrections). This way, instead of reaching the scale of powers of the blow-up variable $(T-t)$, we are trapped in logarithmic scales of that variable, namely $\frac{1}{|\log(T-t)|^a}$. Logarithmic scales also arise in some singular perturbation problems such as low Reynolds number fluids and some vibrating membranes studies (see Ward \cite{Warbook96} and the references therein, see also Segur and Kruskal \cite{SKprl87} for a Klein-Gordon equation). Since the logarithmic scales go to zero slowly, infinite logarithmic series may be of only limited practical use in approximating the exact solution. Relevant approximations, i.e., approximations up to lower order terms $(T-t)^\beta$ for $\beta > 0$, lie beyond all logarithmic scales. In order to escape all logarithmic scales, a possible idea would be to abandon expansions around the explicit profile function \eqref{def:fk}, which happens to be only an approximate solution of equation \eqref{equ:w}, and to linearize around a non explicit profile function which is a solution of equation \eqref{equ:w}. This has been done by Fermanian and Zaag \cite{FZnon00} whose work shows that when linearizing around a fixed solution, say $\hat{u}$ a radially symmetric and decreasing solution to equation \eqref{equ:problem} which blows up in finite time $T$ at only $\hat{a} = 0$, they can reach the order $(T-t)^\beta$ for $\beta > 0$ through a modulation of the dilation of $\hat{u}$, provided that $N = 1$. In this paper, we aim at extending their result to the higher dimensional case. 

Let us explain the difficulty raised in \cite{FZnon00} for the case $N \geq 2$. It is convenient to introduce the following definitions:
\begin{defi}\label{def:setBaT} For all $(a, T) \in \RN \times \R$, we denote by $\mathbb{B}_{a,T}$ the set of all solutions to equation \eqref{equ:problem} which blow up in finite time $T$ at point $x = a$ (not necessary to be unique) and have the stable profile \eqref{equ:c1prof} (or \eqref{equ:c1clas} with $\ell = N$ or \eqref{equ:c1limit}). We denote by $\mathbb{B}'_{a,T}$ the subset of $\mathbb{B}_{a,T}$ where $a$ is the unique blow-up point and where no blow-up occurs at infinity (in the sense that $|u(x,t)| \leq C$ for all $|x| \geq c_0$ and $t \in [0,T)$ for some $C > 0$ and $c_0 > 0$).
\end{defi}
\begin{defi} We denote by $\mathbb{M}_N(\R)$ the set of all symmetric, real $(N \times N)$ matrices.
\end{defi}

\noindent Introducing the following dilation transformation for any $\lambda > 0$, 
\begin{equation}\label{equ:DilaInv}
\mathcal{D}_{\lambda}:\;  u \mapsto \mathcal{D}_{\lambda}u:\; (x,t) \mapsto \lambda^{\frac{2}{p-1}}u\big(\lambda x, T - \lambda^2(T-t)\big), 
\end{equation}
we see that $\mathcal{D}_\lambda$ is one-to-one from $\mathbb{B}_{a,T}$ to itself.

Let us consider $\hat{u}$, a radially symmetric and decreasing solution to equation \eqref{equ:problem}  in $\mathbb{B}'_{0,T}$ satisfying 
\begin{equation}\label{est:uhat}
\sup_{y \in \RN} \left|\T[\hat{u}](y,s) - \left[f\left(\frac{y}{\sqrt{s}}\right) + \frac{N\kappa}{2ps}\right] \right| \leq \frac{C}{\sqrt{s}},
\end{equation}
where $f$ is defined in \eqref{def:fk} (see Appendix \ref{ap:B} for the justification of the existence of such a solution). The solution $\hat{u}$ and $T$ will be considered as fixed in the following. Then, we have the following classification from \cite{FZnon00}: 

\medskip

\noindent \textit{If $u \in \mathbb{B}_{0,T}$, then, two cases arise:\\
- Case 1: There is a matrix $\mathcal{B} = \mathcal{B}(u, \hat{u}) \in \mathbb{M}_N(\R)$ ($\mathcal{B} \not \equiv 0$) such that
\begin{equation}\label{equ:case1FZ}
\T[u](y,s) - \T[\hat{u}](y,s) = \dfrac{1}{s^2}\left(\frac{1}{2}y^T\mathcal{B}y - tr(\mathcal{B})\right) + o\left(\frac{1}{s^2} \right) \quad \text{in}\;\; L^2_\rho.
\end{equation}
- Case 2: There is a constant $C > 0$, 
\begin{equation}\label{equ:case2FZ}
\|\T[u](s) - \T[\hat{u}](s)\|_{L^2_\rho} \leq \dfrac{Ce^{-s/2}}{s^3},
\end{equation}}
(see Appendix \ref{sec:apClas} for the justification of this result).

\medskip

When $N = 1$ ($\mathcal{B}(u, \hat{u}) \in \mathbb{R}$), the authors in \cite{FZnon00} noted the following property when $\lambda > 0$ and $u = \mathcal{D}_\lambda\hat{u}$ defined in \eqref{equ:DilaInv}:
\begin{equation}\label{equ:estFZuhat}
\T[\mathcal{D}_\lambda \hat{u}](y,s) - \T[\hat{u}](y,s) = \frac{\kappa \log \lambda}{ps^2}\left(\frac{1}{2}|y|^2 - 1 \right) + o\left(\frac{1}{s^2}\right) \quad \text{in}\; L^2_\rho,
\end{equation}
hence $\mathcal{B}(\mathcal{D}_\lambda \hat{u}, \hat{u}) = \frac{\kappa \log \lambda}{p}$ (note that \eqref{equ:estFZuhat} is true wherever $N \geq 2$ with $-1$ replaced by $-N$). This is due to the fact that 
$$
\T[\mathcal{D}_\lambda \hat{u}](s) = \T[\hat{u}](s + 2\log \lambda).
$$
Therefore, choosing $\lambda$ such that $\frac{\kappa \log \lambda}{p} = \mathcal{B}(u,\hat{u})$, that is $\lambda = e^{\frac{p}{\kappa} \mathcal{B}(u, \hat{u})}$, we see from \eqref{equ:case1FZ} and \eqref{equ:estFZuhat} that 
$$ \T[u](y,s) - \T[\mathcal{D}_{\lambda}\hat{u}](y,s) = o\left(\frac{1}{s^2}\right) \quad \text{in}\; L^2_\rho.$$
Hence, only \eqref{equ:case2FZ} holds and 
$$ \|\T[u](s) - \T[\mathcal{D}_{\lambda}\hat{u}](s) \|_{L^2_\rho} \leq \frac{C e^{-s/2}}{s^3}.$$
This implies by \cite{FZnon00} that when $p \geq 3$, 
\begin{equation}\label{equ:resultFZ00}
\left.\begin{array}{lll}
&|u(x,t) - \mathcal{D}_{\lambda}\hat{u}(x,t)| \leq C_0,&\quad \forall |x| \leq \epsilon_0, \quad \forall t\in \big[t_0 ,T \big), \\
&|u(x,t) - \mathcal{D}_{\lambda}\hat{u}(x,t)| \to 0 &\quad \text{as}\;\; (x,t) \to (0,T),
\end{array}\right\}
\end{equation}
where $\epsilon_0 > 0$ and $t_0 \in [0,T)$.

In view of \eqref{equ:resultFZ00}, it appears that the non explicit one-parameter family $\mathcal{D}_\lambda \hat{u}$ serves as a sharp blow-up final profile for any arbitrary $u \in \mathbb{B}_{0,T}$, accurate up to bounded functions. This is to be considered as a refinement of \eqref{equ:c1limit}, since $\mathcal{D}_\lambda \hat{u}$ encapsulates all singular terms in the expansion of $u(x,t)$ near the singularity $(0,T)$. However, there is a price to pay to reach such an accuracy, and the price lays in the fact that $\mathcal{D}_{\lambda}\hat{u}$ is not explicit, unlike $u^*(x)$ in \eqref{equ:c1limit}.
\bigskip

If $N \geq 2$, the matrix $\mathcal{B}(u, \hat{u})$ in \eqref{equ:case1FZ} has $\frac{N(N + 1)}{2}$ real parameters. Applying the dilation trick of \cite{FZnon00} allows to manage only one parameter. Therefore, $\frac{N(N+1)}{2} - 1$ parameters remain to be handled. This is the major reason preventing the authors in \cite{FZnon00} from having such a striking result in higher dimensions. Trying to apply other transformations which keep the equation and $\mathbb{B}_{0,T}$ invariant (rotation, symmetries of space coordinates, ...), we could not handle all the remaining $\frac{N(N+1)}{2} - 1$ parameters. Fortunately, we could overcome this obstacle and construct a $\frac{N(N+1)}{2}$ parameters family, which generalizes the $\mathcal{D}_{\lambda}\hat{u}$  family and serves as the accurate profile for solutions in $\mathbb{B}_{0,T}$. In the following statement, we construct that family:

\begin{theo}[\textbf{Construction of blow-up solutions for equation \eqref{equ:problem} in $\mathbb{B}'_{0,T}$ with a refined behavior}] \label{theo:1} For any $\mathcal{A} \in \mathbb{M}_N(\R)$, there exists $s_0(\mathcal{A}) > 0$ such that equation \eqref{equ:problem} has a unique solution $u_{\mathcal{A}}$ in $\mathbb{B}'_{0,T_{\mathcal{A}}}$ with $T_{\mathcal{A}} = e^{-s_0(\mathcal{A})}$ and the following holds  
\begin{equation}\label{equ:theo1refined}
\T[u_{\mathcal{A}}](y,s) - \T[\hat{u}](y,s) = \dfrac{1}{s^2}\left(\frac{1}{2}y^T\mathcal{A}y - tr(\mathcal{A})\right) + o\left(\frac{1}{s^2} \right)\;\;\text{in}\;\;L^2_\rho\;\; \text{as $s \to +\infty.$}
\end{equation}
\end{theo}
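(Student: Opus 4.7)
The plan is to construct $u_\A$ via the topological shooting method of Bricmont--Kupiainen and Merle--Zaag, now perturbing around the non-explicit profile $\hat u$ rather than around $\kappa$ or the explicit $f$. I would work with the difference $q(y,s) = \T[u](y,s) - \T[\hat u](y,s)$, which solves
\begin{equation*}
\partial_s q = \mL q - \tfrac{p}{p-1} q + p|\T[\hat u]|^{p-1}q + N(q),
\end{equation*}
where $N(q)$ is quadratic in $q$. By \eqref{est:uhat} the potential $p|\T[\hat u]|^{p-1}$ converges to $p\kappa^{p-1} = \frac{p}{p-1}$ as $s\to\infty$, so the limiting linear operator is $\mL$ itself. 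Its $L^2_\rho(\RN)$-spectrum is $\{1 - k/2 : k \in \mathbb{N}\}$, with Hermite-polynomial eigenfunctions of total degree $k$. I would then decompose $q$ into its expanding part supported on $k=0,1$ (total dimension $1+N$), its \emph{neutral} part on $k=2$ (dimension $\frac{N(N+1)}{2}$, exactly matching $\mathbb{M}_N(\R)$), its strictly stable finite-mode part for $3 \leq k \leq K$, and an outer part to be handled pointwise.

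The key observation is that $\frac{1}{s^2}p_\A(y)$, with $p_\A(y) = \frac12 y^T\A y - \mathrm{tr}(\A)$, is the natural leading-order profile on the neutral eigenspace: $p_\A$ lies in the kernel of $\mL$ (its Hermite expansion has vanishing constant part), so neutrality is broken only by the $O(1/s)$ correction $p(|\T[\hat u]|^{p-1}-\kappa^{p-1})$ provided by \eqref{est:uhat}. A formal computation shows that once projected on the $k=2$ eigenspace this correction acts like $-\frac{2}{s}\cdot\mathrm{id}$, so that $\A/s^2$ is a consistent leading behavior, up to $O(\log s/s^3)$. I would therefore introduce a shrinking trapping set $V_\A(s) \subset L^2_\rho$ in which the $k=2$ coordinate of $q(s)$ equals $\A/s^2$ up to $\log s/s^3$, the expanding coordinates are of size $A_0/s^3$ for some constant $A_0$, and the stable and outer components decay at the rates dictated by the semigroup of $\mL$ together with standard parabolic estimates.

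The shooting argument then runs as follows. At a large initial time $s_0 = s_0(\A)$ I would parameterize initial data by $d = (d_0, d_1) \in \R \times \RN$ corresponding to the $1+N$ expanding-mode coefficients, initializing the neutral coordinate exactly at $\A/s_0^2$ and the other components at zero. A priori estimates show that if $q(\cdot;d)$ stays in $V_\A(s)$ on some $[s_0,s_1]$, then at $s_1$ only the expanding-mode bounds can be saturated; a standard index/transversality argument then produces $d^*$ such that the corresponding trajectory remains in $V_\A(s)$ for all $s \geq s_0$, and $s_0$ large enough yields $u_\A$ with blow-up time $T_\A = e^{-s_0(\A)}$. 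Uniqueness in $\mathbb{B}'_{0,T_\A}$ under the behavior \eqref{equ:theo1refined} is read off the dichotomy \eqref{equ:case1FZ}--\eqref{equ:case2FZ}: two candidates sharing the same $k=2$ coordinate $\A/s^2$ differ only by an $L^2_\rho$-error of order $e^{-s/2}/s^3$, which together with the expanding-mode control forces them to coincide.

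The hardest step, I expect, will be the detailed neutral-mode computation. Establishing that the effective dynamics on the $\frac{N(N+1)}{2}$-dimensional $k=2$ eigenspace is genuinely $-\frac{2}{s}\cdot\mathrm{id}$ at leading order, with no off-diagonal or anisotropic drift, seems to require expanding $\T[\hat u]$ beyond \eqref{est:uhat}---presumably to order $1/s^2$ in the inner region---and carefully projecting back the nonlinear contribution together with the tail from the outer part. Any spurious drift on this eigenspace would either destroy the matching with $\A$ or demand an additional modulation, thereby lowering the count of free parameters below the $\frac{N(N+1)}{2}$ announced in the abstract.
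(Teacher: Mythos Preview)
Your overall architecture is right --- linearize around $\T[\hat u]$, decompose along the spectrum of $\mL$, trap in a shrinking set, shoot topologically --- and your identification of the effective neutral dynamics $h' + \frac{2}{s}h = (\text{small})$ is exactly what the paper finds (their Lemma~2.9, equation~\eqref{equ:odev2}). But there is a genuine gap in your shooting step: you propose to parametrize only by $(d_0,d_1)\in\R\times\RN$ and to initialize the neutral coordinate exactly at $\A/s_0^2$, expecting it to stay there. This fails, and the paper explicitly flags it as the main novelty (item~(iii) on page~\pageref{def:BoT} and the discussion in Remark~\ref{rema:coverV_A}).

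The reason is a rate mismatch. The homogeneous solution of $h' + \frac{2}{s}h = 0$ decays like $s^{-2}$, which is \emph{exactly} the order of the target $\A/s^2$. If you set $h(s_0)=0$ and let the forcing $g(s)=O(s^{-3-\eta})$ act, the Duhamel integral gives
\[
h(s)=\frac{1}{s^2}\int_{s_0}^s \tau^2 g(\tau)\,d\tau = O\!\left(\frac{1}{s^2}\right),
\]
not $o(1/s^2)$; the accumulated constant corrupts $\A$. Equivalently, relative to any envelope $s^{-(2+\eta)}$ with $\eta>0$, the null modes are \emph{outgoing}: the paper's transversality check shows that when $h_{ij}(s_1)=\pm A^2/s_1^{2+\eta}$ one has $h_{ij}'(s_1)\cdot\mathrm{sgn}(h_{ij}(s_1)) > \frac{d}{ds}(A^2/s^{2+\eta})|_{s_1}$, so the trajectory exits. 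Hence the null modes must be included in the topological argument, and the shooting runs over $(d_0,d_1,d_2)\in\R\times\RN\times\mathbb{M}_N(\R)$, i.e.\ $N+1+\frac{N(N+1)}{2}$ parameters (Propositions~\ref{prop:equivTheo1} and~\ref{prop:redu}). This is also why the free-parameter count is $\frac{N(N+1)}{2}$: those degrees of freedom are spent hitting a prescribed $\A$, not produced for free by neutrality.

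Two smaller points. First, your trapping rates ($\log s/s^3$ on the neutral part, $s^{-3}$ on the expanding part) are too tight for the available error estimates; the paper uses $s^{-(2+\eta)}$ with $0<\eta<\frac12$, which is exactly what the ODE error $CA/s^{3+\eta}$ supports. Second, the neutral-mode computation you flag as the hardest step is in fact routine once you linearize around $\hat w$ (no rest term appears, unlike when linearizing around $f(y/\sqrt s)$); the paper handles it in a page. The real difficulty is the one you missed: recognizing that the null modes must be shot on.
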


\begin{rema} From \eqref{equ:case1FZ}, we see that Theorem \ref{theo:1} remains true if we change $\hat{u}$ by any other $\tilde{u}$ in $\mathbb{B}'_{0,T}$.
\end{rema}
\begin{rema} The blow-up time $T_\mathcal{A}$ goes to zero when $\|\mathcal{A}\| \to +\infty$. 
\end{rema}

As mentioned earlier, Theorem \ref{theo:1} is a major step in extending \eqref{equ:resultFZ00} to the higher dimensional case. More precisely, we have the following result:
\begin{theo}[\textbf{A finite parameter family as a sharp profile for solutions of  \eqref{equ:problem} having the same profile \eqref{equ:c1prof}}] \label{theo:2} Consider $u \in \mathbb{B}_{0,T}$, then there exist a matrix $\mathcal{A} \in \mathbb{M}_N(\R)$, $\epsilon_0 > 0$ and $t_0 \in [0, T)$ such that \\
$(i) \qquad \qquad \qquad \|\T[u](s) - \T[\bar{u}_{\mathcal{A}}](s)\|_{L^2_\rho}  = \mathcal{O}\left(\dfrac{e^{-s/2}}{s^3}\right) \quad \text{as}\;\; s \to +\infty,$\\
where $\bar{u}_{\mathcal{A}}(x,t) = u_{\mathcal{A}}(x,t + T_\mathcal{A} - T)$ and $u_{\mathcal{A}} \in \mathbb{B}'_{0,T_{\mathcal{A}}}$ is the solution to equation \eqref{equ:problem} constructed in Theorem \ref{theo:1}. The convergence also holds in $L^\infty_{loc}$.\\

\noindent$(ii)$ For all $|x| \leq \epsilon_0$ and for all $t \in [t_0,T)$,
\begin{equation}\label{est:theo2p13}
|u(x,t) - \bar{u}_{\mathcal{A}}(x,t)\big| \leq C mM \left\{\frac{(T - t)^{\frac{1}{2} - \frac{1}{p-1}}}{|\log (T-t)|^\frac{3}{2}}, \frac{|x|^{1 - \frac{2}{p-1}}}{|\log |x||^{2 - \frac{1}{p-1}}} \right\},
\end{equation} 
where $mM = \min$ if $1 < p < 3$ and $mM = \max$ if $p \geq 3$.
\end{theo}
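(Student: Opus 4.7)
The plan for part (i) is to use the finite-parameter family constructed in Theorem~\ref{theo:1} to cancel the polynomial-in-$1/s$ part of $\T[u] - \T[\hat{u}]$, forcing the remainder into the exponentially small regime of Case~2 of the classification recalled in \eqref{equ:case1FZ}--\eqref{equ:case2FZ}. I first apply that classification to the pair $(u, \hat u)$: set $\mathcal{A} = \mathcal{B}(u,\hat u) \in \mathbb{M}_N(\R)$ if Case~1 occurs, and $\mathcal{A} = 0$ if Case~2 occurs. I then invoke Theorem~\ref{theo:1} with this $\mathcal{A}$ to obtain $u_{\mathcal{A}} \in \mathbb{B}'_{0, T_{\mathcal{A}}}$, and define $\bar u_{\mathcal{A}}(x,t) = u_{\mathcal{A}}(x, t + T_{\mathcal{A}} - T)$, which by the autonomous character of \eqref{equ:problem} is again a solution, with blow-up time $T$ at $x=0$.

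A direct substitution in \eqref{def:simivars} shows $\T[\bar u_{\mathcal{A}}](y,s) = \T[u_{\mathcal{A}}](y,s)$ with the same similarity variable $s = -\log(T-t)$, so the expansion \eqref{equ:theo1refined} transfers verbatim to $\bar u_{\mathcal{A}}$. Subtracting that from the expansion for $u$ (both measured against $\hat u$), the matrix term cancels exactly and I obtain
\begin{equation*}
\T[u](y,s) - \T[\bar u_{\mathcal{A}}](y,s) = o\!\left(\frac{1}{s^2}\right) \quad \text{in } L^2_\rho.
\end{equation*}
I then re-apply the classification, this time to $(u, \bar u_{\mathcal{A}}) \subset \mathbb{B}_{0,T}$. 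Case~1 would force a \emph{non-zero} matrix at the scale $1/s^2$, contradicting the $o(1/s^2)$ bound just derived; hence Case~2 must hold, which is precisely the $L^2_\rho$ bound asserted in part~(i). The $L^\infty_{\mathrm{loc}}$ convergence follows by standard interior parabolic regularity applied to the linear equation solved by $w := \T[u] - \T[\bar u_{\mathcal{A}}]$ on any compact set in $y$.

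For part (ii) I pass from the $L^2_\rho$ estimate to a pointwise bound in the physical variables via a standard two-zone decomposition: an \emph{inner zone} $|x| \le K\sqrt{(T-t)|\log(T-t)|}$, where similarity variables are natural, and an \emph{outer zone} controlled by the final profile \eqref{equ:c1limit}. In the inner zone, parabolic smoothing applied to $w$, combined with the Gaussian weight of $L^2_\rho$, upgrades the $O(e^{-s/2}/s^3)$ bound to a pointwise bound on the same scale throughout $|y| \lesssim \sqrt{s}$; unwinding the similarity transformation \eqref{def:simivars} via the factors $(T-t)^{1/(p-1)}$ produces the first term in \eqref{est:theo2p13}. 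In the outer zone, since $u$ and $\bar u_{\mathcal{A}}$ share the same limit profile \eqref{equ:c1limit} at $x=0$ with matching rates, comparing both to $u^*$ and using heat-kernel regularity away from the singular point produces the second term. The $\min$/$\max$ dichotomy according to $p<3$ or $p\ge 3$ then reflects whether both quantities diverge or vanish as $(x,t) \to (0,T)$: the smaller bound must be kept in the divergent regime, the larger one in the vanishing regime.

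The main obstacle lies in part (ii), specifically in the intermediate matching zone $|x| \sim \sqrt{(T-t)|\log(T-t)|}$: ensuring that the inner-zone pointwise bound meshes with the outer-zone bound without degrading the sharp logarithmic exponents $3/2$ and $2-1/(p-1)$ requires a careful, uniform-in-$s$ boot-strap of regularity on the linear equation satisfied by $w$, together with explicit tracking of the log corrections already built into the profile $f$ of \eqref{def:fk}.
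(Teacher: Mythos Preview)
Your argument for part~(i) is correct and essentially identical to the paper's: apply the classification \eqref{equ:case1FZ}--\eqref{equ:case2FZ} to the pair $(u,\hat u)$ to extract $\mathcal{A}$, invoke Theorem~\ref{theo:1}, subtract, then re-apply the classification to $(u,\bar u_{\mathcal{A}})$ to force Case~2. The $L^\infty_{\mathrm{loc}}$ upgrade via parabolic regularity is also how the paper proceeds.

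For part~(ii) your two-zone architecture is right, but the mechanisms you propose in each zone are not the ones that actually deliver the stated bounds, and as written they would not close.

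In the inner zone, ``parabolic smoothing combined with the Gaussian weight'' only yields pointwise control on \emph{fixed} compact sets $|y|\le R$, not on the growing region $|y|\le K\sqrt{s}$. The Gaussian weight $\rho$ is exponentially small there, so an $L^2_\rho$ bound carries essentially no information at $|y|\sim\sqrt{s}$. The paper bridges this gap by exploiting the drift term $-\tfrac{y}{2}\cdot\nabla$ in $\mL$: one writes $g=\T[u]-\T[\bar u_{\mathcal{A}}]$, checks that the effective potential in the equation for $g$ is bounded above by $C/s$, and then applies a Vel\'azquez-type propagation lemma (Lemma~\ref{lemm:extencon} / Proposition~\ref{prop:extcon}) which converts an $L^2_\rho$ bound at time $s'$ into an $L^\infty$ bound on $|y|\le \tfrac{K}{4}\sqrt{s}$ at a later time $s$ with $e^{(s-s')/2}=K\sqrt{s}$. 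This costs a factor $e^{s-s'}\sim s$, which is exactly why the exponent drops from $s^{-3}$ to $s^{-3/2}$ in Proposition~\ref{prop:extcon}; your proposal claims the bound survives ``on the same scale'', which it does not.

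In the outer zone, comparing both solutions to the final profile $u^*$ is too crude to recover the precise power $|x|^{1-2/(p-1)}|\log|x||^{-(2-1/(p-1))}$. The paper instead uses a \emph{uniform ODE comparison} (Proposition~\ref{prop:estIterr}): for $|x|\ge K\sqrt{(T-t)|\log(T-t)|}$ the diffusion is negligible and each solution is well approximated by the flow of $u'=u^p$ started from the value on the inner boundary $|x|=K\sqrt{(T-\tilde t)|\log(T-\tilde t)|}$, where the inner-zone estimate supplies sharp initial data. Propagating the difference through this ODE, with $\tilde t=\tilde t(|x|)$ defined by that relation, is what produces the second term in \eqref{est:theo2p13} with the correct logarithmic exponent.

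Once both zones are handled this way, the $\min/\max$ dichotomy is immediate from the signs of the exponents $\tfrac12-\tfrac{1}{p-1}$ and $1-\tfrac{2}{p-1}$, not from a separate matching argument.
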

With this theorem, we see that if $p \geq 3$, then the difference $u - u_{\mathcal{A}}$ is bounded and goes to zero as $t \to T$, up to a good choice of $\mathcal{A}$ in $\mathbb{M}_N(\R)$, although both functions blow up. Therefore, Theorem \ref{theo:2} directly yields the following corollary:
\begin{coro}[\textbf{The sharp profile encapsulates all singular terms if $p \geq 3$}] \label{coro:3} Assume in addition to Theorem \ref{theo:2} that $p \geq 3$. Then 
\begin{equation}\label{est:corol7}
 \big |u(x,t) - u_{\mathcal{A}}(x,t + T_{\mathcal{A}} - T)\big| \leq C_0, \quad \forall |x| \leq \epsilon_0, \; \forall t \in [t_0,T),
\end{equation}
and 
$$\big |u(x,t) - u_{\mathcal{A}}(x,t + T_{\mathcal{A}} - T)\big| \to 0 \quad \text{as} \quad (x,t) \to (0,T).$$
\end{coro}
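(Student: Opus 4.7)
The plan is to deduce the corollary directly from the estimate \eqref{est:theo2p13} in Theorem \ref{theo:2}(ii), which is the only nontrivial ingredient. Since we assume $p \geq 3$, we have $mM = \max$ in \eqref{est:theo2p13}, and the two exponents entering the bound,
\begin{equation*}
\alpha := \tfrac{1}{2} - \tfrac{1}{p-1} \geq 0, \qquad \beta := 1 - \tfrac{2}{p-1} \geq 0,
\end{equation*}
are both nonnegative (and strictly positive if $p > 3$). This is the crucial algebraic observation that makes the corollary work: whereas in the range $1 < p < 3$ the bound \eqref{est:theo2p13} may blow up like a negative power of $T-t$ or of $|x|$ (even after taking the minimum with the other term), for $p \geq 3$ each of the two candidate bounds is, up to logarithmic corrections, bounded on any region of the form $|x| \leq \epsilon_0$, $t \in [t_0, T)$.

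Next I would verify the two assertions separately. For the uniform bound \eqref{est:corol7}, first note that the quantity $(T-t)^\alpha / |\log(T-t)|^{3/2}$ is continuous on $[t_0, T)$ and tends to $0$ as $t \to T$ (even when $\alpha = 0$, the logarithmic denominator forces decay), hence it is bounded by some constant $C_1$ on $[t_0, T)$. Similarly, $|x|^\beta / |\log|x||^{2 - 1/(p-1)}$ is bounded by some $C_2$ on $0 < |x| \leq \epsilon_0$, with a removable limit $0$ at $x = 0$. Taking the maximum of these two bounded quantities still yields a bounded function, so \eqref{est:theo2p13} with $mM = \max$ yields $|u(x,t) - \bar u_{\mathcal A}(x,t)| \leq C_0 := C \max(C_1, C_2)$, which is exactly \eqref{est:corol7} after substituting the definition $\bar u_{\mathcal A}(x,t) = u_{\mathcal A}(x, t + T_{\mathcal A} - T)$.

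For the convergence to zero as $(x,t) \to (0,T)$, I would simply observe that both terms inside the $\max$ in \eqref{est:theo2p13} individually tend to $0$: the first because $(T-t)^\alpha \to 0$ (or the log factor kills it when $\alpha = 0$) as $t \to T$, and the second because $|x|^\beta \to 0$ (or its log factor kills it when $\beta = 0$) as $x \to 0$. Since the maximum of two quantities each tending to zero also tends to zero, we conclude $|u(x,t) - u_{\mathcal A}(x, t + T_{\mathcal A} - T)| \to 0$ as $(x,t) \to (0,T)$.

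There is no substantive obstacle here: the whole content of the corollary is the sign of the exponents $\alpha$ and $\beta$ under the hypothesis $p \geq 3$, together with the logarithmic correction that handles the borderline case $p = 3$. The real work, which is the derivation of the refined bound \eqref{est:theo2p13}, is already contained in Theorem \ref{theo:2}.
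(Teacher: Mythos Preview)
Your proposal is correct and follows exactly the approach intended by the paper, which simply states that Corollary \ref{coro:3} ``directly follows from Theorem \ref{theo:2}'' without spelling out the details. Your observation that $p\geq 3$ makes both exponents $\tfrac12-\tfrac{1}{p-1}$ and $1-\tfrac{2}{p-1}$ nonnegative, with the logarithmic factors handling the borderline $p=3$, is precisely the mechanism, and your verification of both boundedness and convergence to zero is complete.
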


\begin{rema} If we denote by $\hat{\mathbb{B}}$ the set of solutions constructed in Theorem \ref{theo:1}, namely
\begin{equation}\label{def:BoT}
\hat{\mathbb{B}} = \hat{\mathbb{B}}(\hat{u}) \triangleq \{u_{\mathcal{A}} \in \mathbb{B}'_{0,T_{\mathcal{A}}} \; \text{constructed in Theorem \ref{theo:1} satisfying \eqref{equ:theo1refined}} \big \vert  \mathcal{A} \in \mathbb{M}_{N}(\R)\},
\end{equation}
and define from Corollary \ref{coro:3} the following equivalence relation $\thicksim$ on $\mathbb{B}_{0,T}$ for $p\geq3$: 
\begin{equation*}
\forall u,v \in \mathbb{B}_{0,T}, \quad u \thicksim v \Longleftrightarrow \exists \epsilon_0 > 0,\;\; (u - v) \in L^\infty\big(B(0,\epsilon_0) \times [T - \epsilon_0, T)\big),
\end{equation*}
then
$$u\diagup_{\thicksim}: \; \hat{\mathbb{B}} \thicksim \mathbb{M}_N(\R),$$
and 
$$\mathbb{B}_{0,T} \thicksim \mathbb{M}_N(\R) \times L^\infty_{x,t}.$$
This says that if we consider the blow-up asymptotic behavior given by \eqref{equ:c1clas} with $\ell = N$ or \eqref{equ:c1prof} or \eqref{equ:c1limit} as a first order expansion describing the behavior of $u(x,t)$ near the singular point $(0,T)$, then the following orders have $\frac{N(N+1)}{2}$ degrees of freedom which is the dimension of the set $\mathbb{M}_N(\R)$, up to bounded functions.
\end{rema}

\begin{rema} If $u$ blows up at time $T$ at some point $a \ne 0$ with the profile \eqref{equ:c1clas}, then $u(x - a,t) \in \mathbb{B}_{0,T}$. Thus, from Theorem \ref{theo:2} and Corollary \ref{coro:3}, we have a sharp profile for $u(x-a, t)$, hence for $u$. Note also that if $u \in \mathbb{B}'_{0,T}$, then estimates \eqref{est:theo2p13} and \eqref{est:corol7} hold for all $x \in \mathbb{R}^N$.

Not that Theorem \ref{theo:2} and Corollary \ref{coro:3} were already proved in one dimension by Fermanian and Zaag \cite{FZnon00}. Thus, the novelty of our contribution lays in the higher dimensional case.
\end{rema}

As in \cite{FZnon00}, we believe that our result is a forward step in the problem of the regularity of the blow-up set, which has been poorly studied in the literature and is challenging. In particular, Zaag in \cite{ZAAdm06} (see also \cite{ZAAaihp02} and \cite{ZAAcmp02}) used the ideas given in \cite{FZnon00} and proved that under a non-degeneracy condition, the blow-up set is a $\mathcal{C}^2$ manifold if it is continuous and its dimensional Hausdorff measure is equal to $N - 1$. He also derived the first description of the blow-up profile of solutions to \eqref{equ:problem} near a non-isolated blow-up point.\\

Let us now briefly give  the main ideas of the proof of Theorem \ref{theo:1}. The proof is based on techniques developed by Bricmont and Kupiainen in \cite{BKnon94}, Merle and Zaag in \cite{MZdm97} for the construction of a solution to equation \eqref{equ:problem} in $\mathbb{B}'_{0,T}$, that is, prescribing only the behavior \eqref{equ:c1prof}. Because we need in addition the estimate \eqref{equ:theo1refined} (note that this estimate is the crucial point in order to obtain Theorem \ref{theo:2}), we need new ideas. Instead of linearizing equation \eqref{equ:w} around $f\left(\frac{y}{\sqrt{s}}\right)$ defined in \eqref{def:fk} as in \cite{BKnon94} and \cite{MZdm97}, our major idea is to linearize equation \eqref{equ:w} around $\T[\hat{u}]$, where $\hat{u}$ is the given radially symmetric solution to equation \eqref{equ:problem} in $\mathbb{B}'_{0,T}$. Although this choice may seem less interesting, given that $\T[\hat{u}]$ is not explicit, unlike $f\left(\frac{y}{\sqrt{s}}\right)$, it is in fact much more advantageous, since linearizing around $\T[\hat{u}]$ generates no rest term, unlike with $f\left(\frac{y}{\sqrt{s}}\right)$. This way, we are able to reach the order $\frac{1}{s^2}$ in the expansion of solutions to equation \eqref{equ:w} (as expected in \eqref{equ:theo1refined}), unlike with $f\left(\frac{y}{\sqrt{s}}\right)$, where we are stuck in the $\frac{\log s}{s^2}$ order. Let us first review the method of \cite{BKnon94} and \cite{MZdm97} for the construction of a solution in $\mathbb{B}'_{0,T}$. In those papers, the proof is performed in the framework of similarity variables defined in \eqref{def:simivars}. In that setting, the problem reduces to the construction of a solution $w$ to \eqref{equ:w} such that 
$$v(y,s) = w(y,s) - f\left(\frac{y}{\sqrt{s}}\right) \to 0 \quad \text{as} \quad s \to +\infty.$$
Satisfying such a property is guaranteed by the condition that $v(s)$ belongs to some set $V_A(s) \subset L^\infty(\RN)$ which shrinks to 0 as $s \to +\infty$.  Since the linearization of equation \eqref{equ:w} around $f\left(  \frac{y}{\sqrt{s}}\right)$ gives $(N + 1)$ positive modes, $\frac{N(N+1)}{2}$ zero modes, then an infinite dimensional negative part, the method relies on two arguments:
\begin{itemize}
\item[-] The use of the bounding effect of the heat kernel to reduce the problem of the control of $v$ in $V_A$ to the control of its positive modes.
\item[-] The control of the $(N+1)$ positives modes thanks to a topological argument based on index theory.
\end{itemize}
Because the arguments of \cite{BKnon94} and \cite{MZdm97} allow the construction of solutions in $\mathbb{B}'_{0,T}$ for equation \eqref{equ:problem} without caring about estimate \eqref{equ:theo1refined}, therefore, we need some crucial modifications of the arguments of  \cite{MZdm97} in order to achieve additionally the estimate \eqref{equ:theo1refined} as well.  Although these modifications do not affect the general framework developed in \cite{MZdm97}, they lay in 3 crucial places:
\begin{itemize}
\item[i.] We do no longer linearize equation \eqref{equ:w} around the profile $f\left(\frac{y}{\sqrt{s}}\right)$ defined in \eqref{def:fk} as in \cite{BKnon94} and \cite{MZdm97}. we instead replace this explicit profile by an implicit one, say $\T[\hat{u}]$, where $\hat{u}$ is the radial solution to equation \eqref{equ:problem} in $\mathbb{B}_{0,T}'$. This way, we go beyond the $\frac{\log s}{s^2}$ order in the expansion of the solution and achieve the expected estimate \eqref{equ:theo1refined}. 
\item[ii.] The change of the definition of the shrinking set $V_A$ in a very delicate way, so that $v(s) \in V_A(s)$ implies $u \in \mathbb{B}'_{0,T}$ with estimate \eqref{equ:theo1refined} satisfied. With this change, we need to choose less explicit initial data $u_0$ so that the corresponding initial data of $v$, say $v(s_0)$, belongs to $V_A(s_0)$, unlike with \cite{MZdm97} where the initial data is given explicitly. See Section \ref{sec:defVAint}, particularly see Definition \ref{def:V_A} and Lemma \ref{lemm:decomdata}.
\item[iii.] In \cite{BKnon94} and \cite{MZdm97}, the $\frac{N(N+1)}{2}$ zero modes turned to be controllable like the negative modes, and this was made possible thanks to the effect of the linear potential term $\alpha v$ in \eqref{equ:v}. Here, because of the change of the definition of the shrinking set $V_A$ to satisfy \eqref{equ:theo1refined} as well, the $\frac{N(N+1)}{2}$ zero modes become in some sense "positive". This way, the topological argument concerns in all $N + 1 + \frac{N(N+1)}{2}$ terms.
\end{itemize}

We would like to mention that Masmoudi and Zaag \cite{MZjfa08} adapted the method of \cite{MZdm97} for the following Ginzburg-Landau equation:
\begin{equation}\label{equ:comGin}
\partial_t u = (1 + \imath \beta)\Delta u + (1 + \imath \delta)|u|^{p-1}u,
\end{equation}
where $p - \delta^2 - \beta\delta (p+1) > 0$ and $u: \mathbb{R}^N\times[0,T) \to \mathbb{C}$. Note that the case $\beta = 0$ and $\delta \in \mathbb{R}$ small has been  studied earlier by Zaag \cite{ZAAihn98}.  The same technique is successfully used by Nouaili and Zaag \cite{NZcpde15} for the following non-variational complex-valued semilinear heat equation:
$$\partial_t u = \Delta u + u^2,$$
where $u: \RN \times [0,T) \to \mathbb{C}$. In \cite{EZsema11}, Ebde and Zaag use these ideas to show the persistence of the profile \eqref{def:fk} under weak perturbations of equation \eqref{equ:problem} by lower order terms involving $u$ and $\nabla u$ (see also Nguyen and Zaag \cite{NZasnsp15} for the case of the strong perturbations). This kind of topological arguments has proved to be successful in various situations including hyperbolic and parabolic equations, in particular with energy-critical exponents. This was the case for the heat equation with exponential source by Bressan \cite{Breiumj90, Brejde92}, for the construction of multi-solitons for the semilinear wave equation in one space dimension by C\^ote and Zaag \cite{CZcpam13}, the wave maps by Rapha\"el and Rodnianski \cite{RRmihes12}, the Schr\"odinger maps by Merle, Rapha\"el and Rodnianski \cite{MRRmasp11}, the critical harmonic heat flow by Schweyer \cite{Schfa12} and the two-dimensional Keller-Segel equation by Rapha\"el and Schweyer \cite{RScpam13}.\\

As mentioned earlier, Theorem \ref{theo:1} is the major step in deriving Theorem \ref{theo:2} which actually extends \eqref{equ:resultFZ00} to the higher dimensional case. Let us briefly give the main steps of the proof of Theorem \ref{theo:2}. Consider $u$ in $\mathbb{B}_{0,T}$. Our goal is to choose a particular matrix $\mathcal{A} \in \mathbb{M}_N(\R)$ such that the difference $(T-t)^\frac{1}{p-1}\left[u(x,t) - u_{\mathcal{A}}(x,t + T_{\mathcal{A}} - T)\right]$, where $u_{\mathcal{A}} \in \mathbb{B}'_{0,T_{\mathcal{A}}}$ is the solution constructed in Theorem \ref{theo:1}, goes to zero in the scale of $(T-t)^\beta$ for some $\beta > 0$. In order to obtain this estimate, we follow the idea of \cite{FZnon00} in the one dimensional case and proceed in three steps:\\
- In the first step, we apply Theorem \ref{theo:2} with the matrix $\mathcal{A} = \mathcal{B}(u, \hat{u})$ given in the result of \cite{FZnon00} recalled in \eqref{equ:case1FZ}, hence, deriving the existence of $u_{\mathcal{A}}$ satisfying \eqref{equ:theo1refined}, we see that $\|\T[u](s) - \T[u_{\mathcal{A}}](s)\|_{L^2_\rho}$ goes to zero exponentially, and also in $L^\infty(|y| \leq R)$ for any $R > 0$ by  parabolic regularity.\\
- In the second step, we extend the estimate in compact sets to the larger sets $|y| \leq K\sqrt{s}$ by estimating the effect of the convective term $-\frac{y}{2}\cdot \nabla$ in the definition \eqref{def:opL} of $\mathcal{L}$ in $L^q_\rho$ spaces with $q > 1$. \\
- In the last step, we use a uniform ODE comparison result for equation \eqref{equ:problem} to estimate the difference $u(x,t) - u_{\mathcal{A}}(x,t + T_{\mathcal{A}} - T)$ in the outer region where $\epsilon_0 \geq |x| \geq K\sqrt{(T-t)|\log(T-t)|}$ for some $\epsilon_0 > 0$, and then get the conclusion.\\

\bigskip

\noindent We give the proof of Theorem \ref{theo:1} in Section \ref{sec:2}. The proof of Theorem \ref{theo:2} and Corollary \ref{coro:3} are given in Section \ref{sec:3}.

\section{Construction of blow-up solutions for \eqref{equ:problem} satisfying a prescribed behavior.}\label{sec:2}
This section is devoted to the proof of Theorem \ref{theo:1}. Consider $\hat{u} \in \mathbb{B}'_{0,T}$ the given radially symmetric
solution to equation \eqref{equ:problem} satisfying \eqref{est:uhat} and $\mathcal{A} \in \mathbb{M}_N(\R)$, we aim at constructing a solution $u_{\mathcal{A}}$ for equation \eqref{equ:problem} such that 
\begin{equation}\label{est:consMZ97}
\sup_{\xi \in \RN}\left|(T_{\mathcal{A}}-t)^\frac{1}{p-1}u_{\mathcal{A}}(\xi\sqrt{|\log (T_{\mathcal{A}}-t)|(T_{\mathcal{A}}-t)}, t) - f \left(\xi\right) \right| \leq \frac{C}{\sqrt{|\log (T_{\mathcal{A}}-t)|}}.
\end{equation}
where $f$ is defined in \eqref{def:fk}, and in the self-similar transformation \eqref{def:simivars}, it holds that 
\begin{equation}\label{est:crunew}
\T[u_{\mathcal{A}}](y,s) -\T[\hat{u}](y,s) = \frac{1}{s^2}\left(\frac{1}{2}y^T\mathcal{A}y - tr(\mathcal{A})\right) + o\left(\frac{1}{s^{2}}\right) \quad \text{in}\; L^2_\rho.
\end{equation}
If $\mathcal{A} = 0$, then, we simply take $u_{\mathcal{A}} = \hat{u}$ which already satisfies \eqref{est:consMZ97} as we explain in Appendix \ref{ap:B}. Therefore, we only consider here the case where 
\begin{equation}\label{equ:conMathA}
\mathcal{A} \ne 0.
\end{equation}

If $\hat{w} = \T[\hat{u}]$, in the similarity variables framework \eqref{def:simivars}, we reduce to finding $s_0 = s_0(\mathcal{A}) \in \mathbb{R}$ and $w_{\mathcal{A},0}(y)$ such that the solution  $w_{\mathcal{A}}(y,s)$ to equation \eqref{equ:w} with the initial datum $w_{\mathcal{A},0}$ exists for all $s \geq s_0$ and 
\begin{equation}\label{est:theo3_1}
\sup_{y \in \RN} \left|w_{\mathcal{A}}(y,s) - \hat{w}(y,s)\right| \to 0 \quad \text{as}\;\; s \to +\infty,
\end{equation}
with
\begin{equation}\label{est:theo3_2}
w_{\mathcal{A}}(y,s)- \hat{w}(y,s) = \frac{1}{s^2}\left(\frac{1}{2}y^T\mathcal{A}y - tr(\mathcal{A})\right) + o\left(\frac{1}{s^{2}}\right) \quad \text{in}\; L^2_\rho.
\end{equation}

Here, we follow the framework proposed in \cite{MZdm97} for the proof of weaker version of Theorem \ref{theo:1}, where only estimate \eqref{est:consMZ97} is needed, in particular estimate \eqref{est:theo3_2} is not considered and no solution $\hat{u}$ nor matrix $\mathcal{A}$ are needed. As in \cite{MZdm97}, the proof relies on the understanding of the dynamics of the self-similar version of equation \eqref{equ:w} around the function $\hat{w}$ with some refinement for the dynamics on the null mode to take care of \eqref{est:theo3_2}. This is indeed one of the major novelties in our work. More precisely, the proof is divided into 2 steps:
\begin{itemize}
\item[-] Thanks to a dynamical system formulation, we show that the control of the similarity variable version $w_{\mathcal{A}}(y,s)$ \eqref{def:simivars} around the sharper profile $\hat{w}$ given in \eqref{est:theo3_1} and \eqref{est:theo3_2} reduces to the control of the $N + 1$ positive modes and the $\frac{N(N+1)}{2}$ zero modes.
\item[-] Then, we solve the finite dimensional problem thanks to a topological argument based on index theory.
\end{itemize}

\noindent For the reader's convenience, we organize the proof in 4 subsections:\\
- In the first subsection, we formulate the constructive problem.\\
- In the second subsection, we give the definition of the shrinking set $V_A$ and the preparation of initial data for the problem.\\
- In the third subsection, we give all the arguments of the proof without the details, which are left for the following subsection.\\
- In the fourth subsection, we give the proof of an important proposition  which gives the reduction of the problem to a finite dimensional one.

\subsection{Formulation of the constructive problem.}
Consider $s_0 > 0$ to be fixed large enough later. Let us introduce the change of function
\begin{equation}\label{intr:v}
v_{\mathcal{A}}(y,s) = w_{\mathcal{A}}(y,s) - \hat{w}(y,s),
\end{equation}
where $\hat{w} = \T[\hat{u}]$ is the solution of \eqref{equ:w} which satisfies \eqref{est:uhat} and $\hat{u}$ is the considered radially decreasing solution of \eqref{equ:problem}. Then, from \eqref{equ:w}, $v_{\mathcal{A}}$ (or $v$ for simplicity) solves the following equation: for all $(y,s) \in \mathbb{R}^N \times [s_0, +\infty)$,
\begin{equation}\label{equ:v}
v_s = (\mathcal{L} + \gamma(y,s))v + B(v) = (\mathcal{L} + \alpha(y,s))v + B(v) + (\gamma(y,s) - \alpha(y,s))v, 
\end{equation}
where $\mathcal{L}$ is given in \eqref{def:opL} and
\begin{align}
\gamma(y,s) &= p \left(|\hat{w}(y,s)|^{p-1} - \kappa^{p-1}\right),\label{def:gamma}\\
B(v) &= |\hat{w} + v|^{p-1}(\hat{w} + v) - |\hat{w}|^{p-1}\hat{w} - p|\hat{w}|^{p-1}v,\label{def:B}\\
\alpha(y,s) &= p \left(|\varphi(y,s)|^{p-1} - \kappa^{p-1}\right), \quad \text{where}\;\; \varphi(y,s) = f\left(\frac{y}{\sqrt{s}}\right) + \frac{N\kappa}{2ps}.\label{def:alpha}
\end{align}
As mentioned earlier, we do linearize equation \eqref{equ:w} around $\hat{w}$, instead of the profile $f\left(\frac{y}{\sqrt{s}}\right)$. Doing $v$ generates no rest term in equation \eqref{equ:v} and this is one of the major ideas in this work. Looking at the second version of equation \eqref{equ:v}, the reader may ask why we use the function $\alpha(y,s)$ instead of $\gamma(y,s)$ as the potential. In fact, the use of the potential $\alpha$ is convenient for the two following reasons:
\begin{itemize}
\item[i.] We want to use the same dynamical system formulation given in \cite{BKnon94} and \cite{MZdm97}, and that analysis was already based on the understanding of the linear operator $\mL + \alpha$ and its related Duhamel formulation, together with some related  \textit{a priori} estimates that were already obtained (see Lemma \ref{lemm:BKespri} below for these estimates). 
\item[ii.] In view of \eqref{est:uhat}, we see from the definitions of $\alpha$ and $\gamma$ that they are almost the same in the sense that $\|\alpha(s) - \gamma(s)\|_{L^\infty} \to 0$ as $s \to +\infty$. Therefore, the term $(\gamma - \alpha)v$ in \eqref{equ:v} is easily controlled (see Lemma \ref{lemm:apestR} below). 
\end{itemize}

Satisfying \eqref{est:theo3_1} reduces to the construction of a function $v$ such that 
\begin{equation}\label{conv:vtozero}
\|v(s)\|_{L^{\infty}(\mathbb{R}^N)} \to 0 \quad \text{as} \; s\to+\infty.
\end{equation}
In fact, we will be more specific and require $v$ to satisfy some geometrical property, namely that $v$ belongs to some set $V_A \subset L^\infty(\RN)$ where $V_A(s)$ shrinks to $v \equiv 0$ as $s \to +\infty$. This set is very similar to that of \cite{MZdm97}, except for the control of the null modes, where we modify the definition of \cite{MZdm97} in a crucial way to handle the requirement given in \eqref{est:theo3_2}. In fact, our new definition covers the one of \cite{MZdm97}. Again, we insist on the fact that this is our second main contribution and novelty in this work, with respect to \cite{MZdm97} (see Definition \ref{def:V_A} for more clarity, especially condition \eqref{equ:conVAnullmode} below).\\

Our analysis uses the Duhamel formulation of equation \eqref{equ:v}: for each $s \geq \sigma \geq s_0$, we have
\begin{equation}\label{for:vint}
v(s) = \mathcal{K}(s,\sigma)v(\sigma) + \int_\sigma^s \mathcal{K}(s, \tau)\left[B(v(\tau)) + (\gamma(\tau) - \alpha(\tau))v(\tau)\right] d\tau,
\end{equation}
where $\mathcal{K}$ is the fundamental solution of the linear operator $\mathcal{L} + \alpha$ defined for each $\sigma > 0$ and $s \geq \sigma$ by
\begin{equation} \label{def:kernel}
\partial_s\mathcal{K}(s,\sigma) = (\mathcal{L} + \alpha)\mathcal{K}(s,\sigma), \quad \mathcal{K}(\sigma, \sigma) = Identity.
\end{equation}

\noindent The linear operator $\mathcal{L}$ is self-adjoint in $L^2_\rho(\mathbb{R}^N)$, where $L^2_\rho$ is the weighted $L^2$ space associated with the weight $\rho$ defined by 
\begin{equation}\label{def:rho}
\rho(y) = \prod_{i = 1}^N\rho_1(y_i) \quad \text{with} \quad \rho_1(\xi) =\frac{1}{\sqrt{4\pi}}e^{-\frac{|\xi|^2}{4}},
\end{equation}
and 
$$spec(\mathcal{L}) = \{1 - \frac{n}{2},\; n \in \mathbb{N}\}.$$
For $\beta = (\beta_1, \cdots, \beta_N) \in \mathbb{N}^N$, the eigenfunctions corresponding to $1 - \frac{|\beta|}{2}$ ($|\beta| = \beta_1 + \cdots + \beta_N$) are
\begin{equation}\label{def:Phim}
\phi_\beta(y) = \phi_{\beta_1}(y_1)\cdots\phi_{\beta_N}(y_N), 
\end{equation}
where 
\begin{equation}\label{def:phik}
\phi_k(\xi) = \sum_{i = 0}^{\left[\frac{k}{2} \right]} \frac{k!}{i!(k - 2i)!}(-1)^i\xi^{k - 2i}, \quad k \in \mathbb{N},
\end{equation}
satisfy
\begin{equation}\label{equ:orthre}
\int_{\mathbb{R}} \phi_k(\xi) \phi_n(\xi) \rho_1(\xi) d\xi = 2^k k!\delta_{k,n}.
\end{equation}

\noindent Note from Lemma \ref{lemm:apEstalp} that the potential $\alpha(y,s)$ has two fundamental properties: \textit{
\begin{itemize}
\item[i)] $\alpha(\cdot, s) \to 0$ in $L^2_{\rho}$ as $s \to +\infty$. In particular, the effect of $\alpha$ on the bounded sets or in the "blow-up" region ($|y| \leq K\sqrt{s}$) is regarded as a perturbation of the effect of $\mathcal{L}$.
\item[ii)] outside of the "blow-up" region, we have the following property: for all $\epsilon > 0$, there exist $C_\epsilon > 0$ and $s_\epsilon$ such that 
\begin{equation}\label{equ:asymalp}
\sup_{s \geq s_\epsilon, |y| \geq C_\epsilon \sqrt{s}} \left|\alpha(y,s) - \left(-\frac{p}{p-1}\right)\right| \leq \epsilon.
\end{equation}
\end{itemize}}

\noindent This means that $\mathcal{L} + \alpha$ behaves like $\mathcal{L} - \frac{p}{p-1}$ in the region $|y| \geq K\sqrt{s}$. Because $1$ is the largest eigenvalue of $\mathcal{L}$, the operator $\mathcal{L} - \frac{p}{p-1}$ has a purely negative spectrum. Therefore, the control of $v(y,s)$ in $L^\infty$ outside of the "blow-up" region will be done without difficulties. \\

Since the behavior of $\alpha$ inside and outside of the "blow-up" region is different, let us decompose $v$ as follows: Let $\chi_0 \in \mathcal{C}_0^\infty([0,+\infty))$ with $\text{supp}(\chi_0) \subset [0,2]$ and $\chi_0 \equiv 1$ on $[0,1]$. We define 
\begin{equation}\label{def:chi}
\chi(y,s)= \chi_0\left(\frac{|y|}{K\sqrt{s}}\right),
\end{equation}
where $K > 0$ is to be fixed large enough, and write 
\begin{equation}\label{de:vbve}
v(y,s) = v_b(y,s) + v_e(y,s),
\end{equation}
where 
$$v_b(y,s) = \chi(y,s) v(y,s)\quad \text{and} \quad v_e(y,s) = (1 - \chi(y,s))v(y,s).$$
Note that $supp (v_b(s)) \subset \mathbf{B}(0,2K\sqrt{s})$ and $supp (v_e(s)) \subset \mathbb{R}^N\setminus\mathbf{B}(0,K\sqrt{s})$.

In order to control $v_b$, we expand it with respect to the spectrum of $\mL$ in $L^2_\rho$ since the eigenfunctions of $\mL$ span the whole space $L^2_\rho(\mathbb{R}^N)$. More precisely, we write $v$ as follows:
\begin{align}\label{def:decomv}
v(y,s) = v_{0}(s) + v_{1}(s)\cdot y + \frac{1}{2}y^Tv_{2}\,y - tr(v_{2}) + v_{-}(y,s) + v_e(y,s),
\end{align}
where 
$v_{0}(s) = P_0(v_b)(y,s)$, $v_1(s)\cdot y = P_1(v_b)(y,s)$, $v_{-}(y,s) = P_-(v_b)(y,s) = \sum_{m \geq 3} P_m(v_b)(y,s)$, and $P_m$ is the projector on the eigenspace corresponding to the eigenvalue $1 - \frac{m}{2}$ defined by
\begin{equation}\label{def:Projector}
P_m(v_b)(y,s) = \sum_{\beta \in \mathbb{N}^N, |\beta| = m}\frac{\phi_\beta(y)}{\|\phi_\beta\|^2_{L^2_\rho}}\int_{\mathbb{R}^N} \phi_\beta(y)v_b(y,s)\rho(y)dy,
\end{equation}
where $\phi_\beta$ is defined in \eqref{def:Phim}, and $v_{2}(s) \in \mathbb{M}_N(\R)$ defined by
\begin{equation}\label{def:vb2}
v_{2}(s) = \int_{\mathbb{R}^N}v_b(y,s)\mathcal{M}(y)\rho(y)dy,
\end{equation}
where 
\begin{equation}\label{def:M2}
\mathcal{M}(y) = \left\{\frac{1}{4}y_iy_j - \frac{1}{2}\delta_{ij} \right\}_{1\leq i,j \leq N}.
\end{equation}
The reader should keep in mind that $v_m, m = 0, 1, 2$ and $v_-$ are coordinates of $v_b$ and not those of $v$.

\subsection{Definition of a shrinking set $V_A(s)$ and preparation of initial data.}\label{sec:defVAint}
Our two requirements \eqref{est:theo3_1} and \eqref{est:theo3_2} follow directly if we construct a solution $v(s)$ of equation \eqref{equ:v} such that $v(s)$ belongs to a set $V_A(s)$ for some $s_0 \geq 1$, where $V_A(s)$ is defined in the following:
\begin{defi}[\textbf{A shrinking set to zero}] \label{def:V_A} Let $\eta \in \left(0,\frac{1}{2}\right)$, for each $A > 0$, for each $s > 0$, we define $V_A(s)$ as being the set of all functions $g$ in $L^\infty(\mathbb{R}^N)$ such that
\begin{align}
|g_0(s)| \leq \frac{A}{s^{2+\eta}}, \quad |g_{1,i}(s)| &\leq \frac{A}{s^{2+\eta}}, \quad \forall i \in \{1,\cdots, N\},\nonumber\\
\left|g_{2,ij}(s) - \frac{a_{ij}}{s^2}\right| &\leq \frac{A^2}{s^{2 + \eta}}, \quad \forall i,j \in \{1, \cdots, N\}, \label{equ:conVAnullmode}\\
\forall y \in \mathbb{R}^N, \; |g_-(y,s)| &\leq \frac{A}{s^{2+\eta}}(1 + |y|^3),\nonumber\\
\|g_e(s)\|_{L^\infty} &\leq \frac{A^2}{s^{1/2 + \eta}},\nonumber
\end{align}
where $g_0, g_{1,i}, g_{2,ij}, g_-$ and $g_e$ are defined as in \eqref{def:decomv}, $a_{ij}$'s are the coefficient of the given matrix $\mathcal{A}$.\\
We also define $\hat{V}_A(s) \subset \mathbb{R} \times \mathbb{R}^N \times \mathbb{M}_N(\mathbb{R})$ as follows: 
$$\hat{V}_A(s) = \left[-\frac{A}{s^{2+\eta}}, \frac{A}{s^{2+\eta}}\right]\times\left[-\frac{A}{s^{2+\eta}}, \frac{A}{s^{2+\eta}}\right]^{N} \times \left\{\mathbb{M}_N\left(\left[-\frac{A^2}{s^{2 + \eta}}, \frac{A^2}{s^{2 + \eta}} \right]\right) + \frac{\mathcal{A}}{s^2} \right\}.$$
\end{defi}
\begin{rema}\label{rema:coverV_A} 
In \cite{MZdm97}, the shrinking set was very similar in the sense that one has to take $\eta = 0$ above and to replace the condition  \eqref{equ:conVAnullmode} by 
\begin{equation}\label{equ:conVAMZg2}
\forall i,j \in \{1, \dots, N\}, \;\; |g_{2,ij}(s)| \leq \frac{A^2 \log s}{s^2}.
\end{equation}
This way, Definition \ref{def:V_A} and especially \eqref{equ:conVAnullmode} appear as the originality in our strategy. Let us note that our shrinking set $V_A(s)$ is included in \cite{MZdm97}, provided that $s$ is large enough (with respect to the matrix $\mathcal{A}$).
\end{rema}

In order to see that the requirements \eqref{est:theo3_1} and \eqref{est:theo3_2} are fulfilled when $v(s) \in V_A(s)$ for all $s \geq s_0$, we write from \eqref{def:decomv},
\begin{align*}
v(y,s) &= \left\{v_{0}(s) + v_{1}(s)\cdot y + \frac{1}{2}y^Tv_{2}\,y - tr(v_{2}) + v_{-}(y,s)\right\}\cdot\textbf{1}_{\{|y| \leq 2K\sqrt{s}\}}  + v_e(y,s),
\end{align*}
which gives by Definition \ref{def:V_A}
\begin{equation}\label{equ:Con1for20}
\sup_{y \in \mathbb{R}^N}|v(y,s)| \leq \frac{C(A)}{s^{1/2 + \eta}},
\end{equation}
hence \eqref{conv:vtozero} and \eqref{est:theo3_1}.

As for \eqref{est:theo3_2}, we see from \eqref{equ:conVAnullmode} that 
\begin{equation}\label{equ:Con2for21}
w_{\mathcal{A},2}(s) - \hat{w}_2(s) = v_2(s) = \frac{1}{s^2}\mathcal{A} + \mathcal{O}\left(\frac{1}{s^{2 + \eta}}\right)
\end{equation}
on the one hand. On the other hand, introducing $u_{\mathcal{A}}$ the solution to equation \eqref{equ:problem} which blows up at time $T_{\mathcal{A}} = e^{-s_0}$ such that $\T[u_{\mathcal{A}}] = w_\mathcal{A} = \hat{w} + v$. From the classification result of \cite{FZnon00} given in page \pageref{equ:case1FZ}, we see that  case 2 does not hold, otherwise we would have by projection $w_{\mathcal{A},2}(s)  - \hat{w}_2(s) = \mathcal{O}\left(\frac{e^{-s/2}}{s^3}\right)$. Hence, $\mathcal{A} = 0$ from \eqref{equ:Con2for21}, which is a contradiction from \eqref{equ:conMathA}. Therefore, only case 1 holds, and we have
\begin{equation}\label{equ:tmpProth1}
w_{\mathcal{A}}(y,s) - \hat{w}(y,s) = \frac{1}{s^2}\left(\frac{1}{2}y^T\mathcal{B}y - tr(\mathcal{B}) \right) + o\left(\frac{1}{s^{2}}\right)
\end{equation}
for some $\mathcal{B} = \mathcal{B}(u_\mathcal{A},\hat{u})$. Therefore, projecting on the null modes, we get 
$$w_{\mathcal{A},2}(s) - \hat{w}_2(s) = \frac{1}{s^2}\mathcal{B} + o\left(\frac{1}{s^{2}}\right).$$
From \eqref{equ:Con2for21}, it follows that $\mathcal{A} = \mathcal{B}(u_\mathcal{A}, \hat{u})$. Thus, \eqref{est:theo3_2} follows from \eqref{equ:tmpProth1}.\\

\bigskip

Our goal then becomes to construct a solution $v(s)$ of equation \eqref{equ:v} such that 
$$v(s) \in V_A(s), \quad \text{for all }\;\; s \geq s_0,$$
for some $s_0$. Let us first give the general form we take for initial data to fulfill this requirement. Initial data (at time $s_0$) for the equation \eqref{equ:v} will depend on a finite number of real parameters $d_0$, $d_{1,i}$ and $d_{2,ij}$ with $1 \leq i,j \leq N$ as given in the following lemma:
\begin{lemm}[\textbf{Decomposition of initial data on the different components)}] \label{lemm:decomdata} For each $A > 1$, there exists $\delta_1(A) > 0$ such that for all $s_0 \geq \delta_1(A)$: If we consider the following function as initial data for equation \eqref{equ:v}:
\begin{equation}\label{def:intv0}
v_{d_0,d_1,d_2}(y,s_0) = \left(\frac{A}{s_0^{2+\eta}}(d_0 + d_1\cdot y) + \frac{1}{2}y^T\hat{d}_2\,y -2tr(\hat{d}_2)\right)\chi(2y,s_0),
\end{equation}
where 
$$\hat{d}_{2,ij} = \frac{a_{ij}}{s_0^2} + \frac{A^2d_{2,ij}}{s_0^{2 + \eta}},$$
and $\chi$ is defined in \eqref{def:chi}, then, the following holds: 
\begin{itemize}
\item[(i)] If $|d_0|+ |d_{1}|+ |d_{2}| \leq 2$, then, the components of $v_{d_0,d_1,d_2}(s_0)$ (or $v(s_0)$ for short) satisfy:
\begin{align*}
&\left|v_0(s_0) - \frac{Ad_0}{s_0^{2+\eta}}\right| \leq Ce^{-s_0}, \quad \left|v_{1,i}(s_0) - \frac{Ad_{1,i}}{s_0^{2+\eta}} \right| \leq Ce^{-s_0}, \quad \forall i\in \{1,\cdots,N\},\\
&\left|v_{2,ij}(s_0) - \frac{a_{ij}}{s_0^2} - \frac{A^2d_{2,ij}}{s_0^{2 + \eta}}\right| \leq C e^{-s_0}, \quad \forall i,j\in \{1,\cdots,N\},\\
&\left|v_{-}(y,s_0)\right| \leq C\left(\frac{|d_0| + |d_1| + |d_2| + \|\mathcal{A}\|}{s_0^{5/2}} \right)(1 + |y|^3),\\
&v_e(y,s_0) \equiv 0.
\end{align*}
\item[(ii)] If $(d_0, d_1, d_2)$ is chosen such that $(v_0, v_1, v_2)(s_0) \in \hat{V}_A(s_0)$, then 
\begin{align*}
&|d_0| + |d_1| + |d_2| \leq 2, \\
&\left\|\frac{v_-(s_0)}{1 + |y|^3} \right\|_{L^\infty} \leq \frac{C}{s_0^{5/2}}, \quad \|v_e(s_0)\|_{L^\infty} = 0,
\end{align*}
and $v(s_0)\in V_A(s_0)$ with "strict inequalities", except for $(v_0, v_1, v_2)(s_0)$, in the sense that
\begin{align*}
|v_0(s_0)| \leq \frac{A}{s_0^{2+\eta}}, \quad |v_{1,i}(s_0)| &\leq \frac{A}{s_0^{2+\eta}}, \quad \forall i \in \{1,\cdots, N\},\\
\left|v_{2,ij}(s_0) - \frac{a_{ij}}{s_0^2}\right| &\leq \frac{A^2}{s_0^{2 + \eta}}, \quad \forall i,j \in \{1, \cdots, N\},\\
\forall y \in \mathbb{R}^N, \; |v_-(y,s_0)| & < \frac{A}{s_0^{2+\eta}}(1 + |y|^3),\\
\|v_e(s_0)\|_{L^\infty} & < \frac{A^2}{s_0^{1/2 + \eta}}.
\end{align*}
\item[(iii)] There exists a subset $\mathcal{D}_{s_0} \subset \R \times \RN \times \mathbb{M}_N(\R)$ such that the mapping 
$$(d_0, d_1, d_2) \mapsto (v_0, v_1, v_2)(s_0)$$
is linear  and one to one  from $\mathcal{D}_{s_0}$ on to $\hat{V}_A(s_0)$ and maps $\partial \mathcal{D}_{s_0}$ into $\partial \hat{V}_A(s_0)$. Moreover, it is of degree one on the boundary and the following equivalence holds:
$$v(s_0) \in V_A(s_0) \quad \text{if and only if}  \quad (d_0,d_1,d_2) \in \mathcal{D}_{s_0}.$$
\end{itemize}
\end{lemm}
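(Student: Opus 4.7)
The proof is a direct computation from the explicit polynomial-times-cutoff form of the initial data. A first observation is that since $\chi(2y, s_0)$ is supported in $\{|y| \leq K\sqrt{s_0}\}$, on which $\chi(y, s_0) \equiv 1$, we have $v_b(s_0) = v(s_0)$ identically and $v_e(s_0) \equiv 0$, which gives the last claim of (i) for free. For $v_0, v_1, v_2$, the plan is to write each projection as a Gaussian integral of the unsmoothed polynomial factor $P(y) := \frac{A}{s_0^{2+\eta}}(d_0 + d_1 \cdot y) + \frac{1}{2} y^T \hat{d}_2 y - 2\,tr(\hat{d}_2)$ against $\phi_0$, $\phi_{1,i}$, or $\mathcal{M}_{ij}$, minus a cutoff correction. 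The Gaussian moments ($\int \rho = 1$, $\int y_i y_j \rho = 2\delta_{ij}$, vanishing odd moments, Hermite normalization) produce the claimed leading-order formulas, while the cutoff correction, supported in $\{|y| \geq K\sqrt{s_0}/2\}$ with the Gaussian weight $\rho \sim e^{-|y|^2/4}$, is $O(e^{-c s_0})$.

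For the bound on $v_-$, I would split $\mathbb{R}^N$ into three zones. On the inner zone $|y| \leq K\sqrt{s_0}/2$, $\chi(2y,s_0) = 1$, so $v = P$ and $v_-$ is $P$ minus the polynomial approximation $v_0 + v_1 \cdot y + \tfrac{1}{2} y^T v_2 y - tr(v_2)$, which by the previous projection estimates is $O(e^{-s_0})(1 + |y|^3)$. On the outer zone $|y| \geq K\sqrt{s_0}$, $v_b$ vanishes and $v_- = -v_0 - v_1\cdot y - \tfrac{1}{2} y^T v_2 y + tr(v_2)$; using $|v_{2,ij}| \leq C(\|\mathcal{A}\|+|d|)/s_0^2$ together with the inequality $1 + |y|^2 \leq C|y|^3/\sqrt{s_0}$ available there yields the required $C(\|\mathcal{A}\|+|d|) s_0^{-5/2}(1+|y|^3)$ bound. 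On the transition zone $K\sqrt{s_0}/2 \leq |y| \leq K\sqrt{s_0}$, direct estimation of each term using $|y|^2 \sim s_0$ gives a $C(\|\mathcal{A}\|+|d|)/s_0$ pointwise bound, which matches the target since $|y|^3 \sim s_0^{3/2}$ there.

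Part (ii) follows by inverting the near-diagonal affine map $(d_0, d_1, d_2) \mapsto (v_0, v_1, v_2)(s_0) - (0, 0, \mathcal{A}/s_0^2)$ established in (i), whose linear part is $\mathrm{diag}(A s_0^{-2-\eta}, A s_0^{-2-\eta}, A^2 s_0^{-2-\eta})$ up to an $O(e^{-s_0})$ perturbation. If $(v_0, v_1, v_2)(s_0) \in \hat{V}_A(s_0)$, inversion gives $|d_0|+|d_1|+|d_2| \leq 1 + C s_0^{2+\eta} e^{-s_0} \leq 2$ for $s_0 \geq \delta_1(A)$ large, and the remaining $v_-, v_e$ bounds then follow from (i) with strict inequalities (the controlling factors $C e^{-s_0}$ and the identically-zero $v_e$ beat the target rates $A s_0^{-2-\eta}$ and $A^2 s_0^{-1/2-\eta}$ once $s_0$ is large). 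Part (iii) is then immediate from the affine-linear structure: define $\mathcal{D}_{s_0}$ as the preimage of $\hat{V}_A(s_0)$ under this affine homeomorphism, whose linearization has positive determinant of order $(A s_0^{-2-\eta})^{N+1}(A^2 s_0^{-2-\eta})^{N(N+1)/2}$, so the boundary maps to the boundary with topological degree one.

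I expect the main technical obstacle to lie in the $v_-$ estimate in the transition zone $K\sqrt{s_0}/2 \leq |y| \leq K\sqrt{s_0}$, where neither $v = P$ nor $v_b = 0$ applies directly. The key mechanism is that $|y| \sim \sqrt{s_0}$ there, which converts a naive $s_0^{-1}$ pointwise bound on $v$ into the required $s_0^{-5/2}(1+|y|^3)$ decay via $s_0^{-1} \sim s_0^{3/2} \cdot s_0^{-5/2}$; the delicacy is in keeping careful track of which contributions genuinely produce the $s_0^{-5/2}$ prefactor (as opposed to smaller $e^{-s_0}$ or $s_0^{-2-\eta}$ contributions from the other two zones).
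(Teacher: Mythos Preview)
Your proposal is correct and follows precisely the approach the paper has in mind: the paper itself gives no details, stating only that parts (i) and (ii) are ``purely technical'' computations from the definitions (with a reference to Nouaili--Zaag for a similar case) and that (iii) follows from (i), (ii), and the definition of $V_A$. Your three-zone analysis for $v_-$, the Gaussian-moment computation of $v_0, v_1, v_2$ with exponentially small cutoff corrections, and the affine-map argument for (ii)--(iii) are exactly the expected ingredients, and your identification of the transition zone as the place where the $s_0^{-5/2}$ rate is saturated is accurate.
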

\begin{proof} For parts $(i)$ and $(ii)$, the proof is purely technical and follows from the definition \eqref{def:decomv}. For details in a similar case, see Nouaili and Zaag \cite{NZcpde15}. Part $(iii)$ follows from the first three estimates in part $(i)$, part $(ii)$ and Definition \ref{def:V_A} of $V_A$. This ends the proof of Lemma \ref{lemm:decomdata}.
\end{proof}

\subsection{Reduction to a finite dimensional problem and conclusion of Theorem \ref{theo:1}.}

Let us state the following central proposition which implies Theorem \ref{theo:1}:
\begin{prop}[\textbf{Sufficient condition for Theorem \ref{theo:1}}] \label{prop:equivTheo1} There exist $A > 1$ and $S_0 > 0$ such that for all $s_0 \geq S_0$, there exists $(d_0,d_1,d_2) \in \mathcal{D}_{s_0}$ such that the equation \eqref{equ:v} with initial data at $s = s_0$ given by $v_{d_0,d_1,d_2}(y,s_0)$ \eqref{def:intv0}, has a unique solution $v_{d_0,d_1,d_2}(s)$ defined for $s \geq s_0$ and satisfying 
$$v(s) \in V_A(s), \quad \forall s \geq s_0.$$
\end{prop}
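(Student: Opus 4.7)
The plan is to prove Proposition \ref{prop:equivTheo1} by a shooting/topological argument in the finite-dimensional parameter space $\mathcal{D}_{s_0}$ introduced in Lemma \ref{lemm:decomdata}. The argument has two ingredients: a reduction of the infinite-dimensional problem to a finite-dimensional one (the components $v_-$ and $v_e$ are automatically controlled), followed by a degree-theoretic obstruction to the non-existence of a suitable parameter.

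First I would prove the following reduction: there exist $A$ large and $S_0$ large such that whenever $v(s) \in V_A(s)$ for all $s \in [s_0, s_1]$ with $s_0 \geq S_0$ and $v(s_1) \in \partial V_A(s_1)$, one has in fact $(v_0, v_1, v_2)(s_1) \in \partial \hat{V}_A(s_1)$; in other words, the first possible exit is forced through the modes $|\beta| \leq 2$. This is achieved via the Duhamel representation \eqref{for:vint}: the kernel $\mathcal{K}(s, \sigma)$ contracts $v_-$ at rate at least $e^{-(s-\sigma)/2}$, since the restriction of $\mathrm{spec}(\mathcal{L})$ to modes with $|\beta| \geq 3$ is bounded above by $-\frac{1}{2}$; on $v_e$, the asymptotic \eqref{equ:asymalp} shows that $\mathcal{L}+\alpha$ behaves like $\mathcal{L} - \frac{p}{p-1}$ outside the blow-up region, with strictly negative spectrum; the quadratic term $B(v)$ is $O(\|v\|_{L^\infty}^2)$ and thus controlled by \eqref{equ:Con1for20}, and the correction $(\gamma - \alpha)v$ is negligible thanks to \eqref{est:uhat}. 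Combining these estimates, the bounds defining $V_A$ on $v_-$ and $v_e$ remain strict, so the only possible exit is through $(v_0, v_1, v_2)$.

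Second, I would argue by contradiction: if the proposition fails, then for every $(d_0, d_1, d_2) \in \mathcal{D}_{s_0}$ there is a finite first exit time $s^* = s^*(d_0, d_1, d_2)$ and, by the reduction, $(v_0, v_1, v_2)(s^*) \in \partial \hat{V}_A(s^*)$. Projecting equation \eqref{equ:v} on the modes $\phi_\beta$, $|\beta| \leq 2$, yields a finite-dimensional ODE system whose leading linear part has eigenvalues $1$ on $v_0$, $\tfrac{1}{2}$ on each $v_{1,i}$, and $0$ on each $v_{2,ij}$; in the new shrinking set the zero modes also become effectively outgoing, because the special solution $\frac{a_{ij}}{s^2}$ is absorbed into the definition of $V_A$, and the renormalized variable $s^{2+\eta}(v_{2,ij} - a_{ij}/s^2)/A^2$ obeys an equation whose linearization, after subtracting the contributions of $\alpha$ and of the particular solution, grows like $s^\eta$. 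Consequently all $N + 1 + \tfrac{N(N+1)}{2}$ coordinates cross their boundary transversally, which makes $s^*$ continuous in $(d_0, d_1, d_2)$; after rescaling this produces a continuous map
\begin{equation*}
\Phi : \mathcal{D}_{s_0} \longrightarrow \partial \hat{V}_A(s_0)
\end{equation*}
whose restriction to $\partial \mathcal{D}_{s_0}$ has degree one by Lemma \ref{lemm:decomdata}(iii). Since $\mathcal{D}_{s_0}$ is homeomorphic to a closed ball and $\partial \hat{V}_A(s_0)$ to its boundary sphere, the existence of such a retraction is forbidden by the no-retraction theorem (equivalently, by Brouwer), providing the desired contradiction.

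The main obstacle in this plan is the transversality analysis for the zero modes $v_{2,ij}$. Because the eigenvalue of $\mathcal{L}$ on $|\beta| = 2$ vanishes, the outgoing character does not come from the linear operator itself but from the subtle interplay of the potential $\alpha(y,s)$, the correction $(\gamma - \alpha)v$, the projection of the nonlinearity $B(v)$ and the explicit ansatz $v_{2,ij} = a_{ij}/s^2 + \ldots$; one must compute these projections sharply at order $1/s^{2+\eta}$ and verify that the rescaled coordinate genuinely points outward at $\partial \hat{V}_A(s)$. This is exactly the novelty relative to \cite{MZdm97}, where the zero modes were damped and stayed out of the topological argument; here they enter it, and this is reflected in the final count $N + 1 + \frac{N(N+1)}{2}$ of degrees of freedom.
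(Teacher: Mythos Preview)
Your proposal is correct and follows essentially the same route as the paper: a reduction step showing that exit from $V_A(s)$ can only occur through $(v_0,v_1,v_2)\in\partial\hat V_A(s)$, a transversality statement at that boundary making the exit time continuous, and a degree/no-retraction contradiction using Lemma \ref{lemm:decomdata}(iii). You have also correctly singled out the genuine novelty, namely the outgoing behavior of the null modes: in the paper this is made precise via the ODE $|h_{ij}'(s)+\tfrac{2}{s}h_{ij}(s)|\le \tfrac{CA}{s^{3+\eta}}$ for $h=v_2-\mathcal A/s^2$, so that at the boundary $h_{ij}=\pm A^2/s^{2+\eta}$ one has $h_{ij}'$ beating $\tfrac{d}{ds}(A^2/s^{2+\eta})$ precisely because $\eta>0$, which matches your heuristic that the rescaled variable grows like $s^\eta$.
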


Let us first give the proof of Proposition \ref{prop:equivTheo1}, then the proof of Theorem \ref{theo:1} will be given later. The proof of Proposition \ref{prop:equivTheo1} follows from the general ideas developed in \cite{MZdm97}. It is divided in two parts:\\
- In the first part, we reduce the problem of the control $v(s)$ in $V_A(s)$ to the control of $(v_0, v_1, v_2)(s)$, which are the components of $v$ corresponding to the positive and null modes given in expansion \eqref{def:decomv}. That is, we reduce an infinite dimensional problem to a finite dimensional one.\\
- In the second part, we solve the finite dimensional problem, using dynamics of $(v_0, v_1, v_2)(s)$ and a topological argument based on the variation of the finite dimensional parameters $(d_0, d_1, d_2)$ appearing in the expression \eqref{def:intv0} of initial data $v_{d_0,d_1,d_2}(y,s_0)$.\\

\subsubsection*{\textbf{Part I: Reduction to a finite dimensional problem.}}
In this step, we first show through a priori estimates that the control of $v(s)$ in $V_A(s)$ reduces to the control of $(v_0,v_1, v_2)(s)$ in $\hat{V}_A(s)$. As presented in \cite{MZdm97} (see also \cite{ZAAihn98}, \cite{MZjfa08}, \cite{NZcpde15}, \cite{NZasnsp15}), this step makes the heart of our contribution. We mainly claim the following:

\begin{prop}[\textbf{Control of $v(s)$ by $(v_0,v_1,v_2)(s)$ in $\hat{V}_A(s)$}] \label{prop:redu} There exist $A_3 > 0$ such that for each $A \geq A_3$, there exists $\delta_3(A) > 0$ such that for each $s_0 \geq \delta_3(A)$, we have the following properties:\\
- if $(d_0, d_1, d_2)$ is chosen so that $(v_0, v_1, v_2)(s_0) \in \hat{V}_A(s_0)$, and\\
- if for all $s \in [s_0,s_1]$, $v(s) \in V_A(s)$ and $v(s_1) \in \partial V_A(s_1)$ for some $s_1 \geq s_0$, then 
\begin{itemize}
\item[(i)] \textbf{(Reduction to a finite dimensional problem)} $\;\; (v_0, v_1,v_2)(s_1) \in \partial \hat{V}_A(s_1)$.
\item[(ii)] \textbf{(Transversality)} There exists $\mu_0 > 0$ such that for all $\mu \in (0, \mu_0)$, 
$$(v_0, v_1, v_2)(s_1 + \mu) \not\in \hat{V}_A(s_1 + \mu)\;\text{(hence, $v(s_1 + \mu) \not\in V_A(s_1 + \mu)$).}$$
\end{itemize}
\end{prop}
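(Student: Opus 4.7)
The plan is to follow the reduction-and-transversality strategy of Bricmont--Kupiainen \cite{BKnon94} and Merle--Zaag \cite{MZdm97}, adapted to the finer shrinking set in Definition \ref{def:V_A}. The key point is that the kernel $\mathcal{K}(s,\tau)$ of $\mathcal{L}+\alpha$ obeys the same a priori bounds as in those works (this is precisely why $\alpha$ was kept as the potential rather than $\gamma$), while the perturbation $(\gamma-\alpha)v$ is of lower order by \eqref{est:uhat} and $B(v)$ is quadratic in $v$. I would work throughout with the Duhamel formulation \eqref{for:vint} projected onto the decomposition \eqref{def:decomv}.

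For part (i), I would show that whenever $v(s)\in V_A(s)$ on $[s_0,s_1]$, the components $v_-$ and $v_e$ satisfy strict inequalities at $s_1$, so that the only way to reach $\partial V_A(s_1)$ is through $(v_0,v_1,v_2)$. For $v_-$, the spectrum of $\mathcal{L}$ on modes of index $\geq 3$ lies in $(-\infty,-1/2]$, giving a contraction factor $e^{-(s-\tau)/2}$; together with source terms from the cutoff $\chi$, from $B(v)$, and from $(\gamma-\alpha)v$, this yields
\[
\left\|\frac{v_-(\cdot,s)}{1+|y|^3}\right\|_{L^\infty} \leq \frac{C(A)}{s^{5/2}},
\]
strictly better than $A/s^{2+\eta}$ since $\eta<1/2$ and $A$ can be taken large. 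For $v_e$, the potential $\alpha$ converges to $-p/(p-1)<0$ outside the blow-up region by \eqref{equ:asymalp}, so the semigroup has negative spectrum there; a standard parabolic/integral estimate produces a bound strictly smaller than $A^2/s^{1/2+\eta}$.

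For part (ii), at time $s_1$ one of the finite-dimensional components is on its boundary. Projecting \eqref{equ:v} onto the eigenspaces and extracting leading terms, I expect
\[
\dot{v}_0 = v_0 + O(s^{-3}), \qquad \dot{v}_{1,i} = \tfrac{1}{2}\,v_{1,i} + O(s^{-3}),
\]
and, setting $\tilde{v}_{2,ij} := v_{2,ij}-a_{ij}/s^2$, an effective equation
\[
\dot{\tilde{v}}_{2,ij} = -\tfrac{2}{s}\,\tilde{v}_{2,ij} + O\bigl(s^{-3-\eta}\bigr),
\]
where the $-2/s$ coefficient arises from projecting the potential action $\alpha v$ onto the degree-two Hermite functions $\phi_\beta$ with $|\beta|=2$, and $a_{ij}/s^2$ is an exact fixed point of the unperturbed effective dynamics. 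Comparing these derivatives with the derivatives of the boundaries $\pm A/s^{2+\eta}$ and $\pm A^2/s^{2+\eta}$ yields outward-pointing velocity at the boundary (with margin $\sim A/s^{2+\eta}$ for $v_0,v_1$ and margin $\sim \eta A^2/s^{3+\eta}$ for $\tilde v_2$), which gives the transversality.

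The main obstacle is the null-mode step in part (ii). In \cite{MZdm97}, $v_{2,ij}$ was controlled as a ``negative-type'' mode with a $\log s$ loss, whereas here, after the subtraction of $a_{ij}/s^2$, it must be treated as a genuine ``positive-type'' mode. The transversality margin is only $\eta A^2/s^{3+\eta}$, a small correction to the natural decay $2A^2/s^{3+\eta}$, so every remainder term coming from $B(v)$, from $(\gamma-\alpha)v$, from the cutoff $\chi$, and from the higher-order expansion of $\alpha$ must be shown to be $o(s^{-3-\eta})$ uniformly over $V_A(s)$. The estimate \eqref{est:uhat} is decisive here, since it gives that $\hat w$ differs from $\varphi$ at order $s^{-1/2}$, and the delicate tuning of $V_A$ and of the initial data in Lemma \ref{lemm:decomdata} is engineered precisely so that the perturbation $(\gamma-\alpha)v$ produces a contribution on the null modes of order $s^{-5/2}$, safely below the transversality threshold.
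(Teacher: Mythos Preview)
Your overall strategy matches the paper's: Duhamel analysis with the kernel bounds of \cite{BKnon94,MZdm97} for part~(i), and projection of \eqref{equ:v} onto the Hermite eigenspaces for the ODE analysis in part~(ii). You also correctly identify the crux, namely the transversality for the shifted null mode $\tilde v_{2,ij}=v_{2,ij}-a_{ij}/s^2$, where the available margin is only $\eta A^2/s^{3+\eta}$.

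There is, however, a genuine gap in your treatment of the $(\gamma-\alpha)v$ contribution to this step. You invoke \eqref{est:uhat}, obtain $\hat w-\varphi=O(s^{-1/2})$, deduce that the null-mode projection of $(\gamma-\alpha)v$ is of order $s^{-5/2}$, and declare this ``safely below the transversality threshold''. But $s^{-5/2}$ is \emph{not} below $s^{-3-\eta}$: since $\eta\in(0,\tfrac12)$ one has $5/2<3+\eta$, so $s^{-5/2}\gg s^{-3-\eta}$ as $s\to\infty$. With only the uniform bound \eqref{est:uhat}, an error of size $Cs^{-5/2}$ would swamp the margin $\eta A^2/s^{3+\eta}$ for large $s$, and the outward-crossing argument for $\tilde v_2$ would fail.

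What is actually needed is the sharper \emph{local} information on $\hat w-\varphi$ coming from the construction of $\hat u$ itself: $\hat w-\varphi$ lies in the original Merle--Zaag shrinking set $\tilde V_A(s)$ (see Appendix~\ref{ap:B} and \eqref{def:V_AMZ}), which gives
\[
|(\hat w-\varphi)(y,s)|\;\le\;\frac{C\log s}{s^{2}}\,(1+|y|^{3})
\]
on the support of $\chi$, hence $|(\gamma-\alpha)(y,s)|\chi(y,s)\le C(\log s)\,s^{-2}(1+|y|^{3})$. Combined with $|v(y,s)|\le C s^{-2}(1+|y|^{3})$ for $v\in V_A(s)$, the null-mode projection of $(\gamma-\alpha)v$ is $O((\log s)/s^{4})$, which \emph{is} well below $s^{-3-\eta}$. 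The paper then obtains $|\tilde v_{2,ij}'+\tfrac{2}{s}\tilde v_{2,ij}|\le CA\,s^{-3-\eta}$ (the $A$-dependence comes from projecting the $\alpha v$ term through the modes $v_0,v_1,v_3,v_4$), and transversality follows for $A>C/\eta$. In short, \eqref{est:uhat} is not the decisive estimate here; the construction-level control of $\hat w-\varphi$ in $\tilde V_A$ is.

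A minor remark on part~(i): your claimed bound $\|v_-(\cdot,s)/(1+|y|^3)\|_{L^\infty}\le C(A)s^{-5/2}$ is slightly optimistic, since the $(\gamma-\alpha)v$ source contributes $C(\log s)\,s^{-5/2}$ to $v_-$ by the same mechanism. The paper states and uses the weaker bound $Cs^{-2-\eta}$, which is all that is required to beat $A/s^{2+\eta}$ for $A$ large.
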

\begin{proof} Since we would like to keep the proof of Proposition \ref{prop:equivTheo1} short, we leave the proof of Proposition \ref{prop:redu} to the next subsection. 
\end{proof}
\subsubsection*{\textbf{Part II: Topological argument for the finite dimensional problem.}}
In the following proposition, we study the Cauchy problem for equation \eqref{equ:v}.
\begin{prop}[\textbf{Local in time solution of equation \eqref{equ:v}}] \label{prop:localex} For all $A > 1$, there exists $\delta_5(A)$ such that for all $s_0 \geq \delta_5(A)$, the following holds: For all $(d_0,d_1,d_2) \in \mathcal{D}_{s_0}$, there exists $s_{max}(d_0,d_1,d_2) > s_0$ such that equation \eqref{equ:v} with initial data $v_{d_0,d_1,d_2}(s_0)$ given in \eqref{def:intv0} has a unique solution satisfying $v(s) \in V_{A+1}(s)$ for all $s \in [s_0,s_{max})$.
\end{prop}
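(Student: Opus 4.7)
The plan is to combine standard local-in-time well-posedness for a semilinear heat equation in $L^\infty(\RN)$ with a short continuity argument: at $s = s_0$ the initial data sits in $V_A(s_0)$ with all shrinking-set inequalities strict (against the $V_{A+1}(s_0)$ thresholds), so the containment persists on a right-neighbourhood of $s_0$.

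For the local Cauchy problem, I would observe that $v_{d_0,d_1,d_2}(\cdot,s_0)$ in \eqref{def:intv0} is a polynomial of degree at most $2$ multiplied by the compactly supported cutoff $\chi(2\,\cdot,s_0)$, hence it lies in $L^\infty(\RN)\cap C(\RN)$, with norm bounded by some $C(A,\mathcal{A},s_0)$. Since $\hat{w}(\cdot,s_0)=\T[\hat{u}](\cdot,s_0)$ is uniformly bounded (by \eqref{est:uhat}), the corresponding initial datum $w(s_0)=\hat{w}(s_0)+v(s_0)$ for \eqref{equ:w} is bounded. Undoing the similarity change \eqref{def:simivars} gives bounded initial data for \eqref{equ:problem}, to which standard $L^\infty$ theory for the semilinear heat equation (fixed point on the heat-kernel Duhamel formula; the nonlinearity $|u|^{p-1}u$ is locally Lipschitz on bounded sets) yields a unique classical solution on a short time interval. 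Translating back, we obtain a unique $v\in C([s_0,s_0+\sigma);L^\infty(\RN))\cap C^{1,2}((s_0,s_0+\sigma)\times\RN)$ solving \eqref{equ:v}, for some $\sigma=\sigma(A,s_0)>0$.

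To upgrade this to $v(s)\in V_{A+1}(s)$, I would use Lemma \ref{lemm:decomdata}. Part (iii) together with the assumption $(d_0,d_1,d_2)\in\mathcal{D}_{s_0}$ gives $(v_0,v_1,v_2)(s_0)\in \hat{V}_A(s_0)$, and part (ii) then yields $v(s_0)\in V_A(s_0)$ with the $v_-$ and $v_e$ bounds being \emph{strict} at $s=s_0$ (in fact $v_e(s_0)\equiv 0$ and $\|v_-(s_0)/(1+|y|^3)\|_{L^\infty}\le C s_0^{-5/2}$). Since $A<A+1$, the thresholds defining $V_{A+1}(s_0)$ are all strictly larger than those of $V_A(s_0)$, so every inequality in Definition \ref{def:V_A} holds strictly at $s=s_0$ when compared to the $V_{A+1}$-bounds. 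A continuity argument now suffices: each scalar quantity controlled by Definition \ref{def:V_A} depends continuously on $s$. For $v_0,v_{1,i},v_{2,ij}$, this follows from $v\in C([s_0,s_0+\sigma);L^\infty(\RN))$, the smoothness of $s\mapsto \chi(\cdot,s)$, and the formulas \eqref{def:Projector}, \eqref{def:vb2} (the integrands are uniformly dominated by a Gaussian). Similarly $\|v_e(s)\|_{L^\infty}=\|(1-\chi(\cdot,s))v(s)\|_{L^\infty}$ is continuous in $s$. Consequently, all strict inequalities at $s_0$ persist on some subinterval $[s_0,s_{\max})\subset[s_0,s_0+\sigma)$, i.e. $v(s)\in V_{A+1}(s)$ there.

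The one mildly delicate step is the continuity in $s$ of the weighted negative-mode norm $\|v_-(s)/(1+|y|^3)\|_{L^\infty}$. I would handle it by writing
\begin{equation*}
v_-(y,s)\;=\;\chi(y,s)v(y,s)\;-\;v_0(s)\;-\;v_1(s)\cdot y\;-\;\tfrac{1}{2}y^Tv_2(s)\,y\;+\;\mathrm{tr}(v_2(s)),
\end{equation*}
so that $v_-(s)$ is a sum of a function supported in $\{|y|\le 2K\sqrt{s}\}$ (hence bounded by $\|v(s)\|_{L^\infty}$ on that ball, with zero weighted norm on its complement) and a polynomial of degree two whose coefficients are the already-continuous scalars $v_0,v_1,v_2$. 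Dividing by $1+|y|^3$ and taking $L^\infty$, each piece is continuous in $s$. This is the only place where I expect any care; the rest of the argument is routine semigroup theory plus an open-set/continuity observation.
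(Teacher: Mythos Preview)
Your proposal is correct and follows essentially the same approach as the paper: reduce the Cauchy problem for \eqref{equ:v} to that for \eqref{equ:problem} via the similarity change \eqref{def:simivars}, invoke standard $L^\infty$ local well-posedness, use Lemma \ref{lemm:decomdata}(iii) to get $v(s_0)\in V_A(s_0)\subset V_{A+1}(s_0)$, and conclude by continuity. The paper's proof is considerably more terse and does not spell out the continuity of the individual components (in particular of $\|v_-(s)/(1+|y|^3)\|_{L^\infty}$), which you handle carefully; your extra detail is sound and fills in what the paper leaves implicit.
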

\begin{proof} Using the definition  \eqref{intr:v} of $v$ and the similarity variables transformation \eqref{def:simivars}, we see that the Cauchy problem of \eqref{equ:v} is equivalent to the Cauchy problem of equation \eqref{equ:problem}. Note that the initial data for \eqref{equ:problem} is derived from the initial data for \eqref{equ:v} at $s=s_0$ given in \eqref{def:intv0}, and it belongs to $L^\infty(\R)$, which insures the local existence of $u$ in $L^\infty(\R)$ (see the introduction). From part $iii)$ of Lemma \ref{lemm:decomdata}, we have $v_{d_0,d_1,d_2}(s_0) \in V_A(s_0) \subseteq V_{A + 1}(s_0)$. Then there exists $s_{max}$ such that for all $s \in [s_0,s_{max})$, we have $v(s) \in V_{A+1}(s)$. This concludes the proof of Proposition \ref{prop:localex}.
\end{proof}
Let us now derive the conclusion of Proposition \ref{prop:equivTheo1}, assuming Proposition \ref{prop:redu}. Although the derivation of the conclusion is the same as in \cite{MZdm97}, we would like to give details of the proof for the reader's convenience.
\begin{proof}[\textbf{Proof of Proposition \ref{prop:equivTheo1},  assuming Proposition \ref{prop:redu}}] Let us take $A \geq A_1$ and $s_0 \geq \delta_3$, where $A_1$ and $\delta_3$ are given in Proposition \ref{prop:redu}. We will find a parameter $(d_0,d_1,d_2)$ in the set $\mathcal{D}_{s_0}$ defined in Lemma \ref{lemm:decomdata} such that 
$$v_{d_0, d_1, d_2}(s) \in V_A(s), \quad \forall s \in [s_0, +\infty),$$
where $v_{d_0,d_1,d_2}$ is the solution to equation \eqref{equ:v} with initial data given in \eqref{def:intv0}.

We proceed by contradiction. From $(iii)$ of Lemma \ref{lemm:decomdata}, this means that for all $(d_0,d_1,d_2) \in \mathcal{D}_{s_0}$, there exists $s_*(d_0,d_1,d_2) \geq s_0$ such that $v_{d_0,d_1, d_2}(s) \in V_A(s)$ for all $s \in [s_0,s_*]$ and $v_{d_0,d_1,d_2}(s_*) \in \partial V_A(s_*)$. Applying item $(i)$ in Proposition \ref{prop:redu}, we see that $v_{d_0,d_1,d_2}(s_*)$ can leave $V_A(s_*)$ only by its first three components,  that is
$$(v_0,v_1,v_2)(s_*) \in \partial \hat{V}_A(s_*).$$
Therefore, we can define the following function: 
\begin{align*}
\Phi \;: \mathcal{D}_{s_0} &\mapsto \partial ([-1,1] \times [-1,1]^N \times \mathbb{M}_N([-1,1]))\\
(d_0, d_1, d_2) &\to \left(\frac{s_*^{2+\eta}}{A}v_0(s_*), \frac{s_*^{2+\eta}}{A}v_1(s_*), \frac{s_*^{2+\eta}}{A^2} \left(v_2(s_*) + \frac{\mathcal{A}}{s_*^2} \right) \right).
\end{align*}
Since $v(y,s)$ is continuous in $(d_0,d_1,d_2, s)$ (see Lemma \ref{lemm:decomdata} and Proposition \ref{prop:localex}), it follows that $(v_0,v_1,v_2)(s)$ is continuous with respect to $(d_0,d_1,d_2,s)$ too. Then, using the transversality property of $(v_0,v_1,v_2)$ on $\partial \hat{V}_A$ (part $(ii)$ of Proposition \ref{prop:redu}), we see that $s_*(d_0,d_1,d_2)$ is continuous. Therefore, $\Phi$ is continuous.

If we manage to prove that $\Phi$ is of degree one on the boundary, then we have a contradiction from the degree theory. Let us prove that. From item $(iii)$ in Lemma \ref{lemm:decomdata}, we see that if $(d_0,d_1,d_2)$ is on the boundary of $\mathcal{D}_{s_0}$, then 
$$v(s_0) \in V_A(s_0) \quad \text{and}\quad (v_0,v_1,v_2)(s_0) \in \partial \hat{V}_A(s_0).$$
Using $(ii)$ of Proposition \ref{prop:redu}, we see that $v(s)$ must leave $V_A(s)$ at $s = s_0$, hence $s_*(d_0,d_1,d_2) = s_0$ and $\Phi(d_0, d_1, d_2) = \left(\frac{s_0^{2 + \eta}}{A}v_0(s_0), \frac{s_0^{2 + \eta}}{A}v_1(s_0), \frac{s_0^{2 + \eta}}{A^2}(v_2(s_0) + \frac{\mathcal{A}}{s_0^2})\right)$. Using again $(iii)$ of Lemma \ref{lemm:decomdata}, we see that the restriction of $\Phi$ to the boundary is of degree $1$. This gives us a contradiction (by the index theory). Thus, there exists $(d_0,d_1,d_2) \in \mathcal{D}_{s_0}$ such that for all $s \geq s_0$, $v_{d_0,d_1,d_2}(s) \in V_A(s)$, which is the conclusion of Proposition \ref{prop:equivTheo1}.
\end{proof}

Let us now derive Theorem \ref{theo:1} from Proposition \ref{prop:equivTheo1}, assuming Proposition \ref{prop:redu}.
\begin{proof}[\textbf{Proof of Theorem \ref{theo:1} from Proposition \ref{prop:equivTheo1}, assuming Proposition \ref{prop:redu}}] Applying Proposition \ref{prop:equivTheo1} with $s_0 = S_0$, we derive the existence of $v_{\mathcal{A}}(s) \in V_A(s)$ for all $s \geq S_0$. Let us introduce $w_{\mathcal{A}}$ the solution of \eqref{equ:w} such that
$$w_{\mathcal{A}}(y,s) = \hat{w}(y,s) + v_{\mathcal{A}}(y,s),$$
then $u_{\mathcal{A}}$ the solution of equation \eqref{equ:problem} such that 
$$\T[u_\mathcal{A}] = w_{\mathcal{A}}.$$
From the arguments given around \eqref{equ:Con1for20} and \eqref{equ:Con2for21}, we have proved that $w_{\mathcal{A}}$ satisfies \eqref{est:theo3_1} and \eqref{est:theo3_2}, hence $u_{\mathcal{A}}$ satisfies \eqref{est:consMZ97} and \eqref{est:crunew}. It remains to show that $u_{\mathcal{A}}$ blows up only at the origin. To this end, let us remark from \eqref{est:consMZ97} that 
$$(T_{\mathcal{A}}-t)^\frac{1}{p-1}u_{\mathcal{A}}(0,t) \sim f(0) = \kappa,$$
and 
$$\forall x_0 \ne 0, \quad (T_{\mathcal{A}}-t)^\frac{1}{p-1}u_{\mathcal{A}}(x_0,t) \to 0, \quad \quad \text{as} \; t \to T_{\mathcal{A}}.$$
From the classification result of Giga and Kohn \cite{GKcpam89}, this implies that $u_{\mathcal{A}}$ blows up only at the origin. Hence, $u_{\mathcal{A}} \in \mathbb{B}'_{0,T_{\mathcal{A}}}$ with \eqref{equ:theo1refined} satisfied. This concludes the proof of Theorem \ref{theo:1}, assuming Proposition \ref{prop:redu} holds.
\end{proof}

\subsection{Proof of Proposition \ref{prop:redu}.}
We give in this subsection the proof of Proposition \ref{prop:redu} in order to complete the proof of Theorem \ref{theo:1}. The proof follows the ideas of \cite{MZdm97} and we proceed in three steps:
\begin{itemize}
\item[-] Step 1: we give a priori estimates  on $v(s)$ in $V_A(s)$: assume that for given $A > 0$ large, $\lambda > 0$ and an initial time $s_0 \geq \sigma_2(A,\lambda) \geq 1$, we have $v(s) \in V_A(s)$ for each $s \in [\tau, \tau + \lambda]$ where $\tau \geq s_0$, then using the integral form \eqref{for:vint} of $v(s)$, we derive new bounds on $v_-(s)$ and $v_e(s)$ for $s \in [\tau, \tau + \lambda]$.
\item[-] Step 2: we show that these new bounds are better than those defining $V_A(s)$. It then remains to control $(v_0, v_1, v_2)(s)$. This means that the problem is reduced to the control  of a finite dimensional function $(v_0, v_1, v_2)(s)$ and then we get the conclusion $(i)$ of Proposition \ref{prop:redu}.
\item[-] Step 3: we derive from \eqref{equ:v} differential equations  satisfied by $(v_0, v_1, v_2)(s)$ to show its transversality on $\partial \hat{V}_A(s)$, which yields the conclusion $(ii)$ of Proposition \ref{prop:redu}.
\end{itemize}

\subsubsection*{\textbf{Step 1: A priori estimates on $v(s)$ in $V_A(s)$.}}
\noindent Here, we prepare for the proof of item $(i)$ in Proposition \ref{prop:redu}, which follows if we show that 
$$\left\|\frac{v_-(y,s_1)}{1 + |y|^3}\right\|_{L^\infty} \leq \frac{A}{2s_1^{2 + \eta}} \quad \text{and} \quad \|v_e(s_1)\|_{L^\infty} \leq \frac{A^2}{2s_1^{1/2 + \eta}}.$$
As in \cite{BKnon94} and \cite{MZdm97}, we will make \textit{a priori} estimate on the projections of the Duhamel formulation \eqref{for:vint}, on the negative and exterior part of the solution. The influence of the kernel $\K$ in this formula is very clear. Therefore, it is convenient to give the following result inspired by Bricmont and Kupiainen \cite{BKnon94} which gives the dynamics of the linear operator $\K$:
\begin{lemm}[\textbf{A priori estimates of the linearized operator in the decomposition \eqref{def:decomv}}] \label{lemm:BKespri} For all $\lambda > 0$, there exists $\sigma_0 = \sigma_0(\lambda)$ such that if $\sigma \geq \sigma_0 \geq 1$ and $\vartheta(\sigma)$ satisfies 
\begin{equation}\label{equ:boundpsisigma}
\sum_{m=0}^2|\vartheta_m(\sigma)| + \left\|\frac{\vartheta_-(y,\sigma)}{1+|y|^3}\right\|_{L^\infty} + \|\vartheta_e(\sigma)\|_{L^\infty} < +\infty,
\end{equation}
then, $\theta(s) = \mathcal{K}(s,\sigma)\vartheta(\sigma)$ satisfies for all $s \in [\sigma, \sigma + \lambda]$,
\begin{align}
\left\|\frac{\theta_-(y,s)}{1+|y|^3}\right\|_{L^\infty} &\leq \frac{Ce^{s - \sigma}\left((s - \sigma)^2 + 1\right)}{s}\left(|\vartheta_0(\sigma)| + |\vartheta_1(\sigma)|+\sqrt{s}|\vartheta_2(\sigma)|\right)\nonumber\\
&\qquad + C e^{-\frac{(s - \sigma)}{2}} \left\|\frac{\vartheta_-(y,\sigma)}{1 + |y|^3}\right\|_{L^\infty} + \frac{Ce^{-(s - \sigma)^2}}{s^{3/2}}\|\vartheta_e(\sigma)\|_{L^\infty},\label{equ:boundThe_ne}\\
\|\theta_e(s)\|_{L^\infty} &\leq Ce^{s-\sigma}\left(\sum_{l=0}^2 s^{l/2}|\vartheta_l(\sigma)| + s^{3/2}\left\|\frac{\vartheta_-(y,\sigma)}{1 + |y|^3}\right\|_{L^\infty} \right)\nonumber\\
&\qquad + C e^{-\frac{(s-\sigma)}{p}}\|\vartheta_e(\sigma)\|_{L^\infty}.\label{equ:boundThe_e}
\end{align}
where $C = C(\lambda,K)>0$ ($K$ is given in \eqref{def:chi}), $\vartheta_m,\vartheta_-,\vartheta_e$ and $\theta_m, \theta_-, \theta_e$ are defined by \eqref{de:vbve} and \eqref{def:decomv}.
\end{lemm}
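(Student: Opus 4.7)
The plan is to mimic the approach of Bricmont--Kupiainen \cite{BKnon94} and Merle--Zaag \cite{MZdm97}, treating the potential $\alpha(y,s)$ as a perturbation of the self-adjoint operator $\mathcal{L}$, whose spectral structure is explicit via the Hermite basis $\{\phi_\beta\}$ in \eqref{def:Phim}. The starting point is the Duhamel formula
$$\mathcal{K}(s,\sigma)\vartheta = e^{(s-\sigma)\mathcal{L}}\vartheta + \int_\sigma^s e^{(s-\tau)\mathcal{L}} \alpha(\tau) \mathcal{K}(\tau,\sigma)\vartheta\, d\tau,$$
where the unperturbed semigroup $e^{(s-\sigma)\mathcal{L}}$ has the Mehler-type integral kernel and acts on the eigenspace of $1-m/2$ by multiplication by $e^{(1-m/2)(s-\sigma)}$. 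The two regimes of $\alpha$ recalled right before \eqref{de:vbve}, namely $\alpha \to 0$ in $L^2_\rho$ on the blow-up region and $\alpha \to -p/(p-1)$ on the exterior, suggest analyzing the two decompositions $\theta_-$ and $\theta_e$ by two different mechanisms.

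For the bound \eqref{equ:boundThe_ne} on $\theta_-/(1+|y|^3)$, I would project the Duhamel identity onto the modes $|\beta| \geq 3$. The free evolution then contributes at most $e^{-(s-\sigma)/2}$ applied to $\vartheta_-$, which gives the second term of \eqref{equ:boundThe_ne}. The projections of $\vartheta_0, \vartheta_1, \vartheta_2$ onto the negative modes come only from the perturbative integral, since $e^{(s-\sigma)\mathcal{L}}$ preserves the spectral decomposition; iterating the Duhamel identity once and using the smallness $\|\alpha(s)\chi\|_{L^2_\rho}=\mathcal{O}(1/\sqrt{s})$ from Lemma~\ref{lemm:apEstalp} would yield the factor $((s-\sigma)^2+1)/s$ multiplied by the expected growth $e^{(s-\sigma)(1-m/2)}$, with the $\sqrt s$ on $\vartheta_2$ coming from its marginal eigenvalue $1-2/2=0$ being perturbed upward by $\alpha$. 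The super-exponential factor $e^{-(s-\sigma)^2}/s^{3/2}$ in front of $\|\vartheta_e\|_{L^\infty}$ reflects the Gaussian tail of the Mehler kernel against a datum supported in $\{|y| \geq K\sqrt{s}\}$: a direct pointwise integration of the kernel against $1_{\{|y|\geq K\sqrt{s}\}}$ against the Gaussian weight produces precisely this decay in $s-\sigma$ and the $s^{-3/2}$ from the integration against $\rho$.

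For the exterior bound \eqref{equ:boundThe_e}, I would instead use a pointwise/maximum-principle argument. Writing the equation for $\theta(1-\chi)$ and using $\alpha \leq -p/(p-1)+\epsilon$ uniformly for $|y|\geq K\sqrt s$ (by choosing $K$ large, see \eqref{equ:asymalp}), a Feynman--Kac or direct maximum-principle estimate gives a decay $e^{-(s-\sigma)/(p-1)+\mathcal{O}(\epsilon)(s-\sigma)}$, which is absorbed in $e^{-(s-\sigma)/p}$ after fixing $\epsilon$ small. The contributions of the interior quantities $\vartheta_m$ and $\vartheta_-$ to $\theta_e$ come through the commutator between the evolution and the cutoff $\chi$, and through the evaluation of the polynomial eigenfunctions $\phi_\beta$ and of $\vartheta_-(y,\sigma) \lesssim (1+|y|^3)$ at distances $|y| \lesssim \sqrt{s}$. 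This produces precisely the polynomial weights $s^{l/2}$ and $s^{3/2}$ and the prefactor $e^{s-\sigma}$ coming from the spectrum of $\mathcal{L}$.

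The main obstacle is the careful bookkeeping of the coupling between the interior and exterior parts induced by the time-dependent cutoff $\chi(y,s)$, since $\mathcal{L}+\alpha$ does not commute with multiplication by $\chi$. The commutator produces extra terms of the form $\nabla\chi \cdot \nabla \theta$ and $(\partial_s\chi)\theta$, whose smallness $|\nabla\chi| \lesssim 1/\sqrt{s}$, $|\partial_s \chi|\lesssim 1/s$ must be traded against possible growth of $\theta$; controlling these corrections uniformly on the full interval $[\sigma,\sigma+\lambda]$ is the technically delicate step, but it is handled exactly as in \cite{BKnon94,MZdm97} and does not require any new idea in our setting, since our refined decomposition \eqref{def:decomv} differs from that of \cite{MZdm97} only in the way the mode $m=2$ is centered around $\mathcal{A}/s^2$, which is immaterial at the level of these linear a priori bounds.
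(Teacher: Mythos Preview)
Your overall strategy---treating $\alpha$ as a perturbation of $\mathcal L$ and separating the interior/exterior contributions---matches the paper's, and your treatments of $\theta_e$ and of the $\vartheta_e \to \theta_-$ coupling via the Gaussian tail of the Mehler kernel are correct in spirit. However, the paper does not proceed through the Duhamel iteration you wrote but through the Feynman--Kac representation \eqref{for:kernelK} of the full kernel $\mathcal K(s,\sigma,y,x)$, which gives direct pointwise control on $\mathcal K$ itself (Lemma~\ref{lemm:baskenelK}) rather than only on its action on eigenfunctions. In particular the bound on the low-mode contribution $I$ comes from the expansion $\mathcal K = e^{(s-\sigma)\mathcal L}(1+P_2+P_4)$ of Lemma~\ref{lemm:baskenelK}$(b)$, not from iterating Duhamel.

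The place where your sketch has a real gap is the $\vartheta_- \to \theta_-$ bound. You write that ``the free evolution then contributes at most $e^{-(s-\sigma)/2}$ applied to $\vartheta_-$'', invoking the spectral gap of $\mathcal L$ on modes $|\beta|\geq 3$. But the target is the \emph{pointwise} estimate $\|\theta_-/(1+|y|^3)\|_{L^\infty}$, and the spectral gap is an $L^2_\rho$ statement; the passage from one to the other is not automatic and is in fact the most delicate step in the paper's proof. It is handled there by a three-fold integration by parts against the kernel: the moment conditions $\int x^\beta \vartheta_-(x)\rho(x)\,dx=0$ for $|\beta|\leq 2$ are converted (Lemma~\ref{ap:lemmC3}, Corollary~\ref{ap:coroC4}) into successive ``antiderivatives'' $F^{(-1)},F^{(-2)},F^{(-3)}$ of $e^{-|x|^2/4}\vartheta_-$ with controlled weighted decay, and each integration by parts transfers one $x$-derivative onto the Mehler factor $G(y,x)$, producing a factor $e^{-(s-\sigma)/2}$ from $\nabla_x G$. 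Crucially this is done on the \emph{full} kernel $\mathcal K=G\cdot E$ (with $E$ the Feynman--Kac expectation \eqref{def:apE}), and the $\nabla_x E$ terms that appear are bounded via Gaussian integration by parts on the oscillator measure. In your Duhamel scheme the perturbative piece $\int_\sigma^s e^{(s-\tau)\mathcal L}\alpha(\tau)\mathcal K(\tau,\sigma)\vartheta_-\,d\tau$ involves $\alpha\,\mathcal K\,\vartheta_-$, which no longer has vanishing moments, so the $e^{-(s-\sigma)/2}$ gain is not immediate there; this is precisely why the paper works with the full kernel rather than splitting off the free evolution.
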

\begin{rema} In view of the formula \eqref{for:vint}, we see that Lemma \ref{lemm:BKespri} will play an important role in deriving the new bounds on the components of $v(s)$ and making our proof simpler. This means that, given bounds on the components of $v(\sigma), B(v(\tau)), R(\tau)$, we directly apply Lemma \ref{lemm:BKespri} with $\K(s, \sigma)$ replaced by $\K(s,\tau)$ and then integrate over $\tau$ to obtain estimates on the components of $v$.
\end{rema}
\begin{rema}\label{rema:proLe} Note that the proof of this result was given by Bricmont and Kupiainen \cite{BKnon94} only when $N=1$ for simplicity. Of course, their proof naturally extends to higher dimensions. Since our paper is relevant only when $N \geq 2$ (otherwise, Fermanian and Zaag proved the result in \cite{FZnon00}  when $N = 1$), we felt we should give the proof of this lemma in higher dimensions for the reader's convenience.
\end{rema}
\begin{proof} Let us mention that Lemma \ref{lemm:BKespri} relays mainly on the understanding of the behavior of the kernel $\mathcal{K}(s,\sigma)$. The proof is essentially the same as in \cite{BKnon94}, but the estimates of those paper did not present explicitly the dependence on all the components of $\vartheta(\sigma)$ which is less convenient for our analysis below. Because the proof is long and technical, we leave it to Appendix \ref{ap:pri}. As we wrote in the remark following Lemma \ref{lemm:BKespri}, we give the proof for all dimensions $N \geq 1$, noting that the proof of \cite{BKnon94} is valid also in all dimensions, though the authors give the proof only when $N = 1$ for simplicity.
\end{proof}

We now assume that for some $\lambda > 0$, for each $s \in [\sigma, \sigma + \lambda]$, we have $v(s) \in V_A(s)$ with $\sigma \geq s_0$. Applying Lemma \ref{lemm:BKespri}, we get new bounds on all terms in the right hand side of \eqref{for:vint}, and then on $v$. More precisely, we claim the following:
\begin{lemm}\label{lemm:estallterms} There exists $A_2 > 0$ such that for each $A \geq A_2$, $\lambda^* > 0$, there exists $\sigma_2(A,\lambda^*) > 0$ with the following property: for all $s_0 \geq \sigma_2(A, \lambda^*)$, for all $\lambda \leq \lambda^*$, assume that for all $s \in [\sigma, \sigma + \lambda]$, $v(s) \in V_A(s)$ with $\sigma \geq s_0$, then there exists $C = C(\lambda^*) > 0$ such that for all $s \in [\sigma, \sigma + \lambda]$,\\
$i)\;$ \textbf{(linear term)}
\begin{align*}
\left\|\frac{\vartheta_-(y,s)}{1 + |y|^3}\right\|_{L^\infty} &\leq \frac{C}{s^{2+\eta}} + \frac{C}{s^{2+\eta}}\left(Ae^{-\frac{s - \sigma}{2}} + A^2e^{-(s - \sigma)^2}\right),\\
\|\vartheta_e(s)\|_{L^\infty} &\leq \frac{C}{s^{1/2+\eta}} + \frac{C}{s^{1/2 + \eta}}\left(A e^{s - \sigma} + A^2e^{-\frac{s-\sigma}{p}} \right),
\end{align*}
where 
$$\K(s,\sigma)v(\sigma) = \vartheta(y,s) = \vartheta_0 + \vartheta_1\cdot y + \frac{1}{2}y^T \vartheta_2 \,y - tr(\vartheta_2) + \vartheta_-(y,s) + \vartheta_e(y,s).$$
If $\sigma = s_0$, we assume in addition that $(d_0, d_1, d_2)$ is chosen so that $(v_0, v_1, v_2)(s_0) \in \hat{V}_A(s_0)$. Then we have for all $s \in [s_0, s_0 + \lambda]$,
$$\left\|\frac{\vartheta_-(y,s)}{1 + |y|^3}\right\|_{L^\infty} \leq \frac{C}{s^{2+\eta}}, \quad \|\vartheta_e(s)\|_{L^\infty} \leq \frac{Ce^{s - s_0}}{s^{1/2 + \eta}}.$$ 
$ii)\,$ \textbf{(remaining terms)}
\begin{align*}
\left\|\frac{\beta_-(y,s)}{1 + |y|^3}\right\|_{L^\infty} \leq \frac{C}{s^{2+\eta}}, \quad \|\beta_e(s)\|_{L^\infty} \leq \frac{C}{s^{1/2 + \eta}},
\end{align*}
where 
\begin{align*}
&\int_\sigma^s\K(s,\tau)\left[B(v(\tau)) + (\gamma(\tau) - \alpha(\tau))v(\tau)\right]d\tau \\
&\qquad \qquad = \beta(y,s) = \beta_0 + \beta_1\cdot y + \frac{1}{2}y^T \beta_2\, y - tr(\beta_2) + \beta_-(y,s) + \beta_e(y,s).
\end{align*}
\end{lemm}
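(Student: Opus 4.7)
The plan is to apply the linear-operator estimates of Lemma \ref{lemm:BKespri} to the Duhamel formula \eqref{for:vint}, separately to $\mathcal{K}(s,\sigma)v(\sigma)$ (Part $i$) and to the integrand $\mathcal{K}(s,\tau)[B(v(\tau)) + (\gamma(\tau)-\alpha(\tau))v(\tau)]$ (Part $ii$). The overall framework follows \cite{MZdm97}, but the careful bookkeeping of the new $v_2$-contribution coming from \eqref{equ:conVAnullmode} is the crucial novelty and must be tracked throughout.

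For Part $i$, I would set $\vartheta(\sigma) = v(\sigma)$ and read off from $v(\sigma) \in V_A(\sigma)$ the bounds $|v_0(\sigma)|,\,|v_{1,i}(\sigma)| \leq A/\sigma^{2+\eta}$, $|v_{2,ij}(\sigma)| \leq |a_{ij}|/\sigma^2 + A^2/\sigma^{2+\eta}$, $\|v_-(\sigma)/(1+|y|^3)\|_{L^\infty} \leq A/\sigma^{2+\eta}$ and $\|v_e(\sigma)\|_{L^\infty} \leq A^2/\sigma^{1/2+\eta}$; then plug them into the two inequalities of Lemma \ref{lemm:BKespri}. Since $s - \sigma \leq \lambda^*$, the prefactors $e^{s-\sigma}((s-\sigma)^2+1)$ and $e^{s-\sigma}$ are bounded by $C(\lambda^*)$ and $s \sim \sigma$. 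The delicate term is $\sqrt{s}\,|v_2(\sigma)|/s$ in the $\vartheta_-$ bound: the matrix part $\mathcal{A}/\sigma^2$ produces a contribution of size $C(\mathcal{A})/\sigma^{5/2}$, which is $\leq C/s^{2+\eta}$ precisely because $\eta < 1/2$, once $s_0 \geq \sigma_2(A,\lambda^*,\mathcal{A})$ is taken large enough to absorb the $\mathcal{A}$-dependent constant. The analogous term $s\,|v_2(\sigma)|$ in the $\vartheta_e$ bound gives $C(\mathcal{A})/s \leq C/s^{1/2+\eta}$ by the same restriction. The $v_-$ and $v_e$ contributions carry the stated decay factors $e^{-(s-\sigma)/2}$, $e^{-(s-\sigma)^2}/s^{3/2}$, $e^{s-\sigma}$ and $e^{-(s-\sigma)/p}$ exactly as in Lemma \ref{lemm:BKespri}. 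The refined case $\sigma = s_0$ uses Lemma \ref{lemm:decomdata}(ii) — namely $\|v_-(s_0)/(1+|y|^3)\|_{L^\infty} \leq C/s_0^{5/2}$ and $\|v_e(s_0)\|_{L^\infty} = 0$ — which kills the $A^2 e^{-(s-s_0)/p}$ term entirely and absorbs the exponentially decaying piece into the main $C/s^{2+\eta}$ estimate.

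For Part $ii$, I would view the integrand as a time-$\tau$ source, apply Lemma \ref{lemm:BKespri} with $\sigma$ replaced by $\tau$, and integrate over $\tau \in [\sigma,s]$. From $v(\tau) \in V_A(\tau)$ and decomposition \eqref{def:decomv}, a direct computation analogous to \eqref{equ:Con1for20} yields $\|v(\tau)\|_{L^\infty(\mathbb{R}^N)} \leq C(A,\mathcal{A})/\tau^{1/2+\eta}$. Since $B$ is at least quadratic near $0$ and $\hat{w}$ is bounded by \eqref{est:uhat}, one deduces $|B(v(\tau))| \leq C|v(\tau)|^2 \leq C(A,\mathcal{A})/\tau^{1+2\eta}$ pointwise, with a finer polynomial-in-$y$ bound on $|y| \leq 2K\sqrt{\tau}$ used when projecting onto individual modes. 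The potential difference $\gamma - \alpha$ is controlled directly from \eqref{est:uhat} (via Lemma \ref{lemm:apestR} stated below), the key estimate being $|\hat{w}-\varphi| \leq C/\sqrt{\tau}$, so that $|(\gamma-\alpha)v(\tau)| = O(C(A,\mathcal{A})/\tau^{1+\eta})$ in the inner region and is similarly controlled in the outer region via $\|v_e\|_{L^\infty}$. Decomposing these sources along the spectrum of $\mathcal{L}$, bounding each mode component, applying Lemma \ref{lemm:BKespri} for each $\tau$, and integrating over $\tau \in [\sigma,s]$ (during which $s-\tau \leq \lambda^*$ keeps the exponentials bounded) then produces the announced estimates $\|\beta_-(\cdot,s)/(1+|y|^3)\|_{L^\infty} \leq C/s^{2+\eta}$ and $\|\beta_e(s)\|_{L^\infty} \leq C/s^{1/2+\eta}$.

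The main technical obstacle is the quantitative tracking of the $v_2$-contribution. Because $|v_2(\sigma)|$ is of order $|\mathcal{A}|/\sigma^2$ — strictly larger than the $V_A$-threshold $1/\sigma^{2+\eta}$ — and because Lemma \ref{lemm:BKespri} couples the null modes into the negative and exterior parts with multiplicative factors of $\sqrt{s}$ and $s$ respectively, it is only the strict inequality $\eta < 1/2$ built into Definition \ref{def:V_A}, together with the freedom to choose $s_0$ large depending on $(A,\lambda^*,\mathcal{A})$, that enables us to reabsorb $C(\mathcal{A})/s^{5/2}$ into $C/s^{2+\eta}$. The analogous mode-by-mode bookkeeping on $B(v)$ and $(\gamma-\alpha)v$ — in particular the cross-products between the $\mathcal{A}/\tau^2$ piece of $v$ and its other components inside the quadratic nonlinearity — is routine but must be carried out carefully to preserve the exact powers of $\tau$ in the final bounds.
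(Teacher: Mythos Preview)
Your proposal is correct and follows essentially the same route as the paper: apply Lemma~\ref{lemm:BKespri} with $\vartheta(\sigma)=v(\sigma)$ and the $V_A(\sigma)$-bounds for part~$(i)$, and for part~$(ii)$ feed the componentwise estimates on $B(v)$ and $(\gamma-\alpha)v$ (the content of Lemmas~\ref{lemm:apestBv} and~\ref{lemm:apestR}) into Lemma~\ref{lemm:BKespri} and integrate in~$\tau$. Your explicit tracking of the $\mathcal{A}/\sigma^2$ piece of $v_2$ and the use of $\eta<\tfrac12$ to absorb $C(\mathcal{A})/s^{5/2}$ into $C/s^{2+\eta}$ is exactly the point the paper's terse proof leaves implicit.
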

\begin{proof} $i)$ It immediately follows from the definition of $V_A(\sigma)$ and Lemma \ref{lemm:BKespri}. For part $ii)$, all what we need to do is to substitute the estimates on the components of $B(v)$ and 
$$R(y,s) = (\gamma(y,s) - \alpha(y,s))v(y,s)$$
in Lemma \ref{lemm:apestBv} and Lemma \ref{lemm:apestR} into Lemma \ref{lemm:BKespri}, integrating over $[\sigma,s]$ with respect to $\tau$, and taking $\sigma_2(A,\lambda^*)$ large enough, we then have the conclusion. This ends the proof of Lemma \ref{lemm:estallterms}.
\end{proof}

\subsubsection*{\textbf{Step 2: Deriving conclusion $(i)$ of Proposition \ref{prop:redu}.}}
This step is not new and follows also \cite{MZdm97} and \cite{BKnon94}. We give it for the reader's convenience and for the sake of completeness. Here we use Lemma \ref{lemm:estallterms} in order to derive the conclusion of $(i)$ of Proposition \ref{prop:redu}. Indeed, from equation \eqref{for:vint} and Lemma \ref{lemm:estallterms}, we derive new bounds on $\left\|\frac{v_-(y,s)}{1 + |y|^3} \right\|_{L^\infty}$ and $\|v_e(s)\|_{L^\infty}$, assuming that for all $s \in [\sigma, \sigma +\lambda]$, $v(s) \in V_A(s)$, for $\lambda \leq \lambda^*$ and $\sigma \geq s_0 \geq \sigma_1(A,\lambda^*)$ ($\sigma_1$ is given in Lemma \ref{lemm:estallterms}). The key estimate is to show that for $s = \sigma + \lambda$ (or $s \in [\sigma,\sigma +\lambda]$ if $\sigma = s_0$), these bounds are better than those defining $V_A(s)$, provided that $\lambda \leq \lambda^*(A)$. More precisely, we claim the following proposition, which directly yields item $(i)$ of Proposition \ref{prop:redu}:
\begin{prop}[\textbf{Control of $v(s)$ by $(v_0,v_1,v_2)(s)$ in $\hat{V}_A(s)$}] \label{prop:redu1} There exists $A_4 > 1$ such that for each $A \geq A_4$, there exists $\delta_4(A) > 0$ such that for each $s_0 \geq \delta_4(A)$, we have the following properties:\\
- if $(d_0, d_1, d_2)$ is chosen so that $(v_0,v_1,v_2)(s_0) \in \hat{V}_A(s_0)$, and \\
- if for all $s \in [s_0,s_1]$, $v(s) \in V_A(s)$ for some $s_1 \geq s_0$, then: for all $s \in [s_0,s_1]$,
\begin{equation}\label{equ:redu1}
\left\|\frac{v_-(y,s)}{1 + |y|^3} \right\|_{L^\infty}\leq \frac{A}{2s^{2+\eta}}, \quad \|v_e(s)\|_{L^\infty} \leq \frac{A^2}{2s^{1/2 + \eta}}.
\end{equation}
\end{prop}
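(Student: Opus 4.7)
The plan is to exploit the Duhamel formulation \eqref{for:vint} together with the a priori estimates of Lemma \ref{lemm:estallterms}, and to show that for large enough $A$ and $s_0$, the resulting bounds on $v_-$ and $v_e$ strictly improve upon those appearing in the definition of $V_A(s)$. For any $\sigma \in [s_0,s]$ one writes
\[
v(s) = \K(s,\sigma)v(\sigma) + \int_\sigma^s \K(s,\tau)\bigl[B(v(\tau)) + (\gamma(\tau)-\alpha(\tau))v(\tau)\bigr]\,d\tau,
\]
splits the right-hand side as $\vartheta(s) + \beta(s)$, and projects to obtain $v_- = \vartheta_- + \beta_-$ and $v_e = \vartheta_e + \beta_e$. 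The proof then reduces to choosing $\sigma$ cleverly, plugging in the hypothesis $v(\sigma) \in V_A(\sigma)$, and collecting the bounds of Lemma \ref{lemm:estallterms}.

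For the negative component, one picks $\lambda^*$ sufficiently large (depending on $A$, specifically of order $\sqrt{\log A}$) so that the two contraction factors $Ce^{-\lambda^*/2}$ and $CA^2 e^{-\lambda^{*2}}$ appearing in Lemma \ref{lemm:estallterms}(i) are each at most $A/4$. When $s \in [s_0, s_0+\lambda^*]$, take $\sigma = s_0$ and invoke the sharpened bound $\|\vartheta_-(y,s)/(1+|y|^3)\|_{L^\infty} \leq C/s^{2+\eta}$, which is valid thanks to $(v_0,v_1,v_2)(s_0) \in \hat V_A(s_0)$; when $s > s_0+\lambda^*$, take $\sigma = s-\lambda^*$ and use $v(\sigma) \in V_A(\sigma)$ in the general linear bound, keeping at most $A/(4s^{2+\eta})$ plus $\mathcal{O}(1/s^{5/2})$ contributions from the other modes. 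In either regime, adding the uniform remainder $\|\beta_-(y,s)/(1+|y|^3)\|_{L^\infty} \leq C/s^{2+\eta}$ from Lemma \ref{lemm:estallterms}(ii) yields the desired bound $\leq A/(2s^{2+\eta})$, provided $A$ exceeds an absolute threshold and $s_0$ is large enough to absorb the $1/s^{5/2}$ losses.

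The exterior component is the main obstacle, since the linear term estimate of Lemma \ref{lemm:estallterms}(i) on $\vartheta_e$ contains the growing factor $e^{s-\sigma}$, reflecting the absence of any $L^\infty$ contraction of the heat semigroup on unbounded domains. The saving grace is the competing decay $e^{-(s-\sigma)/p}$ multiplying $\|\vartheta_e(\sigma)\|_{L^\infty}$: by fixing $\lambda^* = \lambda^*(p)$ so large that $C e^{-\lambda^*/p} \leq 1/8$ and then setting $\sigma = \max(s_0,\,s-\lambda^*)$, the growing factor $e^{s-\sigma}$ stays bounded by $e^{\lambda^*}$ while a genuine contraction by $1/8$ is extracted from the exterior norm. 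For $s \leq s_0+\lambda^*$, the potentially dangerous term vanishes because $v_e(s_0) \equiv 0$ by Lemma \ref{lemm:decomdata}, yielding $\|v_e(s)\|_{L^\infty} \leq C(\lambda^*)/s^{1/2+\eta}$. For $s > s_0+\lambda^*$, using $v(\sigma) \in V_A(\sigma)$ and summing in the contributions from all modes of $v(\sigma)$ produces
\[
\|v_e(s)\|_{L^\infty} \leq \frac{A^2}{8\,s^{1/2+\eta}} + \frac{C(\lambda^*,\mathcal{A})(1+A)}{s^{1/2+\eta}},
\]
which is bounded by $A^2/(2s^{1/2+\eta})$ once $A$ is taken large enough for the quadratic term to dominate the linear-in-$A$ and constant losses. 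This is precisely why $V_A(s)$ scales like $A^2$ rather than $A$ for the exterior part: the quadratic headroom is exactly what absorbs the iteration losses.
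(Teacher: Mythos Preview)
Your proposal is correct and follows essentially the same strategy as the paper's proof: split into a short-time regime (take $\sigma = s_0$ and exploit the sharpened initial-data bounds of Lemma~\ref{lemm:decomdata}) and a long-time regime (take $\sigma = s - \lambda^*$ and extract the contraction factors from Lemma~\ref{lemm:estallterms}(i)), then close by choosing $A$ large. The only cosmetic difference is that you treat $v_-$ and $v_e$ with separate step sizes ($\lambda^* \sim \sqrt{\log A}$ for $v_-$, and $\lambda^*$ a constant depending only on $p$ for $v_e$), whereas the paper handles both components simultaneously with two overlapping intervals determined by $\lambda_1 = \tfrac{3}{2}\log A$ and $\lambda_2 = \log(A/8C)$; both bookkeepings lead to the same conclusion.
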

\noindent Indeed, if $v(s_1) \in \partial V_A(s_1)$, then $v_0(s_1), v_1(s_1), v_2(s_1))$ must be in $\partial \hat{V}_A(s_1)$ from the definition of $V_A(s)$ and \eqref{equ:redu1}. This concludes part $(i)$ of Proposition \ref{prop:redu}, assuming Proposition \ref{prop:redu1} holds.

Let us now give the proof of Proposition \ref{prop:redu1} in order to conclude the proof of part $(i)$ of Proposition \ref{prop:redu}.
\begin{proof}[\textbf{Proof of Proposition \ref{prop:redu1}}] Note that the conclusion of this proposition is very similar to Proposition 3.7, pages 157 in \cite{MZdm97}. But for the reader's convenience, we give here their argument.

Let $\lambda_1 \geq \lambda_2$ be two positive numbers which will be fixed in term of $A$ later. It is enough to show that \eqref{equ:redu1} holds in two cases: $s - s_0 \leq \lambda_1$ and $s -s_0 \geq \lambda_2$. In both cases, we use Lemma \ref{lemm:estallterms} and suppose $A \geq A_2 > 0$, $s_0 \geq \max\{\sigma_2(A,\lambda_1), \sigma_2(A,\lambda_2), \sigma_6(A), 1\}$. \\

\noindent \textbf{Case $s - s_0 \leq \lambda_1$}: Since we have for all $\tau \in [s_0,s]$, $v(\tau) \in V_A(\tau)$, we apply Lemma \ref{lemm:estallterms} with $A$ and $\lambda^* = \lambda_1$, and $\lambda = s - s_0$. From \eqref{for:vint} and Lemma \ref{lemm:estallterms}, we have 
$$\left\|\frac{v_-(y,s)}{1 + |y|^3} \right\|_{L^\infty}\leq \frac{C}{s^{2+\eta}}, \quad \|v_e(s)\|_{L^\infty} \leq \frac{Ce^{\lambda_1}}{s^{1/2 + \eta}}.$$
If we fix $\lambda_1 = \frac{3}{2}\log A$ and $A$ large enough, then \eqref{equ:redu1} satisfies. \\

\noindent \textbf{Case $s- s_0 \geq \lambda_2$}: Since we have for all $\tau \in [\sigma, s]$, $v(\tau) \in V_A(\tau)$, we apply Lemma \ref{lemm:estallterms} with $A$, $\lambda = \lambda^* = \lambda_2$, $\sigma = s - \lambda_2$. From \eqref{for:vint} and Lemma \ref{lemm:estallterms}, we have 
\begin{align*}
\left\|\frac{v_-(y,s)}{1 + |y|^3} \right\|_{L^\infty} &\leq \frac{C}{s^{2+\eta}}\left(1 + A e^{-\frac{\lambda_2}{2}} + A^2e^{-\lambda_2^2}\right),\\
\|v_e(s)\|_{L^\infty} & \leq \frac{C}{s^{1/2 + \eta}}\left(1 + Ae^{\lambda_2} + A^2e^{-\frac{\lambda_2}{p}}\right).
\end{align*}
To obtain \eqref{equ:redu1}, it is enough to have $A \geq 4C$ and
\begin{align*}
& C\left(A e^{-\frac{\lambda_2}{2}} + A^2e^{-\lambda_2^2}\right) \leq \frac{A}{4},\\
& C\left(Ae^{\lambda_2} + A^2e^{-\frac{\lambda_2}{p}}\right) \leq \frac{A^2}{4}.
\end{align*}
If we fix $\lambda_2 = \log(A/8C)$ and take $A$ large enough, we then have the conclusion. This completes the proof of Proposition \ref{prop:redu1} and part $i)$ of Proposition \ref{prop:redu} too.
\end{proof}

\subsubsection*{\textbf{Step 3: Deriving conclusion $(ii)$ of Proposition \ref{prop:redu}.}}
We give the proof of $(ii)$ of Proposition \ref{prop:redu} in this step. We aim at proving that when $(v_0(s),v_1(s),v_2(s))$ touches $\partial \hat{V}_A(s)$ at $s = s_1$, it actually leaves $\hat{V}_A$ at $s_1$  for $s_1 \geq s_0$ where $s_0$ will be large enough. In fact, this is a direct consequence of the following lemma:
\begin{lemm}[\textbf{ODE satisfied by the expanding modes}] \label{lemm:evm012} For all $A > 0$, there exists $\sigma_6(A)$ such that for all $s \geq \sigma_6(A)$, $v(s)\in V_A(s)$ implies that 
\begin{align}
&\left|v_0'(s) - v_0(s) \right|\leq \frac{C}{s^3},\label{equ:odev0}\\
\forall i \in \{1, \cdots, N\},\;&\left|v'_{1,i} - \frac{1}{2}\;v_{1,i}(s) \right| \leq \frac{C}{s^3},\label{equ:odev1}\\
\forall i,j \in \{1, \cdots, N\},\;&\left|h_{ij}'(s) + \frac 2s\;h_{ij}(s) \right| \leq \frac{CA}{s^{3+\eta}},\label{equ:odev2}
\end{align}
where $h(s) = v_2(s) - \frac{\mathcal{A}}{s^2}$.
\end{lemm}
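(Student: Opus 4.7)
The plan is to derive each ODE by projecting equation \eqref{equ:v} onto the eigenspaces of $\mathcal{L}$ corresponding to the eigenvalues $1$, $\tfrac{1}{2}$ and $0$. Multiplying the second form of \eqref{equ:v} by $\chi$ defined in \eqref{def:chi}, one obtains a PDE for $v_b = \chi v$ of the form
\begin{equation*}
\partial_s v_b = (\mathcal{L} + \alpha) v_b + \chi B(v) + \chi(\gamma - \alpha) v + \chi_s v + [\chi,\mathcal{L}]\,v,
\end{equation*}
from which we extract the equations for $v_0$, $v_{1,i}$ and $v_{2,ij}$ by applying $P_0$, $P_1$ and integration against $\mathcal{M}$ from \eqref{def:M2}, respectively. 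The cutoff terms $\chi_s v$ and $[\chi,\mathcal{L}]\,v$ are supported on $\{|y|\sim K\sqrt{s}\}$, where $\rho$ is exponentially small, so their contributions to any such projection are $O(e^{-cs})$ and absorbed in the remainders.

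The operator $\mathcal{L}$ contributes the linear terms $v_0$, $\tfrac{1}{2}\,v_{1,i}$ and $0$ on the respective modes, which supplies the leading dynamics for $v_0$ and $v_{1,i}$. The key step is then the analysis of $P_m(\alpha v_b)$ for $m=0,1,2$. Using Lemma \ref{lemm:apEstalp} together with the definition \eqref{def:alpha}, a Taylor expansion of $\alpha$ on the blow-up region $\{|y| \le 2K\sqrt{s}\}$ gives
\begin{equation*}
\alpha(y,s) = \frac{N}{2s} - \frac{|y|^2}{4s} + O\!\left(\frac{1+|y|^4}{s^2}\right) \quad \text{as } s\to+\infty.
\end{equation*}
Combined with the orthogonality relation \eqref{equ:orthre} and the bounds $|v_m(s)|\le A/s^{2+\eta}$ for $m=0,1$, $|v_{2,ij}(s)|\le \|\mathcal{A}\|/s^2 + A^2/s^{2+\eta}$, and $|v_-(y,s)|\le A(1+|y|^3)/s^{2+\eta}$ inherited from $v(s)\in V_A(s)$, this shows that $P_m(\alpha v_b)=O(1/s^3)$ for $m=0,1$, whereas for $m=2$ the combined $\mathcal{L}+\alpha$ contribution produces
\begin{equation*}
v_{2,ij}'(s) = -\frac{2}{s}\,v_{2,ij}(s) + O\!\left(\frac{A}{s^{3+\eta}}\right),
\end{equation*}
the $-\tfrac{2}{s}$ coefficient arising from the leading $N/(2s)-|y|^2/(4s)$ part of $\alpha$ tested against $\mathcal{M}_{ij}(y)\phi_\beta(y)$ with $|\beta|=2$.

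The remaining source terms are estimated via Lemma \ref{lemm:apestBv} for $\chi B(v)$, which is quadratic in $v$ and contributes $O(1/s^{3+\eta})$ to each projection, and via Lemma \ref{lemm:apestR} together with \eqref{est:uhat} for $\chi(\gamma-\alpha)v$, which gives $O(1/s^{5/2+\eta})$ thanks to $\|\gamma(\cdot,s)-\alpha(\cdot,s)\|_{L^\infty}=O(1/\sqrt{s})$. This yields \eqref{equ:odev0} and \eqref{equ:odev1} directly. For \eqref{equ:odev2}, we substitute $v_{2,ij} = h_{ij} + a_{ij}/s^2$ into the $v_2$-ODE above: since $\tfrac{d}{ds}(a_{ij}/s^2) = -2a_{ij}/s^3 = -\tfrac{2}{s}\cdot (a_{ij}/s^2)$, an exact cancellation occurs, leaving $h_{ij}'(s) + \tfrac{2}{s}\,h_{ij}(s) = O(A/s^{3+\eta})$, as claimed.

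The main obstacle is the precise extraction of the $-\tfrac{2}{s}$ factor in the null-mode projection of $\alpha v_b$. One must show both that the linear-in-$v_{2}$ coefficient is \emph{exactly} $-\tfrac{2}{s}$ (so that the shift by $\mathcal{A}/s^2$ cancels the inhomogeneity in the $h$-equation) and that the resulting error is genuinely $O(A/s^{3+\eta})$ rather than $O(A^2/s^{3+\eta})$ (which would dominate $h = v_2 - \mathcal{A}/s^2$ and prevent the topological argument in Proposition \ref{prop:equivTheo1}). This requires a careful use of the orthogonality \eqref{equ:orthre} between the Hermite-type eigenfunctions $\phi_\beta$ of $\mathcal{L}$, in order to track which subleading pieces of $\alpha$ can couple different modes.
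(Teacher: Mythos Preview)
Your overall strategy matches the paper's: project \eqref{equ:v} onto the low eigenspaces, use the expansion of $\alpha$ from Lemma~\ref{lemm:apEstalp} to extract the $-\tfrac{2}{s}v_{2}$ term, and control the cross-mode contributions via orthogonality. The identification of the exact $-\tfrac{2}{s}$ coefficient and the cancellation after the shift $v_{2}=h+\mathcal{A}/s^{2}$ are also handled as in the paper.

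There is, however, a genuine gap in your treatment of the corrective term $\chi(\gamma-\alpha)v$. You invoke $\|\gamma(\cdot,s)-\alpha(\cdot,s)\|_{L^\infty}=O(1/\sqrt{s})$ (which is essentially \eqref{est:uhat}) and conclude that the projection contributes $O(1/s^{5/2+\eta})$. Since $\eta<\tfrac12$, this is \emph{larger} than the $C/s^{3}$ required in \eqref{equ:odev0}--\eqref{equ:odev1} and, more critically, much larger than the $CA/s^{3+\eta}$ needed in \eqref{equ:odev2}. With an error of size $1/s^{5/2+\eta}$ the transversality argument for $h_{ij}$ collapses: at the exit time one would need $C/s_{1}^{5/2+\eta}<\eta A^{2}/s_{1}^{3+\eta}$, which fails for large $s_{1}$.

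The fix is that the crude $L^\infty$ bound on $\gamma-\alpha$ is not the right input. What Lemma~\ref{lemm:apestR} actually uses (and what the paper exploits directly in \eqref{est:tmpq211}) is the \emph{pointwise} estimate
\[
|(\gamma(y,s)-\alpha(y,s))\chi(y,s)|\le \frac{C\log s}{s^{2}}(1+|y|^{3}),
\]
valid because $\hat w-\varphi$ lies in the shrinking set $\tilde V_{A}(s)$ from the construction of $\hat u$ (Appendix~\ref{ap:B}), not merely because of \eqref{est:uhat}. Combined with $|v(y,s)|\le C(1+|y|^{3})/s^{2}$ from $v(s)\in V_{A}(s)$, this yields $\big|\int(\gamma-\alpha)v\chi\mathcal{M}\rho\,dy\big|\le C\log s/s^{4}$, which is comfortably $O(1/s^{3+\eta})$. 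Once you replace your $O(1/s^{5/2+\eta})$ by this sharper bound, the rest of your outline goes through exactly as in the paper.
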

\begin{rema} In comparison with \cite{MZdm97}, we have a new estimate, namely \eqref{equ:odev2}, which will be used to prove the outgoing transverse crossing property on $v_{2,ij}$.
\end{rema}

Let us first derive the conclusion $(ii)$ of Proposition \ref{prop:redu} from Lemma \ref{lemm:evm012}, then we will prove it later. From item $(i)$ of Proposition \ref{prop:redu}, we know that  
\begin{equation}\label{equ:v012tmpode}
v_0(s_1) = \frac{\epsilon A}{s_1^{2+\eta}}, \quad v_{1,i}(s_1) = \frac{\epsilon A}{s_1^{2+\eta}} \quad \text{or}\quad h_{ij}(s_1) = \frac{\epsilon A^2}{s_1^{2 + \eta}},
\end{equation}
for some $\epsilon \in \{-1,1\}$ and $i,j \in \{1,\cdots, N\}$. In order to show that $(v_0(s), v_1(s), v_2(s))$ leaves $\hat{V}_A(s)$ at $s_1$ for $s_1 \geq s_0$, it is enough to show that if \eqref{equ:v012tmpode} holds, then (respectively)
\begin{equation}\label{equ:f2co}
\epsilon\frac{d}{ds}v_0(s_1) > \frac{d}{ds}\left(\frac{A}{s^{2+\eta}}\right)(s_1), \quad \epsilon\frac{d}{ds}v_{1,i}(s_1) > \frac{d}{ds}\left(\frac{ A}{s^{2+\eta}}\right)(s_1),
\end{equation}
or 
\begin{equation}\label{equ:convtileode}
\epsilon\frac{d}{ds}h_{ij}(s_1) > \frac{d}{ds}\left(\frac{A^2}{s^{2+ \eta}}\right)(s_1).
\end{equation}

If $v_0(s_1)$ or $v_{1,i}(s_1)$ touches the boundary of the interval, say for example, when $v_{1,i}(s_1) = \frac{A^2}{s_1^{2+\eta}}$, then we write from \eqref{equ:odev1}, 
$$v_{1,i}'(s_1) \geq \frac{1}{2}v_{1,i}(s_1) - \frac{C}{s_1^3} \geq \frac{A/2 - C}{s_1^{2+\eta}} \geq 0 > -\frac{(2+\eta)A}{s_1^{3+\eta}} = \frac{d}{ds} \left(\frac{A}{s^{2+\eta}}\right)(s_1),$$
provided that $A \geq 2C$. Now, if $h_{ij}(s_1) = \frac{A^2}{s_1^{2 + \eta}}$, then we write from \eqref{equ:odev2} 
\begin{align*}
h_{ij}'(s_1) \geq -\frac{2}{s_1}h_{ij}(s_1) - \frac{CA}{s_1^{3+\eta}} &= -\frac{2A^2}{s_1^{3 + \eta}} - \frac{CA}{s_1^{3 + \eta}}\\
& > -\frac{(2 + \eta)A^2}{s_1^{3 + \eta}} = \frac{d}{ds} \left(\frac{A^2}{s^{2+\eta}}\right)(s_1),
\end{align*}
provided that $A \geq \frac{2C}{\eta}$. All the other cases follow similarly. This concludes part $(ii)$ of Proposition \ref{prop:redu}, assuming Lemma \ref{lemm:evm012} holds.\\

\noindent Let us now give the proof of Lemma \ref{lemm:evm012} to complete the proof of Proposition \eqref{prop:redu}.
\begin{proof}[\textbf{Proof of Lemma \ref{lemm:evm012}}] Estimates \eqref{equ:odev0}, \eqref{equ:odev1} and \eqref{equ:odev2} follow in the same way, though \eqref{equ:odev2} is more delicate. Therefore, we only prove \eqref{equ:odev2}, and refer the interested reader to \cite{MZdm97} (precisely in page 158-159) where a proof similar to the proof of \eqref{equ:odev0} and \eqref{equ:odev1} can be found. In order to prove \eqref{equ:odev2}, we consider $A > 0$ and $s > 0$ which will be taken large in the following and assume that $v(s) \in V_{A}(s)$. 

Let us recall from \eqref{equ:v} the equation satisfied by $v$,
\begin{equation}\label{equ:q}
\partial_s v = (\mathcal{L} + \alpha(y,s))v + B(v) + (\gamma(y,s) - \alpha(y,s))v.
\end{equation}
Note that we have no rest term in equation \eqref{equ:q}, since we linearize here around a solution of \eqref{equ:w}, namely $\hat{w}$, unlike the equation of \cite{MZdm97} where the authors linearize equation \eqref{equ:w} around $f\left(\frac{y}{\sqrt{s}}\right)$, which only an approximate solution of \eqref{equ:w}. This absence of rest term in our setting is the key to \eqref{equ:odev2}, which should be viewed as a refined version of the equation satisfied by $v_{2,ij}$ in \cite{MZdm97} reading as
$$ \left|v_{2,ij}'(s) + \frac{2}{s}v_{2,ij}\right| \leq \frac{C}{s^3},$$
(to derive this equation, the reader should repeat steps at pages 158-159 in \cite{MZdm97} with $m = 2$).

Accordingly, we claim that estimate \eqref{equ:odev2} directly follows from the following inequality:
\begin{equation}\label{equ:tmpv2ij}
\forall i,j \in \{1, \cdots, N\}, \; \left|v_{2,ij}'(s) + \frac{2}{s}\;v_{2,ij}(s) \right| \leq \frac{CA}{s^{3+\eta}}.
\end{equation}
Indeed, since $v_{2}(s) = h(s) + \frac{\mathcal{A}}{s^2}$, we directly obtain \eqref{equ:odev2} by a simple substitution.\\

Let us now focus on the derivation of \eqref{equ:tmpv2ij}. We multiply equation \eqref{equ:q} by $\chi(y,s)\mathcal{M}(y)\rho(y)$, where $\rho$ and $\mathcal{M}$ is introduced in \eqref{def:rho} and \eqref{def:M2}, to get
\begin{equation}\label{est:tmpq0}
\int_{\RN} v_s\chi\mathcal{M}\rho dy = \int_{\RN} \left[\mL v + \alpha v + B(v) + (\gamma - \alpha)v\right] \chi\mathcal{M}\rho dy.
\end{equation}
Arguing as in \cite{MZdm97} (see page 158), we derive for $s$ large
\begin{equation}\label{est:tmpq1}
\left|\int_{\RN} v_s\chi\mathcal{M}\rho dy - \frac{d v_2}{ds} \right| + \left|\int_{\RN} \mL v \chi\mathcal{M}\rho dy\right| \leq Ce^{-s}.
\end{equation}
Recall from Lemma \ref{lemm:apestBv} that $|\chi(y,s) B(v(y,s))| \leq C|v(y,s)|^2$. Hence, 
$$\left|\int_{\RN}B(v)\chi\mathcal{M}\rho dy \right| \leq C\int_{\RN}|v|^2 (1+ |y|^2)\rho dy.$$
Since $v(s) \in V_A(s)$, we have by Definition \ref{def:V_A},
\begin{equation}\label{est:vinVA}
\forall y \in \RN, \quad |v(y,s)| \leq \frac{C}{s^2}(1 + |y|^3),
\end{equation}
Hence
\begin{equation} \label{est:tmpq2}
\left|\int_{\RN}B(v)\chi\mathcal{M}\rho dy \right| \leq \frac{C}{s^4} \int_{\RN} (1 + |y|^8) \rho dy \leq \frac{C}{s^4}.
\end{equation}
From the proof of Lemma \ref{lemm:apestR}, we know that 
$$\forall y \in \RN, \quad |(\gamma(y,s) - \alpha(y,s))\chi(y,s)| \leq \frac{C\log s}{s^2}(1 + |y|^3).$$
This estimate together with \eqref{est:vinVA} yields
\begin{equation}\label{est:tmpq211}
\left|\int_{\RN}(\gamma - \alpha)v\chi\mathcal{M}\rho dy \right| \leq \frac{C\log s}{s^4} \int_{\RN} (1 + |y|^8) \rho dy \leq \frac{C\log s}{s^4}.
\end{equation}
From Lemma \ref{lemm:apEstalp}, we write 
$$
\int_{\RN} \alpha v\chi\mathcal{M}\rho dy = -\frac{1}{4s}\int_{\RN}(|y|^2 - 2N)v\chi\mathcal{M}\rho dy + \int_{\RN} \tilde{\alpha} v\chi\mathcal{M}\rho dy,
$$
where 
$$|\tilde{\alpha}(y,s)| \leq \frac{C}{s^2}(|y|^4 + 1), \quad \forall y \in \RN.$$
Using this estimate together with \eqref{est:vinVA}, we derive 
$$\left|\int_{\RN} \tilde{\alpha} v\chi\mathcal{M}\rho dy \right| \leq \frac{C}{s^4}\int_{\RN}(1 + |y|^7) \rho dy \leq \frac{C}{s^4}.$$
It remains to estimate 
$$R(s) = -\frac{1}{4s}\int_{\RN}(|y|^2 - 2N)v\chi\mathcal{M}\rho dy = -\frac{1}{4s}\int_{\RN} \left(\sum_{k=1}^N \phi_2(y_k)\right)v \chi \mathcal{M} \rho dy,$$
where $\phi_2$ is defined in \eqref{def:phik}.\\
Since $v(y,s)\chi(y,s) = v_b(y,s) = \sum_{\ell = 0}^{+\infty}P_\ell(v_b)(y,s)$ from \eqref{de:vbve} and \eqref{def:decomv}, we get
$$R(s) =  -\frac{1}{4s} \sum_{k = 1}^N \sum_{\ell = 0}^{+\infty} \int_{\RN} P_\ell(v_b)(y,s) \phi_2(y_k)\mathcal{M}(y) \rho(y)dy.$$
Note that for  $\ell \geq 5$ and for all $k \in \{1, \cdots, N\}$, 
$$\int_{\RN} P_\ell v_b(y,s) \mathcal{M}_{ij}(y)\phi_2(y_k) \rho(y)dy = 0$$
because of the orthogonality relation \eqref{equ:orthre}. Therefore, 
$$R(s) = -\frac{1}{4s} \sum_{k = 1}^N \sum_{\ell = 0}^4 \int_{\RN} P_\ell(v_b)(y,s) \phi_2(y_k)\mathcal{M}(y) \rho(y)dy.$$
By straightforward computations, we obtain for $\ell = 2$,
$$\sum_{k = 1}^N \int_{\RN} P_2(v_b)(y,s) \phi_2(y_k)\mathcal{M}(y) \rho(y)dy = 8v_2(s).$$
For $\ell = 0, 1, 3, 4$, we see from Definition \ref{def:V_A} that since $v(s) \in V_A(s)$, then $|v_0(s)| + |v_1(s)| + |v_3(s)| + |v_4(s)| \leq \frac{CA}{s^{2 + \eta}}$;  hence,  $\left|\int_{\RN} P_\ell(v_b)(y,s)\phi_2(y_k)\mathcal{M}(y)\rho(y)dy\right| \leq \frac{CA}{s^{2 + \eta}}$. Thus,
\begin{equation}\label{est:tmpq3}
\left|R(s) + \frac{2}{s}v_2(s)\right| \leq \frac{CA}{s^{3 + \eta}}.
\end{equation}
Substituting estimates \eqref{est:tmpq1}, \eqref{est:tmpq2}, \eqref{est:tmpq211} and \eqref{est:tmpq3} into \eqref{est:tmpq0}, we obtain \eqref{equ:tmpv2ij}. This concludes the proof of Lemma \ref{lemm:evm012} and Proposition \ref{prop:redu} as well. 
\end{proof}

\section{Uniform boundedness up to blow-up of the difference between a solution having the stable profile and a particular constructed solution.}\label{sec:3}
\noindent This section is devoted to the proof of Theorem \ref{theo:2} and Corollary \ref{coro:3}.  Clearly, Corollary \ref{coro:3} directly follows form Theorem \ref{theo:2}. Therefore, we only prove Theorem \ref{theo:2}. Our approach is identical to what done in \cite{FZnon00}. Therefore, we shall refer to \cite{FZnon00} for most of the details and only sketch the main steps of the proof. 
\begin{proof}[Proof of Theorem \ref{theo:2}] Consider $u$ in $\mathbb{B}_{0,T}$ ($\mathbb{B}_{0,T}$ has been introduced in Definition \ref{def:setBaT}) and $\hat{u}$ in $\mathbb{B}'_{0,T}$ is the given radially symmetric and decreasing solution to equation \eqref{equ:problem}. We aim in this section at choosing a particular $\mathcal{A} \in \mathbb{M}_N(\R)$ such that the difference $(T-t)^\frac{1}{p-1}|u(x,t) - \bar{u}_{\mathcal{A}}(x,t)|$ reaches significantly small error terms of order $(T-t)^\lambda$ for some $\lambda > 0$, where $\bar{u}_{\mathcal{A}}(x,t) = u_{\mathcal{A}}(x, t  + T_\mathcal{A} - T) \in \mathbb{B}'_{0,T}$ and $u_{\mathcal{A}}(x,t) \in \mathbb{B}'_{0,T_{\mathcal{A}}}$ is the solution to equation \eqref{equ:problem} constructed in Theorem \ref{theo:1}. The proof will be done through the similarity variables setting \eqref{def:simivars} and we proceed in three steps:
\begin{itemize}
\item[-] \textbf{Step 1}: We work in the $L^2_\rho$ space and show that up to a particular choice of $\mathcal{A}$ in $\mathbb{M}_N(\R)$, the difference $(\T[u](y,s) - \T[\bar{u}_{\mathcal{A}}](y,s))$ in $L^2_\rho$ goes to zero exponentially. This yields an estimate on the difference uniformly for $y$ in compact sets and complete the proof of item $(i)$ in Theorem \ref{theo:2}.
\item[-] \textbf{Step 2}:  We extend the previous convergence from compact sets to larger sets $|y|\leq K\sqrt{s}$, \textit{i.e.} the blow-up region where $|x| \leq K\sqrt{(T-t)|\log (T-t)|}$ after the change \eqref{def:simivars}, thanks to the transport effect of the term $-\frac{1}{2}y\cdot \nabla$ in the definition \eqref{def:opL} of $\mathcal{L}$.
\item[-] \textbf{Step 3}: We use the information on the edge of the blow-up region, \textit{i.e.} when $|x| = K\sqrt{(T-t)|\log (T-t)|}$ as initial data to solve the ODE $u' = u^p$, which gives estimates in the outer region where $\epsilon_0 \geq |x| \geq K\sqrt{(T-t)|\log(T-t)|}$ for some $\epsilon_0  > 0$, thanks to a uniform ODE comparison result for equation \eqref{equ:problem}. Then, gathering the previous information, we  obtain the conclusion of Theorem \ref{theo:2}.
\end{itemize}

\noindent \textbf{Step 1: Exponential decay in $L^2_\rho$ of $(\T[u] - \T[\bar{u}_{\mathcal{A}}])$.}

We prove item $(i)$ in Theorem \ref{theo:2} here. Since the formulation is the same as the one done in \cite{FZnon00}, we therefore follow in extent the strategy of \cite{FZnon00} and focus on the novelties. The general idea is that we first find an equivalent of the difference $(\T[u] - \T[\hat{u}])$ in $L^2_\rho$ through the dynamics of the linearized operator $\mathcal{L}$ defined in \eqref{def:opL}, which yields the fact that the mode of the eigenvalue $1 - \frac{k_0}{2}$ of $\mathcal{L}$ for some $k_0 \geq 2$ is dominant. Then, we replace $\hat{u}$ by $\bar{u}_{\mathcal{A}}$ with a particular choice of $\mathcal{A}$ such that the case when the null mode ($k_0 =2$) is dominant is excluded, hence a negative mode ($k_0 \geq 3$) is dominant which yields the exponential decay of the difference in $L^2_\rho$. More precisely, we claim the following proposition:
\begin{prop}[\textbf{Exponent decay of the difference in $L^2_\rho$}] \label{prop:ExpdecayL2rho} Consider $u \in \mathbb{B}_{0,T}$ and $\hat{u} \in \mathbb{B}'_{0,T}$, where $\hat{u}$ is the given radially symmetric and decreasing solution to equation \eqref{equ:problem}, then, there exists a matrix $\mathcal{A} \in \mathbb{M}_N(\R)$ such that 
\begin{equation}\label{est:prop_ExdA}
\|\T[u](s) - \T[\bar{u}_{\mathcal{A}}](s)\|_{L^2_\rho(\RN)} = \mathcal{O}\left(\frac{e^{-s/2}}{s^3}\right) \quad \text{as} \quad s \to +\infty.
\end{equation}
where 
\begin{equation}\label{def:ubarA}
\bar{u}_{\mathcal{A}}(x,t) = u_{\mathcal{A}}(x, t + T_{\mathcal{A}} - T)
\end{equation}
and $u_{\mathcal{A}}(x,t) \in \mathbb{B}'_{0,T_{\mathcal{A}}}$ is the solution to \eqref{equ:problem} constructed in Theorem \ref{theo:1}.
\end{prop}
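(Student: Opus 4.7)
The key observation is that the dichotomy from \cite{FZnon00} recalled around \eqref{equ:case1FZ}--\eqref{equ:case2FZ} is valid not only with $\hat{u}$ as reference solution, but with any $v\in \mathbb{B}'_{0,T}$ in its place. The strategy is thus to use Theorem \ref{theo:1} to manufacture a second reference solution $\bar{u}_{\mathcal{A}} \in \mathbb{B}'_{0,T}$ whose null-mode $1/s^2$ correction matches exactly the one of $u$ relative to $\hat u$. Reapplying the dichotomy a second time with $\bar{u}_{\mathcal{A}}$ as reference, the first case is excluded by construction, forcing the exponential decay of the second case.

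Concretely, I first apply the classification of \cite{FZnon00} to the pair $(u,\hat{u})$. If Case 2 \eqref{equ:case2FZ} holds, the conclusion \eqref{est:prop_ExdA} is immediate by taking $\mathcal{A} = 0$ (so that $T_{\mathcal{A}}=T$, $u_{\mathcal{A}}=\hat{u}$ and $\bar{u}_{\mathcal{A}}=\hat u$). Otherwise Case 1 \eqref{equ:case1FZ} holds with some $\mathcal{B}=\mathcal{B}(u,\hat{u})\in\mathbb{M}_N(\mathbb{R})\setminus\{0\}$, and I set $\mathcal{A}:=\mathcal{B}$. Theorem \ref{theo:1} then produces $u_{\mathcal{A}} \in \mathbb{B}'_{0,T_{\mathcal{A}}}$ satisfying \eqref{equ:theo1refined}. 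A direct computation from \eqref{def:simivars} and \eqref{def:ubarA} (the shift $t\mapsto t+T_{\mathcal{A}}-T$ exactly compensates the change of blow-up time in $s=-\log(T-t)$) gives the fundamental identity
$$\T[\bar{u}_{\mathcal{A}}](y,s) = \T[u_{\mathcal{A}}](y,s),$$
so that $\bar{u}_{\mathcal{A}}\in\mathbb{B}'_{0,T}$ and \eqref{equ:theo1refined} reads
$$\T[\bar{u}_{\mathcal{A}}](y,s)-\T[\hat{u}](y,s) = \tfrac{1}{s^2}\bigl(\tfrac{1}{2}y^T\mathcal{A}y - tr(\mathcal{A})\bigr) + o(1/s^2) \quad \text{in } L^2_\rho.$$
Subtracting this from the Case 1 expansion of $\T[u]-\T[\hat{u}]$ and using $\mathcal{A}=\mathcal{B}$ yields
$$\T[u](y,s)-\T[\bar{u}_{\mathcal{A}}](y,s) = o(1/s^2) \quad \text{in } L^2_\rho.$$

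I then reapply the classification of \cite{FZnon00}, this time to the pair $(u,\bar{u}_{\mathcal{A}})$, which is legitimate since $\bar{u}_{\mathcal{A}} \in \mathbb{B}'_{0,T}$. If Case 1 held, one would obtain a matrix $\tilde{\mathcal{B}}\neq 0$ with $\T[u]-\T[\bar{u}_{\mathcal{A}}] = \tfrac{1}{s^2}\bigl(\tfrac{1}{2}y^T\tilde{\mathcal{B}}y - tr(\tilde{\mathcal{B}})\bigr) + o(1/s^2)$; projecting onto the null eigenspace of $\mathcal{L}$ (spanned by the $\phi_\beta$ with $|\beta|=2$) and using the $o(1/s^2)$ bound above forces $\tilde{\mathcal{B}}=0$, a contradiction. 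Hence Case 2 must hold, which is exactly \eqref{est:prop_ExdA}. The main subtlety of the argument is the time-translation identity $\T[\bar{u}_{\mathcal{A}}](\cdot,s) = \T[u_{\mathcal{A}}](\cdot,s)$: without it one cannot transfer the refined estimate \eqref{equ:theo1refined} from $u_{\mathcal{A}}$ (which blows up at $T_{\mathcal{A}}$) to $\bar{u}_{\mathcal{A}}$ (which blows up at $T$), since the two expansions \emph{a priori} live in different self-similar frames; everything else reduces to a clean double application of the already-available dichotomy.
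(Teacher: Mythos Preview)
Your proposal is correct and follows essentially the same approach as the paper's own proof: apply the dichotomy of \cite{FZnon00} (Proposition \ref{prop:apMZ00}) to the pair $(u,\hat u)$, use Theorem \ref{theo:1} with the resulting matrix to produce $\bar u_{\mathcal A}\in\mathbb{B}'_{0,T}$ whose $1/s^2$ null-mode matches that of $u$, subtract to get $o(1/s^2)$, and reapply the dichotomy to $(u,\bar u_{\mathcal A})$ to force Case 2. Your write-up is in fact slightly more explicit than the paper's in two places: you spell out the time-translation identity $\T[\bar u_{\mathcal A}](\cdot,s)=\T[u_{\mathcal A}](\cdot,s)$ that justifies transferring \eqref{equ:theo1refined} to $\bar u_{\mathcal A}$, and you handle the degenerate case $\mathcal A=0$ separately, both of which the paper leaves implicit.
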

\begin{proof} Applying Proposition \ref{prop:apMZ00} to $u$ and $\hat{u}$, we have\\
- either there is a matrix $\mathcal{A}(u,\hat{u}) \in \mathbb{M}_N(\R)$, $\mathcal{A}(u, \hat{u}) \ne 0$ such that 
\begin{equation}\label{equ:estTmpFZ1}
\T[u](y,s) - \T[\hat{u}](y,s) = \frac{1}{s^2}\left(\frac{1}{2}y^T \mathcal{A}y - tr (\mathcal{A}) \right) + o\left(\frac{1}{s^2}\right) \quad \text{in} \; L^2_\rho \;\;\text{as} \;\; s \to +\infty,
\end{equation}
- or there is a constant $C > 0$ such that  for $s$ large,
\begin{equation}\label{equ:estTmpFZ2}
\left\|\T[u](s) - \T[\hat{u}](s)\right\|_{L^2_\rho} \leq \frac{Ce^{-s/2}}{s^3}.
\end{equation}
Applying Theorem \ref{theo:1} with $\mathcal{A} = \mathcal{A}(u, \hat{u})$, we get the existence of a solution $u_\mathcal{A} \in \mathbb{B}'_{0,T_{\mathcal{A}}}$ to equation \eqref{equ:problem} such that 
\begin{equation}\label{equ:estConth1}
\T[u_{\mathcal{A}}](y,s) - \T[\hat{u}](y,s) = \frac{1}{s^2}\left(\frac{1}{2}y^T \mathcal{A}y - tr (\mathcal{A}) \right) + o\left(\frac{1}{s^2}\right) \quad \text{in} \; L^2_\rho \;\; \text{as}\;\; s \to +\infty.
\end{equation}
Note that \eqref{equ:estConth1} is also true when replacing $u_\mathcal{A}$ by $\bar{u}_{\mathcal{A}}$ defined in \eqref{def:ubarA} by the translation invariance of equation \eqref{equ:problem}. Thus, we directly obtain from \eqref{equ:estConth1} and \eqref{equ:estTmpFZ1},
\begin{equation}\label{equ:excludedcas2}
\T[u](y,s) - \T[\bar{u}_{\mathcal{A}}](y,s) =  o\left(\frac{1}{s^2}\right) \quad \text{in} \; L^2_\rho \;\; \text{as} \;\; s \to +\infty.
\end{equation}
Since $\bar{u}_{\mathcal{A}} \in \mathbb{B}_{0,T}$, an alternative application of Proposition \ref{prop:apMZ00} to $u$ and $\bar{u}_{\mathcal{A}}$ also yields \eqref{equ:estTmpFZ1} and \eqref{equ:estTmpFZ2} with $\T[\hat{u}]$ replaced by $\T[\bar{u}_{\mathcal{A}}]$. However, the case \eqref{equ:estTmpFZ1} is excluded by \eqref{equ:excludedcas2}. This concludes the proof of Proposition \ref{prop:ExpdecayL2rho}.
\end{proof}
Since standard parabolic regularity estimates show that \eqref{est:prop_ExdA} also holds in $L^\infty(|y| \leq R)$ for any $R >0$, this concludes the proof of item $(i)$ in Theorem \ref{theo:2}.\\

\noindent \textbf{Step 2: $L^\infty$ estimate in the blow-up region $|y| \leq K\sqrt{s}$.}

In this step, we use the $L^2_\rho$ estimate on the difference $(\T[u] - \T[\bar{u}_{\mathcal{A}}])$ given in Proposition \ref{prop:ExpdecayL2rho} to extend the uniform estimate of the difference on compact sets $|y| \leq K$ to larger sets $|y| \leq K\sqrt{|\log (T-t)|(T-t)}$. Our technique is the same as in \cite{FZnon00} where the authors followed the ideas of \cite{MZgfa98} and \cite{VELcpde92} to estimate the effect of the convective term $-\frac{y}{2}\cdot \nabla $ in $L^q_\rho$ spaces with $q > 1$. Therefore, we only sketch the proof and refer to \cite{FZnon00} for details. We claim the following proposition:
\begin{prop}[\textbf{$L^\infty$ estimate of the difference in the blow-up region}]\label{prop:extcon} For all $K > 0$, there exist $s_0'(\mathcal{A}) \in \mathbb{R}$ and $C = C(K) >0$, such that
\begin{itemize}
\item[(i)] For all $s \geq s_0'$ and for all $|y| \leq K\sqrt{s}$,
$$|\T[u](y,s) - \T[\bar{u}_{\mathcal{A}}](y,s)| \leq C \frac{e^{-s/2}}{s^{3/2}}.$$
\item[(ii)] For all $t \in \left[T - e^{-s_0'}, T\right)$ and for all $x \in B(0, K\sqrt{|\log (T-t)|(T-t)})$, 
$$|u(x,t) - \bar{u}_{\mathcal{A}}(x,t)| \leq C(K)\frac{(T-t)^{\frac{1}{2} - \frac{1}{p-1}}}{|\log(T-t)|^{3/2}}.$$
\end{itemize}
\end{prop}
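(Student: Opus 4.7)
Part (ii) is a direct consequence of part (i) via the change of variables \eqref{def:simivars}: the region $|y|\leq K\sqrt{s}$ corresponds to $|x|\leq K\sqrt{(T-t)|\log(T-t)|}$, and the identity $\T[u](y,s)-\T[\bar{u}_{\mathcal{A}}](y,s)=(T-t)^{\frac{1}{p-1}}(u(x,t)-\bar{u}_{\mathcal{A}}(x,t))$ converts an estimate of order $e^{-s/2}/s^{3/2}$ into one of order $(T-t)^{1/2-1/(p-1)}/|\log(T-t)|^{3/2}$. So I focus on part (i).

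Set $U(y,s)=\T[u](y,s)-\T[\bar{u}_{\mathcal{A}}](y,s)$. Subtracting the two copies of equation \eqref{equ:w}, $U$ solves the linear equation $\partial_s U=(\mathcal{L}+W(y,s))U$, where
\[
W(y,s)=-\frac{p}{p-1}+p\int_0^1\bigl|\T[\bar{u}_{\mathcal{A}}](y,s)+\tau U(y,s)\bigr|^{p-1}d\tau
\]
is uniformly bounded on $\RN\times[s_0,\infty)$, since both $\T[u]$ and $\T[\bar{u}_{\mathcal{A}}]$ are uniformly bounded (being in $\mathbb{B}_{0,T}$). Let $\mathcal{K}(s,\sigma)$ denote the associated propagator, so that $U(s)=\mathcal{K}(s,\sigma)U(\sigma)$ for $s\geq\sigma$. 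From Proposition \ref{prop:ExpdecayL2rho} I already have $\|U(s)\|_{L^2_\rho}=\mathcal{O}(e^{-s/2}/s^3)$.

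Fix $s$ large and choose $\sigma=s-\log s$. Standard interior parabolic regularity applied to the equation for $U$ on a fixed compact cylinder converts the $L^2_\rho$ bound at time $\sigma$ into an $L^\infty$ bound on bounded sets: for any fixed $R>0$,
\[
\|U(\sigma)\|_{L^\infty(|y|\leq R)}\leq C(R)\,\frac{e^{-\sigma/2}}{\sigma^3}.
\]
The key point is then to exploit the transport effect of the convective term $-\frac{y}{2}\cdot\nabla$ in $\mathcal{L}$: the Mehler-type kernel of $e^{(s-\sigma)\mathcal{L}}$ is concentrated near the backward characteristic $y\mapsto ye^{-(s-\sigma)/2}$, which maps $|y|\leq K\sqrt{s}$ into $|y'|\leq K$ precisely when $s-\sigma=\log s$. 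Combining a Duhamel representation of $\mathcal{K}(s,\sigma)$ in the presence of the bounded potential $W$ with $L^q_\rho$ estimates in the spirit of Lemma \ref{lemm:BKespri} and of the arguments in \cite{MZgfa98,VELcpde92} recalled in \cite{FZnon00}, I would obtain
\[
\|U(s)\|_{L^\infty(|y|\leq K\sqrt{s})}\leq C\,e^{s-\sigma}\|U(\sigma)\|_{L^\infty(|y|\leq R)}+\text{remainder},
\]
where the remainder, coming from the contribution of $U(\sigma)$ outside $|y|\leq R$ through the Gaussian tail of the kernel, is controlled by the $L^2_\rho$ bound via Cauchy--Schwarz and is of the same order or smaller. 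With $e^{s-\sigma}=s$ and $e^{-\sigma/2}=e^{-s/2}\sqrt{s}$, this yields $\|U(s)\|_{L^\infty(|y|\leq K\sqrt{s})}\leq Ce^{-s/2}/s^{3/2}$, as claimed.

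The main obstacle is the careful balancing, inside the propagator $\mathcal{K}(s,\sigma)$, of the exponential growth $e^{s-\sigma}$ produced by the top eigenvalue $1$ of $\mathcal{L}$, the backward-in-time contraction by $e^{-(s-\sigma)/2}$ of the convective term, and the decay rate $e^{-\sigma/2}/\sigma^3$ of $\|U(\sigma)\|_{L^2_\rho}$. The choice $s-\sigma=\log s$ is tuned precisely so that these three effects balance and yield the loss of $s^{3/2}$ with respect to the rate $s^{-3}$ given by $L^2_\rho$. The technical implementation is carried out in $L^q_\rho$ spaces for $q>1$ chosen so that local $L^\infty$ control is available via Sobolev-type embeddings, following the one-dimensional proof of \cite{FZnon00} almost verbatim.
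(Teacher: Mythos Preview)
Your overall strategy coincides with the paper's: linearize around one of the two solutions, exploit the transport effect of $-\tfrac{y}{2}\cdot\nabla$ over a time window $s-\sigma\sim\log s$, and balance the semigroup growth against the $L^2_\rho$ decay from Proposition~\ref{prop:ExpdecayL2rho}. The paper streamlines your intermediate step slightly: instead of first passing from $L^2_\rho$ to $L^\infty$ on a fixed ball by parabolic regularity and then transporting, it invokes a single lemma (Lemma~\ref{lemm:extencon}, from \cite{VELcpde92}) giving directly
\[
\sup_{|y|\le \frac{K}{4}\sqrt{s}}|g(y,s)|\le C\,e^{s-s'}\|g(s')\|_{L^2_\rho}
\quad\text{when }e^{(s-s')/2}=K\sqrt{s}.
\]

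There is, however, one genuine gap. You justify only that the potential $W$ is \emph{uniformly bounded}, and then claim the propagator contributes a factor $e^{s-\sigma}$. With a merely bounded potential, $|W|\le M$, the Feynman--Kac/Duhamel argument gives at best growth $e^{(1+M)(s-\sigma)}=s^{1+M}$ over the window $s-\sigma=\log s$, which destroys your power count (you would get $e^{-s/2}s^{M-3/2}$ instead of $e^{-s/2}s^{-3/2}$). What is actually needed, and what the paper uses, is the one-sided bound $W(y,s)\le \tfrac{M}{s}$; then $\int_\sigma^s W\,d\tau\le M\log(s/\sigma)=O(1)$ and the growth factor is genuinely $Ce^{s-\sigma}$. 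This sharper bound does \emph{not} follow from mere boundedness of $\T[u]$ and $\T[\bar u_{\mathcal A}]$; it requires the refined $L^\infty$ estimate $\|\T[u](s)\|_{L^\infty}\le\kappa+\tfrac{C}{s}$ of Proposition~\ref{prop:MZrefi}, which gives $p|\,\cdot\,|^{p-1}\le p\kappa^{p-1}+\tfrac{C}{s}=\tfrac{p}{p-1}+\tfrac{C}{s}$ and hence $W\le \tfrac{C}{s}$. Once you insert this ingredient, your computation $e^{s-\sigma}\cdot e^{-\sigma/2}/\sigma^3 = s\cdot e^{-s/2}\sqrt{s}/s^3 = e^{-s/2}/s^{3/2}$ is correct and the argument closes.
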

\begin{proof} Part $(ii)$ immediately follows from part $(i)$ by the transformation \eqref{def:simivars}. As for part $(i)$, we introduce
$$g_{\mathcal{A}}(y,s) = \T[u](y,s) - \T[\bar{u}_{\mathcal{A}}](y,s),$$
then, we see from \eqref{equ:w} that $g_{\mathcal{A}}$ (or $g$ for simplicity) solves the following equation:
\begin{equation}\label{equ:gFZ}
\partial_s g = \Delta g - \frac{y}{2}\cdot \nabla g + (1 + \theta(y,s))g, \quad \forall (y,s) \in \RN\times[\hat{s}, +\infty),
\end{equation}
where $\hat{s} = \max\{-\log T, - \log T_{\mathcal{A}}\}$ and 
\begin{equation}\label{def:thetaFZ}
\theta(y,s) = \frac{|\T[u]|^{p-1}\T[u] - |\T[\bar{u}_{\mathcal{A}}]|^{p-1}\T[\bar{u}_{\mathcal{A}}]}{\T[u] - \T[\bar{u}_{\mathcal{A}}]} - \frac{p}{p-1}.
\end{equation}
We claim that the conclusion $(i)$ is a direct consequence of the following lemma:
\begin{lemm}[\textbf{Extension of the convergence from compact sets to sets $|y| \leq K\sqrt{s}$}] \label{lemm:extencon} Consider $g$ a solution to \eqref{equ:gFZ} and assume that $\theta(y,s) \leq \frac{M}{s}$ and $|g(y,s)| \leq M$ for all $(y,s) \in \RN \times [\hat{s}, +\infty)$. Then, for all $s' \geq \hat{s}$ and $s \geq s' + 1$ such that $e^\frac{s - s'}{2} = K\sqrt{s}$, we have
$$\sup_{|y|\leq \frac{K}{4}\sqrt{s}} |g(y,s)| \leq C(M,K)e^{s - s'}\|g(s')\|_{L^2_\rho}.$$
\end{lemm}
\begin{proof} Lemma \ref{lemm:extencon} is a corollary of Proposition 2.1 in \cite{VELcpde92}. It is proved in the course of the proof of Proposition 2.13 in \cite{VELcpde92}. The reader also find its proof in \cite{FZnon00}, pages 1204-1205. 
\end{proof}

Since $\T[u]$ and $\T[\bar{u}_{\mathcal{A}}]$ are bounded, then $\|g(s)\|_{L^\infty} \leq M$. To show that $\theta(y,s) \leq \frac{M}{s}$, we note from the definition \eqref{def:thetaFZ} of $\theta$ that in general, if $u \ne \bar{u}_{\mathcal{A}}$, then 
$$\theta(y,s) = p|\bar{w}(y,s)|^{p-1} - \frac{p}{p-1}, \quad \text{for some} \; \bar{w} \in (\T[u], \T[\bar{u}_{\mathcal{A}}]).$$
The use of Proposition \ref{prop:MZrefi} yields 
$$\theta(y,s) \leq p \left(\kappa + \frac{C}{s}\right)^{p-1} - p \kappa^{p-1} \leq \frac{M}{s}.$$
Therefore, Lemma \ref{lemm:extencon} and Proposition \ref{prop:ExpdecayL2rho} yield for all $|y| \leq K\sqrt{s}$ and $s \geq s_0'$, 
$$\sup_{|y|\leq \frac{K}{4}\sqrt{s}}|g(y,s)| \leq Ce^{s-s_*}\frac{e^{-s_*/2}}{s_*^{3}},$$
where $e^{\frac{s - s_*}{2}} = K\sqrt{s}$. Since $s_* = s - \log (K^2s) \sim s$ as $s \to +\infty$, conclusion $(i)$ follows. This ends the proof of Proposition \ref{prop:extcon}.
\end{proof}

\bigskip
\noindent \textbf{Step 3: Estimates in the original variables $(x,t)$ and conclusion.}

In this step, we use the uniform bound on $u - \bar{u}_{\mathcal{A}}$ in the region\\
$\{(x,t), |x|\leq K\sqrt{|\log(T-t)|(T-t)}\}$ derived in the previous step and a uniform ODE comparison result in order to extend this bound to the region where $\epsilon_0 \geq |x| \geq K\sqrt{|\log(T-t)|(T-t)}$ for some $\epsilon_0 > 0$. For sake of completeness, we recall their result below and kindly refer the reader to \cite{FZnon00} for the details of the proof.
\begin{prop}[\textbf{Estimates in the intermediate region}] \label{prop:estIterr} There exists $\epsilon_0 > 0$ such that for all $x \in B(0, \delta)$ and $t \in [0, T)$, if $K\sqrt{|\log(T-t)|(T-t)} \leq |x| \leq \epsilon_0$, then 
\begin{align*}
|u(x,t) - \bar{u}_{\mathcal{A}}(x,t)| &\leq C(T-\tilde{t})^{\frac{1}{2} - \frac{1}{p-1}}|\log(T- \tilde{t})|^{-\frac{3}{2}}\\
&\leq C |x|^{1 - \frac{2}{p-1}}|\log |x||^{-\left(2 - \frac{1}{p-1} \right)},
\end{align*}
where $\tilde{t} = \tilde{t}(|x|)$ is defined by
$$|x| = K\sqrt{|\log(T-\tilde{t})|(T - \tilde{t})}.$$
\end{prop}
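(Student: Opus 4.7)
The plan is to combine the estimate at the edge of the blow-up region furnished by Proposition \ref{prop:extcon}$(ii)$ with a uniform ODE comparison along characteristics $x = \text{const}$ in the outer region, and then to translate the resulting bound from the time variable $(T-\tilde{t})$ back to the spatial variable $|x|$. First, for $|x|$ small I would define $\tilde{t}=\tilde{t}(|x|)$ implicitly by $|x|^2 = K^2|\log(T-\tilde{t})|(T-\tilde{t})$; since $\tau \mapsto K^2|\log\tau|\tau$ is continuous and strictly increasing near $\tau=0$, this determines $\tilde{t}$ uniquely and $\tilde{t}\to T$ as $|x|\to 0$. By construction, the point $x$ lies exactly on the edge of the blow-up region at time $\tilde{t}$, so that Proposition \ref{prop:extcon}$(ii)$ directly supplies the initial bound
$$|u(x,\tilde{t}) - \bar{u}_{\mathcal{A}}(x,\tilde{t})| \leq C\,\frac{(T-\tilde{t})^{\frac{1}{2}-\frac{1}{p-1}}}{|\log(T-\tilde{t})|^{\frac{3}{2}}}.$$

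Second, for $t\in[\tilde{t}(|x|),T)$ the point $x$ stays in the outer region $|x|\geq K\sqrt{|\log(T-t)|(T-t)}$, where the Laplacian is subdominant against $|u|^{p-1}u$ and the solution is governed, up to controlled corrections, by the scalar ODE $v'=|v|^{p-1}v$ along $x=\text{const}$. I would invoke the uniform ODE comparison principle for equation \eqref{equ:problem} developed by Merle and Zaag in \cite{MZcpam98,MZgfa98} and used in the one-dimensional case of \cite{FZnon00} to propagate the bound of Step~1 forward in time without enlarging its order, thus obtaining the first inequality of the proposition for all $t\in[\tilde{t},T)$.

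Third, I would translate this estimate into one in the variable $|x|$. Taking logarithms in the defining relation for $\tilde{t}$ yields $|\log(T-\tilde{t})|\sim 2|\log|x||$ and $T-\tilde{t}\sim |x|^2/(2K^2|\log|x||)$ as $|x|\to 0$. Substituting these equivalents into the bound of Step~2 and collecting the powers of $|\log|x||$ produces exactly the exponent $1-\frac{2}{p-1}$ on $|x|$ and $-(2-\frac{1}{p-1})$ on $|\log|x||$, giving the second bound claimed in the statement.

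The main obstacle is Step~2: since both $u$ and $\bar{u}_{\mathcal{A}}$ diverge as $t\to T$, a naive Gronwall estimate for the linear parabolic equation satisfied by $\Phi = u-\bar{u}_{\mathcal{A}}$ with potential $p|\bar{w}|^{p-1}$ (where $\bar{w}$ lies between $u$ and $\bar{u}_{\mathcal{A}}$) would accumulate a divergent exponential factor. The uniform ODE argument of \cite{MZgfa98} circumvents this by exploiting the sharp ODE asymptotics of each solution along $x=\text{const}$ separately and comparing them with solutions of the scalar ODE $v'=v^p$ up to lower order corrections, so that the difference at any time $t\in[\tilde{t},T)$ remains of the same order as at time $\tilde{t}$. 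I would follow this strategy verbatim, which is legitimate since refined estimates on $\T[u]$ and $\T[\bar{u}_{\mathcal{A}}]$ of the type Proposition \ref{prop:MZrefi} supply the potential control needed to apply the ODE comparison in any dimension $N\geq 1$.
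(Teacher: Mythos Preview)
Your proposal is correct and follows essentially the same approach as the paper, which refers the reader to pages 1207--1208 of \cite{FZnon00} for the details: the edge estimate from Proposition~\ref{prop:extcon}$(ii)$ at time $\tilde t(|x|)$, the uniform ODE comparison of \cite{MZgfa98} to propagate it for $t\in[\tilde t,T)$, and the algebraic conversion $(T-\tilde t,|\log(T-\tilde t)|)\to(|x|,|\log|x||)$. Your identification of the obstacle in Step~2 and of the way the ODE argument bypasses it is also accurate.
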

\begin{proof} See pages 1207-1208 in \cite{FZnon00}.
\end{proof}
\noindent Thus, we have from Propositions \ref{prop:extcon} and \ref{prop:estIterr},
\begin{itemize}
\item[-] if $|x| \leq K\sqrt{|\log(T-t)|(T-t)}$, then 
$$|u(x,t) - u_{\mathcal{A}}(x,t+ T_\mathcal{A} - T)| \leq C(K)(T-t)^{\frac{1}{2} - \frac{1}{p-1}}|\log (T-t)|^{-\frac{3}{2}},$$
\item[-] if $\epsilon_0 \geq |x| \geq K\sqrt{|\log(T-t)|(T-t)}$, then 
$$|u(x,t) - u_{\mathcal{A}}(x,t+ T_\mathcal{A} - T)| \leq C(K)|x|^{1 - \frac{2}{p-1}}|\log |x||^{-\left(2 - \frac{1}{p-1} \right)}.$$
\end{itemize}
which follows estimates \eqref{est:theo2p13}. This concludes the proof of Theorem \ref{theo:2} and Corollary \ref{coro:3} as well.
\end{proof}

\appendix
\renewcommand*{\thesection}{\Alph{section}}
\counterwithin{theo}{section}
\section{Some general results on blow-up solutions to equation   \eqref{equ:problem}.}
In this section, we recall some earlier results  and techniques concerning blow-up solutions of equation \eqref{equ:problem}.
\subsection{Existence of symmetric and radially decreasing solutions to equation \eqref{equ:problem} in $\mathbb{B}'_{0,T}$. } \label{ap:B}
We give in this appendix the existence of radially symmetric and decreasing solutions to equation \eqref{equ:problem} in $\mathbb{B}'_{0,T}$. Let us recall the following result from Bricmont and Kupiainen \cite{BKnon94} and Merle and Zaag \cite{MZdm97}:

\medskip

\noindent \textit{There exists $T_0 > 0$ such that for each $T \in (0,T_0]$, there exists $(d_0, d_1) \in \R \times \RN$ such that  equation \eqref{equ:problem} with initial data 
\begin{equation}\label{equ:intialdataMZ}
u_0(x) = T^{-\frac{1}{p-1}}\left\{f(\xi) \left( 1 + \frac{d_0 + d_1\cdot \xi}{p-1 + \frac{(p-1)^2}{4p}|\xi|^2}\right) \right\}, \quad \xi = \frac{x}{\sqrt{T|\log T|}},
\end{equation}
where $f$ is defined in \eqref{def:fk}, has a unique solution $u \in \mathbb{B}'_{0,T}$. Moreover, there exists $A > 0$ such that
\begin{equation}\label{ap:behaEx}
\T[u](s) - \varphi(s) \in \tilde{V}_A(s), \quad \forall s \geq -\log T,
\end{equation}
where $\varphi$ is defined in \eqref{intr:v} and $\tilde{V}_A(s)$ is the set of all functions $r$ in $L^\infty(\RN)$ such that 
\begin{equation}\label{def:V_AMZ}
\begin{array}{ll}
|r_m(s)| \leq As^{-2} \quad m = 0,1, \quad &|r_2(s)| \leq A^2s^{-2}\log(s),\\
|r_-(y,s)| \leq As^{-2}(1 + |y|^3),\quad  &|r_e(y,s)| \leq A^2s^{-1/2},
\end{array}
\end{equation}
where $r$ is expanded as in \eqref{def:decomv}.
}

\medskip

\noindent In view of \eqref{equ:intialdataMZ}, if we take $d_1 = 0$, then the initial data $u_0(x)$ in \eqref{equ:intialdataMZ} is radially symmetric and decreasing, hence the corresponding solution, say $u(d_0)$, has the same symmetric. In fact, the argument of \cite{BKnon94} and \cite{MZdm97} works with only one variable $d_0$, and we get a different version of the result in the setting of radially decreasing solutions, yielding a particular value $d_0 = \hat{d}_0$ such that the corresponding solution $u(\hat{d}_0) = \hat{u}$ satisfies \eqref{ap:behaEx}. In particular, $\hat{u} \in B'_{0,T}$. Note that the result of \cite{BKnon94} is true for all $N \geq 1$ and $p > 1$, since the authors in \cite{BKnon94} work in the $L^\infty$ space, although the proof in \cite{BKnon94} is given only for $N= 1$ for simplicity. Thus, the existence of a radially symmetric and decreasing solution $\hat{u} \in \mathbb{B}'_{0,T}$ with  \eqref{ap:behaEx} satisfied is true for all $N \geq 1$ and $p > 1$.

\subsection{A classification result of the difference of two solutions in $\mathbb{B}_{0,T}$.}\label{sec:apClas}

In this appendix, we recall the classification result of \cite{FZnon00} mentioned in page \pageref{equ:case1FZ} of the introduction. Let us recall their result in the following proposition:
\begin{prop}[\textbf{Classification of the difference of two solutions in $\mathbb{B}_{0,T}$}] \label{prop:apMZ00} Consider $u_i \in \mathbb{B}_{0,T}$, $i = 1,2$, then, two cases arise: \\
- Either there is a matrix $\mathcal{A} = \mathcal{A}(u_1, u_2) \in \mathbb{M}_N(\R)$ ($\mathcal{A} \neq 0$) such that
\begin{equation}\label{equ:case1FZap}
\T[u_1](y,s) - \T[u_2](y,s) = \dfrac{1}{s^2}\left(\frac{1}{2}y^T\mathcal{A}y - tr(\mathcal{A})\right) + o\left(\frac{1}{s^2} \right) \quad \text{in}\;\; L^2_\rho, \; \; \text{as}\;\; s \to +\infty.
\end{equation}
- Or there is a constant $C > 0$ such that for $s$ large, 
\begin{equation}\label{equ:case2FZap}
\|\T[u_1](s) - \T[u_2](s)\|_{L^2_\rho} \leq \dfrac{Ce^{-s/2}}{s^3}.
\end{equation}
\end{prop}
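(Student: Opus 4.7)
The plan is to analyze the evolution of the difference $v(y,s) = \T[u_1](y,s) - \T[u_2](y,s)$ in similarity variables and use a spectral decomposition with respect to the linearized operator $\mathcal{L}$, in the same spirit as the classification in \cite{FZnon00}. Subtracting the equations \eqref{equ:w} satisfied by $\T[u_1]$ and $\T[u_2]$, $v$ solves a linear equation
$$\partial_s v = \mathcal{L} v + V(y,s) v, \qquad V(y,s) = p|\bar{w}|^{p-1} - \tfrac{p}{p-1},$$
where $\bar{w}$ lies (pointwise) between $\T[u_1]$ and $\T[u_2]$. Since both $u_i \in \mathbb{B}_{0,T}$ share the common stable profile \eqref{equ:c1prof}, one has $\T[u_i](y,s) = \varphi(y,s) + o(1/s)$ uniformly on compact sets, with $\varphi$ defined in \eqref{def:alpha}. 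Therefore $V$ is a small perturbation of the reference potential $\alpha(y,s)$, and the fundamental solution of $\mathcal{L} + V$ enjoys the a priori estimates of Lemma \ref{lemm:BKespri} for $\mathcal{K}(s,\sigma)$.

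Next, decompose $v$ as in \eqref{def:decomv} into its positive modes $v_0, v_1$ (eigenvalues $1, 1/2$), its null mode encoded by the symmetric matrix $v_2(s)$, its negative part $v_-$ and its exterior part $v_e$. Since both $\T[u_i]$ have the same limiting profile, $\|v(s)\|_{L^\infty}$ is bounded (in fact, tends to $0$). Projecting onto the eigenspaces of $\mathcal{L}$, I expect to derive ODEs analogous to Lemma \ref{lemm:evm012}. The expanding modes $v_0, v_1$ cannot actually grow as $e^{s}$ or $e^{s/2}$ without contradicting the $L^2_\rho$ smallness of $v$, so they must be $o$ of the negative mode contributions; more precisely, a Duhamel bootstrap gives $|v_0(s)|, |v_1(s)| = O(e^{-s/2}/s^3)$. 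The null mode satisfies an ODE of the form
$$v_2'(s) = -\tfrac{2}{s}\, v_2(s) + E(s), \qquad \|E(s)\| \ll 1/s^3,$$
coming from the interaction of $\alpha$ with the degree-$2$ Hermite eigenspace; integrating it gives $v_2(s) = \mathcal{A}/s^2 + o(1/s^2)$ for a unique symmetric matrix $\mathcal{A} = \mathcal{A}(u_1,u_2) \in \mathbb{M}_N(\mathbb{R})$. The negative and exterior parts are controlled by Lemma \ref{lemm:BKespri}-type estimates and decay at least like $e^{-s/2}$.

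The dichotomy is then immediate. If $\mathcal{A} \neq 0$, the null-mode contribution $\frac{1}{s^2}\bigl(\frac12 y^T \mathcal{A} y - \mathrm{tr}(\mathcal{A})\bigr)$ dominates the other modes and case \eqref{equ:case1FZap} follows. If $\mathcal{A} = 0$, then all modes decay at least like $1/s^{2+\varepsilon}$, which fed back into the Duhamel formulation allows a bootstrap that upgrades the decay to the sharp rate $e^{-s/2}/s^3$, coming from the slowest decaying negative mode ($k=3$, which has decay rate $e^{-s/2}$), and this yields case \eqref{equ:case2FZap}.

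The main obstacle lies in the careful bookkeeping of the error term $E(s)$ in the ODE for the null mode: one must show that the quadratic-in-$v$ contributions from $B(v)$ together with the potential remainder $(V - \alpha)v$ are genuinely $o(1/s^3)$, so that the asymptotics $v_2(s) \sim \mathcal{A}/s^2$ really emerges with a non-ambiguous constant. In the degenerate sub-case $\mathcal{A} = 0$, the additional difficulty is iterating the Duhamel estimates with the right shrinking set in order to extract the precise $e^{-s/2}/s^3$ rate and not merely a power-law decay. These technical points are handled in detail in \cite{FZnon00}, which we invoke directly.
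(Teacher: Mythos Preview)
Your approach is in the right spirit and, like the paper, ultimately rests on the analysis of \cite{FZnon00}. However, the paper organizes the argument differently and more robustly, via a \emph{dominance} lemma (Lemma \ref{lemm:apFZcla}, which is Proposition 2.6 of \cite{FZnon00}): writing $I(s)=\|g(s)\|_{L^2_\rho}$ and $\ell_k(s)=\|P_k g(s)\|_{L^2_\rho}$, one first shows $\ell_0,\ell_1=O(I/s)$ and that either a single mode $k_0\ge 2$ dominates, with $(cs^c)^{-1}e^{(1-k_0/2)s}\le I(s)\le Cs^Ce^{(1-k_0/2)s}$, or every mode is $O(I/s)$ with $I$ decaying faster than any exponential. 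One then does a clean case split on $k_0$: if $k_0\ge 4$ or the second alternative holds, \eqref{equ:case2FZap} is immediate; if $k_0=3$ one integrates $g_\beta'=-(\tfrac12+\tfrac3s)g_\beta+O(I/s^{3/2})$ to get \eqref{equ:case2FZap}; if $k_0=2$ one integrates $g_\beta'=-\tfrac2s g_\beta+O(I/s^{3/2})$, and here the crucial input is the a priori bound $I(s)\le C\log s/s^2$, valid because both $u_i$ satisfy \eqref{equ:c1clas}, which makes the error $O(\log s/s^{7/2})=o(1/s^3)$ and yields \eqref{equ:case1FZap}.

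Your direct mode-by-mode route has two concrete gaps. First, the claim $|v_0(s)|,|v_1(s)|=O(e^{-s/2}/s^3)$ is false in the first alternative: when $\mathcal{A}\ne 0$ one has $I(s)\sim 1/s^2$, and the positive modes are only $O(I/s)=O(1/s^3)$, not exponentially small. Second, and more seriously, you never invoke the a priori bound $I(s)\le C\log s/s^2$ coming from membership in $\mathbb{B}_{0,T}$; without it, the error $E(s)$ in your null-mode ODE is only $O(I(s)/s^{3/2})$, which is not automatically $o(1/s^3)$, and the integration producing $v_2(s)=\mathcal{A}/s^2+o(1/s^2)$ does not close. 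Likewise, in the degenerate case $\mathcal{A}=0$, the passage from ``all modes decay at least like $1/s^{2+\varepsilon}$'' to the sharp rate $e^{-s/2}/s^3$ is not a bootstrap on Duhamel; it is precisely the dominance alternative $k_0\ge 3$ (or the ``no dominant mode'' alternative) that furnishes the exponential rate directly.
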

\begin{proof} Let us define 
$$g(y,s) = \T[u_1](y,s) - \T[u_2](y,s)$$
and denote
$$I(s) = \|g(s)\|_{L^2_\rho}, \quad \ell_k(s) = \|P_k(g)(s)\|_{L^2_\rho},$$
where $P_k$ is defined as in \eqref{def:Projector}. Then, we claim the following:
\begin{lemm}[\textbf{Existence of a dominant component}] \label{lemm:apFZcla} For $s$ large enough, we have\\
(a) For $k \in \{0,1\}$, $\; \ell_k(s) = \mathcal{O}\left(\frac{I(s)}{s}\right).$\\
(b) Only two cases may occur:
\begin{itemize}
\item[(i)] There exists $k_0 \in \mathbb{N}$, $k_0 \not \in \{0,1\}$ so that $I(s) \sim \ell_{k_0}(s)$ and 
\begin{equation*}
\forall k \ne k_0, \quad \ell_k(s) = \mathcal{O}\left(\frac{\ell_{k_0}(s)}{s}\right).
\end{equation*}
Moreover, there exist two positive constants $c$ and $C$ such that 
\begin{equation*}
(cs^c)^{-1}e^{\left(1 - \frac{k_0}{2}\right)s} \leq I(s) \leq Cs^Ce^{\left(1 - \frac{k_0}{2}\right)s}.
\end{equation*}
\item[(ii)] For all $k \in \mathbb{N}$, $\;\ell_k(s) = \mathcal{O}\left( \frac{I(s)}{s}\right)$ and there exists $C_k > 0$ such that
\begin{equation*}
I(s) = \mathcal{O}\left(s^{C_k}e^{\left(1 - \frac{k}{2}\right)s}\right).
\end{equation*}
\end{itemize}
\end{lemm}
\begin{proof} See Proposition 2.6, page 1196 in \cite{FZnon00}.
\end{proof}

\bigskip

Let us give the proof of Proposition \ref{prop:apMZ00} from Lemma \ref{lemm:apFZcla}. We first observe that if case $(i)$ occurs with $k_0 \geq 4$ or case $(ii)$ occurs, then we immediately obtain \eqref{equ:case2FZap}. It remains to examine what happens if $(i)$ occurs with $k_0 = 2, 3$. In particular, we have the following (see Proposition 2.9 in \cite{FZnon00}):\\
- If $I(s) \sim \ell_2(s)$, we have 
$$\forall \beta \in \mathbb{N}^N, \; |\beta| = 2, \quad g_{\beta}'(s) = -\frac{2}{s}g_{\beta} + \mathcal{O}\left(\frac{I(s)}{s^{3/2}}\right),$$
where $g_\beta$ is defined in \eqref{def:Phim} (see page 1200 in \cite{FZnon00} where a similar calculation was given for the case $|\beta| = 3$). From Definition \eqref{def:BoT} and \eqref{equ:c1clas}, we note that $I(s) \leq \frac{C\log s}{s^2}$, hence, 
$$\forall \beta \in \mathbb{N}^N, \; |\beta| = 2, \quad g_\beta(s) = \frac{c_\beta}{s^2} + o\left(\frac{1}{s^2}\right) \quad \text{for some $c_\beta \in \R$}.$$
By definition, this yields \eqref{equ:case1FZap}.\\
- If $I(s) \sim \ell_3(s)$, we have
$$\forall \beta \in \mathbb{N}^N, \; |\beta| = 3, \quad g_{\beta}'(s) = -\left(\frac{1}{2} + \frac{3}{s}\right)g_\beta(s) + \mathcal{O}\left(\frac{I(s)}{s^{3/2}}\right),$$ which implies \eqref{equ:case2FZap}. This concludes the proof of Proposition \ref{prop:apMZ00}.
\end{proof}

\subsection{Uniform $L^\infty$ estimates.}
We recall here the refined $L^\infty$ estimates for solutions to equation \eqref{equ:problem} at blow-up from \cite{MZgfa98} and \cite{MZma00}.
\begin{prop}[\textbf{$L^\infty$ estimates for solution to \eqref{equ:problem} at blow-up}]\label{prop:MZrefi} There exist positive constants $C_1$, $C_2$ and $C_3$ such that if $u$ is a solution to \eqref{equ:problem} which blows up in some finite time $T$ at point $x = 0$, then for all $\epsilon > 0$, there exists $s_1(\epsilon)$ such that for all $s \geq s_1(\epsilon)$
\begin{equation}\label{equ:refiL}
\|\T[u](s)\|_{L^\infty} \leq \kappa + \frac{1}{s}\left(\frac{N\kappa}{2p} + \epsilon\right) \quad \text{and} \quad \|\nabla^i \T[u](s)\|_{L^\infty} \leq \frac{C_i}{s^{i/2}},
\end{equation}
for $i = 1, 2, 3$, where $\T[u]$ is defined in \eqref{def:simivars}.
\end{prop}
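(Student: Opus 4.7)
The plan is to split the analysis into two regions in similarity variables, the \emph{blow-up region} $|y|\le K\sqrt s$ and the \emph{outer region} $|y|\ge K\sqrt s$, then derive the $L^\infty$ bound on $w=\T[u]$ from the refined profile description, and finally obtain the derivative bounds by parabolic regularity applied to equation \eqref{equ:w}.

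First, for the bound on $\|w(s)\|_{L^\infty}$, I would invoke the refinement of Giga--Kohn's result recalled in \eqref{equ:c1clas}: uniformly on $|y|\le K\sqrt s$ one has
\begin{equation*}
w(y,s)=f\!\left(\tfrac{y}{\sqrt s}\right)+\frac{N\kappa}{2ps}+o\!\left(\tfrac{1}{s}\right),
\end{equation*}
where $f$ defined in \eqref{def:fk} is radially decreasing with $\max f=f(0)=\kappa$. Hence on this region $w(y,s)\le \kappa+\frac{N\kappa}{2ps}+o(1/s)$. In the outer region $|y|\ge K\sqrt s$, one exploits property \eqref{equ:asymalp}: the potential in the linearization is close to $-p/(p-1)$, so that $w$ is exponentially damped and obeys, after an ODE comparison for \eqref{equ:w} of the form $w_s\le -\frac{1}{p-1}w+|w|^{p-1}w$, the much sharper estimate $|w(y,s)|\le \kappa/2$ for $K$ and $s$ large (this is the Giga--Kohn / Merle--Zaag outer region bound). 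Taking the maximum over both regions yields the first inequality in \eqref{equ:refiL} for $s\ge s_1(\epsilon)$.

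Second, for the derivative estimates, I would use the semigroup representation of \eqref{equ:w}. Writing $w_s=\mathcal Lw-\tfrac{p}{p-1}w+|w|^{p-1}w$ and applying Duhamel with the kernel $e^{s\mathcal L}$, which has the explicit Mehler form, gives for any $s$ large and $\tau\in(0,1]$
\begin{equation*}
w(s)=e^{\tau(\mathcal L-\tfrac{p}{p-1})}w(s-\tau)+\int_{s-\tau}^{s}e^{(s-\sigma)(\mathcal L-\tfrac{p}{p-1})}\big(|w|^{p-1}w\big)(\sigma)\,d\sigma.
\end{equation*}
The first step from the Mehler kernel yields $\|\nabla^i e^{\tau\mathcal L}\phi\|_{L^\infty}\le C\tau^{-i/2}\|\phi\|_{L^\infty}$. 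Combined with the uniform bound on $w$ already obtained and a bootstrap (first $i=1$, then $i=2,3$ using the already controlled lower derivatives to bound $\nabla^i(|w|^{p-1}w)$), choosing $\tau=1/s$ produces the gradient weights $s^{-i/2}$ for $i=1,2,3$. This is essentially a rescaling argument: in the original variables $u$ is smooth at the scale $\sqrt{T-t}$, which translates to derivative loss of size $s^{i/2}$ in the $y$ variable.

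The main obstacle is the \emph{uniformity} of the constants $C_i$ and of the threshold $s_1(\epsilon)$: the refined profile expansion is usually stated for a \emph{fixed} solution, whereas Proposition \ref{prop:MZrefi} requires a constant independent of the particular blowing-up solution. The standard way around this, used in \cite{MZgfa98,MZma00}, is to argue by contradiction and compactness: if the claim failed, there would be a sequence $u^{(n)}$ of solutions and times $s^{(n)}\to\infty$ violating the inequality; by parabolic compactness, along a subsequence the rescaled similarity variable profiles would converge to a bounded global solution of \eqref{equ:w}, whose only admissible limit (by the Giga--Kohn classification and energy arguments) is the constant $\pm\kappa$, contradicting the quantitative bound. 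Handling this limiting argument cleanly, especially for the higher derivatives, is the delicate point; the $L^\infty$ bound on $w$ itself is the main input and the rest is bootstrap.
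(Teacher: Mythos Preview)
The paper does not give its own proof here; it simply cites \cite{MZgfa98} and \cite{MZma00}. So the comparison is with the method of those references, and on that score your proposal has two genuine gaps.

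First, the derivative argument does not work as written. With $\tau=1/s$ the smoothing estimate $\|\nabla^i e^{\tau\mathcal L}\phi\|_{L^\infty}\le C\tau^{-i/2}\|\phi\|_{L^\infty}$ gives a factor $s^{i/2}$, not $s^{-i/2}$; you have the sign of the exponent backwards. Parabolic regularity applied to \eqref{equ:w} with a bounded right-hand side only yields $\|\nabla^i w(s)\|_{L^\infty}\le C$, which is far weaker than \eqref{equ:refiL}. The decay $s^{-i/2}$ is \emph{not} a regularity statement; it encodes that $w$ is nearly constant (close to $\kappa$) on scales $|y|\lesssim\sqrt s$, and there is no way to extract it from Duhamel and a mere $L^\infty$ bound on $w$.

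Second, the argument for the first inequality is circular in the way you set it up. The expansion you invoke, $w(y,s)=f(y/\sqrt s)+\tfrac{N\kappa}{2ps}+o(1/s)$ uniformly on $|y|\le K\sqrt s$, is not \eqref{equ:c1clas} (which is only on compact sets $|y|\le R$) and is in fact a consequence of, not a route to, the bounds in Proposition~\ref{prop:MZrefi}; see how \eqref{est:uhat} is obtained in Appendix~\ref{ap:B} \emph{after} the $L^\infty$ machinery is in place. In \cite{MZgfa98,MZma00} the logic goes the other way: one first proves a Liouville theorem classifying bounded eternal solutions of \eqref{equ:w} as $0$ or $\pm\kappa$, and then both estimates in \eqref{equ:refiL} are obtained by the contradiction/compactness scheme you describe in your last paragraph, applied directly to the quantities $s\big(\|w(s)\|_{L^\infty}-\kappa\big)$ and $s^{i/2}\|\nabla^i w(s)\|_{L^\infty}$. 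That compactness argument is the core of the proof, not a uniformity afterthought; your first two steps should be replaced by it.
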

\begin{proof} The proof of this proposition can be found in \cite{MZgfa98} and \cite{MZma00}.
\end{proof}

\section{A toolbox for the construction proof.}
In this section, we prove some elementary estimates needed for the proof of Theorem \ref{theo:1}.\\

\noindent The following lemme gives some elementary estimates for the potential $\alpha$ given in equation \eqref{equ:v}:
\begin{lemm}[\textbf{Estimates for the potential $\alpha$}]\label{lemm:apEstalp} There exist a constant $C > 0$ and $s_1 > 0$ such that for all $y \in \mathbb{R}^N$ and $s \geq s_1$, 
\begin{itemize}
\item[i)] $\alpha(y,s) \leq \frac{C}{s}, \quad |\alpha(y,s)| \leq \frac{C}{s}( |y|^2 + 1), \quad \left|\alpha(y,s) + \frac{1}{4s}(|y|^2 - 2N)\right| \leq \frac{C}{s^2}(|y|^4 + 1).$
\item[ii)] $\left|\nabla^i \alpha(y,s) \right| \leq \frac{C}{s^{i/2}}, \; i = 0,1,2.$
\end{itemize}
\end{lemm}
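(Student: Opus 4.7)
The plan is to split
\[
\alpha(y,s) \;=\; p\bigl(f(\xi)^{p-1} - \kappa^{p-1}\bigr) + p\bigl(\varphi(y,s)^{p-1} - f(\xi)^{p-1}\bigr), \qquad \xi = y/\sqrt{s},
\]
and expand each summand in the scaling variable $\xi$, exploiting the closed form $f(\xi)^{p-1} = \kappa^{p-1}(1 + \tfrac{p-1}{4p}|\xi|^2)^{-1}$ that comes directly from the definition \eqref{def:fk} of $f$. For the first summand I would use the elementary identity $(1+u)^{-1} = 1 - u + u^2/(1+u)$ with $u = \frac{p-1}{4ps}|y|^2$, together with $p\kappa^{p-1} = p/(p-1)$, to obtain the \emph{exact} formula
\[
p\bigl(f(\xi)^{p-1} - \kappa^{p-1}\bigr) \;=\; -\frac{|y|^2}{4s} \;+\; \frac{p}{p-1}\cdot\frac{u^2}{1+u},
\]
whose remainder is clearly bounded by $C(|y|^4+1)/s^2$. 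For the second summand, the mean value theorem will give $\varphi^{p-1} - f^{p-1} = (p-1)\widetilde{f}^{\,p-2}\cdot \frac{N\kappa}{2ps}$ for some $\widetilde{f}$ between $f(\xi)$ and $\varphi$, and since $\widetilde{f} - \kappa = O((|y|^2+1)/s)$ everywhere, this contributes a main term $\frac{N}{2s}$ and a remainder $O((|y|^2+1)/s^2)$. The algebraic cancellation
\[
-\frac{|y|^2}{4s} + \frac{N}{2s} + \frac{|y|^2 - 2N}{4s} = 0
\]
then delivers the sharpest estimate in part (i). The other two bounds in (i) are weaker consequences, and can equivalently be read off directly from $0 < \varphi(y,s) \le \kappa + C/s$ and a single application of the mean value theorem.

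For part (ii), I would differentiate the explicit formula $\alpha = p\varphi^{p-1} - p\kappa^{p-1}$. Since $\nabla_y \varphi = \tfrac{1}{\sqrt{s}}(\nabla_\xi f)(\xi)$, one gets
\[
\nabla_y \alpha = \frac{p(p-1)}{\sqrt{s}}\,\varphi^{p-2}\,(\nabla_\xi f)(\xi),
\qquad
\nabla_y^2 \alpha = \frac{p(p-1)}{s}\Bigl[\varphi^{p-2}(\nabla_\xi^2 f)(\xi) + (p-2)\varphi^{p-3}(\nabla_\xi f)(\xi)^{\otimes 2}\Bigr].
\]
So the target $|\nabla^i \alpha| \le C/s^{i/2}$ reduces to showing that $\varphi^{p-2}|\nabla_\xi f|$, $\varphi^{p-2}|\nabla_\xi^2 f|$ and $\varphi^{p-3}|\nabla_\xi f|^2$ are bounded uniformly in $(\xi,s)$. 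When $p \ge 2$ this is immediate since $\varphi \le \kappa + C/s$ is bounded above and $\nabla_\xi f, \nabla_\xi^2 f$ decay at infinity.

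The main obstacle will be the subquadratic case $1 < p < 2$: there $\varphi^{p-2}$ blows up where $\varphi$ is small, namely at $|\xi|$ so large that the decaying $f(\xi)$ is dominated by the additive constant $\frac{N\kappa}{2ps}$. My plan is to split along the natural transition $f(\xi) \sim 1/s$, i.e.\ at $|\xi| \sim s^{(p-1)/2}$. On the inner region $\{|\xi| \le s^{(p-1)/2}\}$ one has $\varphi \asymp f$ and the explicit bounds $|\nabla_\xi f(\xi)| \lesssim |\xi|(1+|\xi|^2)^{-p/(p-1)}$, $f(\xi)^{p-2} \lesssim (1+|\xi|^2)^{(2-p)/(p-1)}$ combine to give a product that is uniformly bounded in $\xi$. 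On the outer region one has only $\varphi^{p-2} \lesssim s^{2-p}$, but $\nabla_\xi f$ has already decayed polynomially enough in $|\xi|$ at $|\xi| = s^{(p-1)/2}$ to absorb this factor and keep improving beyond. The second derivative is handled by the same region split using the explicit form of $\nabla_\xi^2 f$. This region-by-region check in the case $1 < p < 2$ is the only nontrivial work; everything else is a routine Taylor expansion.
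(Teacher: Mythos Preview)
Your argument is correct and, for part~(i), essentially coincides with the paper's: the paper also Taylor–expands in the scaling variable (writing $W(Z,s)=\alpha(y,s)$ with $Z=|y|^2/s$, computing $W(0,s)=\tfrac{N}{2s}+O(s^{-2})$ and $\partial_Z W(0,s)=-\tfrac14+O(s^{-1})$), and dispatches the large–$Z$ regime by the trivial remark that $\alpha$ is bounded there. Your splitting $\alpha=p(f^{p-1}-\kappa^{p-1})+p(\varphi^{p-1}-f^{p-1})$ is just a more explicit way to carry out the same expansion; your mean–value step for the second summand implicitly needs the same ``$Z$ bounded'' restriction (so that $\tilde f$ stays away from $0$ when $p<2$), but that is exactly how the paper closes part~(i) as well.

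For part~(ii) your proof works, but the region split at $|\xi|\sim s^{(p-1)/2}$ in the subquadratic case is unnecessary. The paper avoids it with a one–line trick: from $\nabla_\xi f=-\tfrac{2b}{p-1}\,\xi\,f^p$ and the elementary inequality $f\le\varphi$ (hence $\varphi^{p-2}\le f^{p-2}$ when $1<p<2$), one gets
\[
\varphi^{p-2}\,|\nabla_\xi f|\;\le\; C\,|\xi|\,f^{2(p-1)}\;\le\; C\,|\xi|\,(1+|\xi|^2)^{-2},
\]
which is bounded uniformly in $(\xi,s)$ for every $p>1$. The same substitution handles $\varphi^{p-2}|\nabla_\xi^2 f|$ and $\varphi^{p-3}|\nabla_\xi f|^2$, since each reduces to a power of $f^{p-1}$ times a polynomial in $|\xi|$. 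So the ``main obstacle'' you anticipated dissolves once you use $f\le\varphi$ instead of the cruder lower bound $\varphi\ge N\kappa/(2ps)$.
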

\begin{proof} $i)$ From the definition \eqref{def:alpha} of $\alpha$, we get 
$$\alpha(y,s) \leq p(\varphi(0,s)^{p-1} - \kappa^{p-1}) \leq \frac{C}{s},$$ 
which yields the first estimate. For the next estimates, we introduce 
$$W(Z,s) = \alpha(y,s)\quad \text{ with}\quad Z = \frac{|y|^2}{s}.$$ Taylor expansion of $W(Z,s)$ near $Z = 0$ yields
$$W(Z,s) = W(0,s) + Z\frac{\partial W}{\partial Z}(0,s) + \mathcal{O}(Z^2),$$
where $W(0,s) = \frac{N}{2s} + \mathcal{O}\left(\frac{1}{s^{2}}\right)$ and $\frac{\partial W}{\partial Z}(0,s) = -\frac{1}{4} + \mathcal{O}\left(\frac{1}{s}\right)$. Returning to $\alpha$ yields the last two estimate for $Z$ small. Since $\alpha$ is bounded, the result for $Z$ large is trivial.

$ii)$ By introducing $\hat{W}(z,s) = \alpha(y,s)$ with $z = \frac{y}{\sqrt{s}}$, it is enough to bound $|\nabla^i \hat{W}(z,s)|$ for $i = 1, 2$ which follows easily from the following key estimates
$$\nabla f(z) = \frac{-2bz}{(p-1)}f^p(z) \; \text{and}\; |f| \leq |\varphi|\; \text{with}\; b = \frac{(p-1)^2}{4p}.$$ 
This ends the proof of Lemma \ref{lemm:apEstalp}.
\end{proof}
The following lemmas give estimates on the components of the nonlinear term and the corrective term in equation \eqref{equ:v}.
\begin{lemm}[\textbf{(Estimates for $B(v)$)}]\label{lemm:apestBv} For all $A > 1$, there exists $\sigma_3(A)$ such that for all $\tau \geq \sigma_3(A)$, $v(\tau) \in V_A(\tau)$ implies
\begin{equation*}
m = 0,1,2, \; |B_m(\tau)| \leq \frac{C}{\tau^{4}}, \; \left\|\frac{B_-(y,\tau)}{1+|y|^3}\right\|_{L^\infty} \leq \frac{CA^4}{\tau^{5/2 + 2\eta}},\; \|B_e(\tau)\|_{L^\infty} \leq \frac{CA^{2p'}}{\tau^{(1/2 + \eta)p'}},
\end{equation*}
where $p' = \min \{p,2\}$.
\end{lemm}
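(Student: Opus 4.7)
The plan is to prove the three estimates in turn, exploiting the different behaviors of $B(v)$ on the support of $\chi$ (the blow-up region $|y|\leq 2K\sqrt{s}$) versus its complement. The key observation is twofold. On the support of $\chi$, the profile $\hat{w}$ is bounded below by a positive constant $c(K)>0$ (since $\hat{w}\sim f(y/\sqrt{s})\geq f(2K)$ there by \eqref{est:uhat}), so Taylor expanding $f(u)=|u|^{p-1}u$ around $\hat{w}$ yields the pointwise bound $|B(v)(y,s)|\leq C(K,p)\,v(y,s)^2$ valid for every $p>1$; on the exterior $\{|y|\geq K\sqrt{s}\}$ where $\hat{w}$ may vanish, only the weaker universal bound $|B(v)|\leq C(p)|v|^{p'}$ with $p'=\min(p,2)$ is available. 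Once these two pointwise bounds are established, the proof becomes a careful substitution of the $V_A$ bounds into each region.

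For the low modes $B_m$ with $m\in\{0,1,2\}$, I would combine the decomposition \eqref{def:decomv} with the definition of $V_A$ to obtain $|v_b(y,s)|\leq C(A,\|\mathcal{A}\|)\,s^{-2}(1+|y|^3)$ on the support of $\chi$, where the $\mathcal{A}/s^2$ piece of $v_{2}$ now dominates the polynomial part. Plugging this into the blow-up-region bound gives $|\chi B(v)|\leq C\,v_b^2\leq C(A,\mathcal{A})\,s^{-4}(1+|y|^6)$, plus an $L^\infty$ correction from $v_e$ on the transition annulus which, weighted by $\rho$, is exponentially small. Projecting against $\phi_\beta\rho$ for $|\beta|\leq 2$ and using $\int_{\RN}(1+|y|^8)\rho\,dy<\infty$ yields $|B_m|\leq C/s^4$. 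For $B_-$, I would refine this computation by splitting $v_b=(\text{polynomial part})+v_-$: the polynomial-polynomial contribution to $v_b^2$ is $\mathcal{O}(s^{-4}(1+|y|^4))$, whose ratio with $(1+|y|^3)$ is $\mathcal{O}(s^{-5/2})$ on the support of $\chi$; the mixed and squared terms involving $v_-$ carry the dominant factor $A^4/s^{5/2+2\eta}$ after dividing by $(1+|y|^3)$. Subtracting the projections on the first three modes (already controlled by the $B_m$ estimates) gives the claimed bound on $\|B_-/(1+|y|^3)\|_{L^\infty}$.

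For $B_e=(1-\chi)B(v)$, the input is $\|v\|_{L^\infty(|y|\geq K\sqrt{s})}\leq CA^2\,s^{-1/2-\eta}$: on $\{|y|\geq 2K\sqrt{s}\}$ one has $v=v_e$ directly, while on the overlap $\{K\sqrt{s}\leq|y|\leq 2K\sqrt{s}\}$ the polynomial part of $v_b$ is $\mathcal{O}(\|\mathcal{A}\|/s)$ and $v_-$ is $\mathcal{O}(A/s^{1/2+\eta})$, both absorbed by $\|v_e\|_{L^\infty}$ for $s$ large. The exterior pointwise bound then yields $\|B_e\|_{L^\infty}\leq C\|v\|_{L^\infty}^{p'}\leq CA^{2p'}/s^{(1/2+\eta)p'}$. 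I expect the main subtlety to be the case $1<p<2$, where the second derivative of the nonlinearity is singular at zero, so the quadratic bound $|B(v)|\leq Cv^2$ can only be used where $\hat{w}$ is bounded below. This is precisely why the exterior estimate picks up the weaker exponent $p'$, while the interior estimates remain unaffected. Everything else is bookkeeping: identifying which term of the $V_A$ decomposition dominates in each subregion and using that the complement of the blow-up region has exponentially small mass with respect to $\rho$, so its contribution to the low-mode and negative-part estimates is negligible.
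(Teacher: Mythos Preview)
Your proposal is correct and follows essentially the same route as the paper: both rely on the two pointwise bounds $|\chi B(v)|\leq C|v|^2$ (valid on $\mathrm{supp}\,\chi$ because $\hat w$ stays bounded below there) and $|B(v)|\leq C|v|^{p'}$ globally, then substitute the $V_A$ bounds and project. Your explanation of \emph{why} the quadratic bound holds on the inner region is in fact more explicit than the paper, which simply cites Lemma~3.15 of \cite{MZdm97}.

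One small numerical slip: for the polynomial--polynomial contribution to $B_-$ you write that $s^{-4}(1+|y|^4)/(1+|y|^3)=\mathcal{O}(s^{-5/2})$ on $\mathrm{supp}\,\chi$, but the correct bound is $\mathcal{O}(s^{-7/2})$ (since $(1+|y|^4)/(1+|y|^3)\leq C\sqrt{s}$ there). This matters because $s^{-5/2}$ is \emph{not} dominated by $A^4 s^{-(5/2+2\eta)}$ for $\eta>0$, whereas $s^{-7/2}$ is; the paper obtains precisely $C\|\mathcal A\|^2/\tau^{7/2}$ for this term. With that correction your argument goes through unchanged.
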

\begin{proof} The proof follows directly from the definition of $V_A$ and the fact that  
$$|\chi(y,\tau)B(v(y,\tau)| \leq C|v(y,\tau)|^2, \quad |B(v(y,\tau))| \leq C|v(y,\tau)|^{p'}$$
with $p' = \min\{p,2\}$ (see Lemma 3.15 in \cite{MZdm97} for a similar proof of this fact). Indeed, $v(\tau) \in V_A(\tau)$ implies 
\begin{equation*}
\forall y \in \RN, \quad |v(y,\tau)| \leq \frac{CA^2}{\tau^{2 + \eta}}(1 + |y^3|) + \frac{\|\mathcal{A}\|}{\tau^2}(1 + |y|^2).
\end{equation*}
By definition of $B_m(\tau)$, we see that for $m = 0, 1, 2$,
\begin{align*}
\forall \beta \in \mathbb{N}^N, \;|\beta| = m, \quad |B_\beta(\tau)| &= \frac{1}{\|\phi_\beta(y)\|^2_{L^2_\rho}}\left|\int_{\RN}\phi_\beta(y)B(v(y, \tau))\chi(y, \tau)\rho(y)dy \right|\\
&\leq C\left(\frac{A^4}{\tau^{4 + 2\eta}} + \frac{\|\mathcal{A}\|^2}{\tau^4}\right) \leq  \frac{C}{\tau^4},
\end{align*}
for $\tau$ sufficient large. This yields the estimates for $B_m(\tau)$, $m = 0, 1, 2$.\\
As for $B_-(\tau)$, we write 
\begin{align*}
|\chi(y,\tau)B(v(y,\tau)| &\leq C|v(y,\tau)|^2\\
& \leq C \left(\sum_{m = 0}^2|v_m(\tau)|^2 (1 + |y|^2)^2 + |v_-(y,\tau)|^2 + |v_e(y,\tau)|^2\right)\\
&\leq C \left\{\frac{A^4}{\tau^{4 + 2\eta}}(1 + |y|^6) + \frac{\|\mathcal{A}\|^2}{\tau^4}(1 + |y|^4)\right\}\mathbf{1}_{\{|y| \leq 2K\sqrt{s}\}} + \frac{A^4}{\tau^{1 + 2\eta}}\mathbf{1}_{\{|y| \geq K\sqrt{s}\}}\\
& \leq C\left(\frac{A^4}{\tau^{5/2 + 2\eta}} + \frac{\|\mathcal{A}\|^2}{\tau^{7/2}} \right)(1 + |y|^3)\\
& \leq \frac{CA^4}{\tau^{5/2 + 2\eta}}(1 + |y|^3),
\end{align*}
where $\mathbf{1}_{X}$ is the characteristic function of a set $X$.\\
Hence, 
$$|B_-(y, \tau)| \leq |\chi(y,\tau)B(v(y,\tau)| + \sum_{m = 0}^2|B_m(\tau)|(|y|^2 + 1) \leq \frac{CA^4}{\tau^{5/2 + 2\eta}}(1 + |y|^3).$$
Since $|B(v)| \leq C|v|^{p'}$, we have
$$\|B_e(\tau)\|_{L^\infty} \leq \|B(\tau)\|_{L^\infty} \leq C\|v(\tau)\|^{p'}_{L^\infty} \leq \frac{CA^{2p'}}{\tau^{(1/2 + \eta)p'}}.$$
This concludes the proof of Lemma \ref{lemm:apestBv}.
\end{proof}
\begin{lemm}[\textbf{(Estimates for $(\gamma - \alpha)v$)}] \label{lemm:apestR} For all $A > 1$, there exists $\sigma_4(A) > 0$ such that for all $\tau \geq \sigma_4$, $v(\tau) \in V_A(\tau)$ implies
$$
m = 0,1,2,\; |R_m(\tau)| \leq \frac{C\log \tau}{\tau^4}, \; \left\|\frac{R_-(y,\tau)}{1 + |y|^3}\right\|_{L^\infty} \leq \frac{C\log \tau}{\tau^{5/2}}, \quad \|R_e(\tau)\|_{L^\infty} \leq \frac{CA^2}{\tau^{1/2 + \eta}}\left(\frac{1}{\sqrt{\tau}}\right)^{\bar{p}}, 
$$
where $R(y,s) = (\gamma(y,s) - \alpha(y,s))v(y,s)$ and $\bar{p} = \min\{p-1,1\}$.
\end{lemm}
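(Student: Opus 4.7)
The plan is to establish a two-scale estimate on $\gamma-\alpha$ (a sharp one in the blow-up region and a cruder global one), and then combine it with the $V_A$ bounds on $v$ by a straightforward mode-by-mode projection. In the inner region $|y|\leq 2K\sqrt{\tau}$, which contains the support of $\chi$, I will exploit the fact that $\hat u\in\mathbb{B}'_{0,T}$ was itself constructed by the topological method of \cite{MZdm97} recalled in Appendix \ref{ap:B}, so that $\hat w-\varphi$ lies in the shrinking set $\tilde V_A(\tau)$ of \eqref{ap:behaEx}--\eqref{def:V_AMZ}. Summing the five coordinates of the decomposition of $\hat w-\varphi$ on $\mathrm{supp}(\chi)$ yields
\[
|\hat w(y,\tau)-\varphi(y,\tau)|\leq \frac{C\log\tau}{\tau^{2}}(1+|y|^{3}),\qquad |y|\leq 2K\sqrt{\tau},
\]
the logarithmic factor coming from the null-mode bound $|(\hat w-\varphi)_{2}(\tau)|\leq A^{2}\log\tau/\tau^{2}$. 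Since $\varphi$ is uniformly bounded away from $0$ on this set and $\hat w$ is $C\tau^{-1/2}$-close to $\varphi$ by \eqref{est:uhat}, the mean value theorem applied to $x\mapsto |x|^{p-1}$ transfers this to
\[
|(\gamma(y,\tau)-\alpha(y,\tau))\,\chi(y,\tau)|\leq \frac{C\log\tau}{\tau^{2}}(1+|y|^{3}).
\]
Globally, I will only use the rough bound \eqref{est:uhat} combined with the H\"older continuity of exponent $\bar p=\min(p-1,1)$ of $x\mapsto |x|^{p-1}$ on bounded sets (both $\hat w$ and $\varphi$ being globally bounded), to obtain $\|\gamma(\tau)-\alpha(\tau)\|_{L^{\infty}}\leq C\tau^{-\bar p/2}$.

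With these two estimates on $\gamma-\alpha$ at hand, the projections of $R=(\gamma-\alpha)v$ reduce to routine manipulations. In the blow-up region, summing the terms of Definition \ref{def:V_A} gives the pointwise bound $|v(y,\tau)|\leq C\tau^{-2}(1+|y|^{3})$ for $\tau$ large enough (depending on $A$ and $\|\mathcal{A}\|$), so that
\[
|\chi(y,\tau) R(y,\tau)|\leq \frac{C\log\tau}{\tau^{4}}(1+|y|^{6}).
\]
Testing this against $\phi_{\beta}(y)\rho(y)$ as in \eqref{def:Projector} immediately yields $|R_{m}(\tau)|\leq C\log\tau/\tau^{4}$ for $m=0,1,2$. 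For the negative part, I write $R_{-}=\chi R-\sum_{m=0}^{2}P_{m}(\chi R)$ and control each piece by the same pointwise bound; the factor $1+|y|^{3}\leq C\tau^{3/2}$ available on $\mathrm{supp}(\chi)$ converts $C\log\tau\cdot\tau^{-4}(1+|y|^{6})$ into $C\log\tau\cdot\tau^{-5/2}(1+|y|^{3})$, giving the announced $L^\infty$-bound on $R_-/(1+|y|^3)$. Finally, $R_{e}=(1-\chi)(\gamma-\alpha)v$ is handled by multiplying the global bound on $\gamma-\alpha$ by $\|(1-\chi)v(\tau)\|_{L^{\infty}}=\|v_{e}(\tau)\|_{L^{\infty}}\leq A^{2}\tau^{-1/2-\eta}$.

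The only delicate point is the sharp inner estimate on $|\gamma-\alpha|$: relying only on \eqref{est:uhat} gives the much weaker pointwise rate $C\tau^{-1/2}$, which is insufficient to produce the $C\tau^{-4}$ scale required in the mode projections. The refinement from $\tau^{-1/2}$ to $\log\tau\cdot\tau^{-2}(1+|y|^3)$ genuinely uses that $\hat u$ belongs to the quantitative shrinking set of \cite{MZdm97} recalled in Appendix \ref{ap:B}, not merely that it satisfies \eqref{est:uhat}. Once this input is in place, the proof is essentially parallel to the analogous estimates in \cite{MZdm97}.
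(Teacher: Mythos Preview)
Your proposal is correct and follows essentially the same approach as the paper: both rely on the two bounds $|(\gamma-\alpha)\chi|\leq C|\hat w-\varphi|$ (valid since $\varphi$ stays bounded away from $0$ on $\mathrm{supp}\,\chi$) and $|\gamma-\alpha|\leq C|\hat w-\varphi|^{\bar p}$, feed in the pointwise consequence $|\hat w-\varphi|\leq C\log\tau\cdot\tau^{-2}(1+|y|^{3})$ of $\hat w-\varphi\in\tilde V_A(\tau)$ together with $\|\hat w-\varphi\|_{L^\infty}\leq C\tau^{-1/2}$, and combine with the $V_A$ bounds on $v$ exactly as you describe. Your remark that the refined inner estimate genuinely requires $\hat w-\varphi\in\tilde V_A$ rather than merely \eqref{est:uhat} is precisely the point the paper uses as well.
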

\begin{proof} The proof is similar to the proof of Lemma \ref{lemm:apestBv}. One can remark from the definition of $\gamma$ and $\alpha$ given in \eqref{def:alpha} and \eqref{def:gamma} (respectively) that
$$|(\gamma(y,s) - \alpha(y,s))\chi(y,s)| \leq C|\hat{w}(y,s) - \varphi(y,s)|,$$
and 
$$|(\gamma(y,s) - \alpha(y,s))| \leq C|\hat{w}(y,s) - \varphi(y,s)|^{\bar{p}},$$ 
where $\bar{p} = \min\{p-1,1\}$.\\
Note from Appendix \ref{ap:B} that $\hat{w}(s) - \varphi(s) \in \tilde{V}_A(s)$ which gives 
$$\forall y \in \RN, \;\; |\hat{w}(y,s) - \varphi(y,s)| \leq \frac{C\log s}{s^2}(1 + |y|^3),$$
and 
$$\|\hat{w}(s) - \varphi(s)\|_{L^\infty} \leq \frac{C}{\sqrt{s}},$$
for $s$ large enough. Using these estimates together with the definition of $V_A(s)$ yields the results. This concludes the proof of Lemma \ref{lemm:apestR}.
\end{proof}

\section{Proof of Lemma \ref{lemm:BKespri}.}\label{ap:pri}
In this appendix, we give the proof of Lemma \ref{lemm:BKespri}. The proof follows from the techniques of Bricmont and Kupiainen \cite{BKnon94} with some additional care, since we give the explicit dependence of the bounds in terms of all the components of initial data. As mentioned earlier, the proof relies mainly on the understanding of the behavior of the kernel $\mathcal{K}(s,\sigma,y,x)$ (see \eqref{def:kernel}). This behavior follows from a perturbation method around $e^{(s-\sigma)\mathcal{L}}(y,s)$, where the kernel of $e^{t\mathcal{L}}$ is given by Mehler's formula:
\begin{equation}\label{for:kernalL}
e^{t\mathcal{L}}(y,x) = \frac{e^t}{(4\pi (1 - e^{-t}))^{\frac N2}} \exp\left[-\frac{|ye^{-\frac t2} - x|^2}{4 (1 - e^{-t})}\right].
\end{equation}
By definition \eqref{def:kernel} of $\mathcal{K}$, we use a Feynman-Kac representation for $\mathcal{K}$:
\begin{equation}\label{for:kernelK}
\mathcal{K}(s,\sigma,y,x) = e^{(s-\sigma)\mathcal{L}}(y,x) \int d\mu_{yx}^{s-\sigma}(\omega)e^{\int_0^{s-\sigma}\alpha(\omega(\tau), \sigma +\tau)d\tau},
\end{equation}
where $d\mu_{yx}^{s-\sigma}$ is the oscillator measure on the continuous paths $\omega: [0, s-\sigma] \to \mathbb{R}^N$ with $\omega(0) = x$, $\omega(s-\sigma) = y$, i.e. the Gaussian probability measure with covariance kernel 
\begin{align}
\Gamma(\tau, \tau') &= \omega_0(\tau)\omega_0(\tau')\nonumber\\
&+2 \left(e^{-\frac{1}{2}|\tau - \tau'|} -  e^{-\frac{1}{2}|\tau + \tau'|} + e^{-\frac{1}{2}|2(s - \sigma) + \tau - \tau'|} -  e^{-\frac{1}{2}|2(s - \sigma) - \tau - \tau'|} \right),\label{def:GammaCo}
\end{align}
which yields $\int d\mu_{yx}^{s-\sigma}(\omega) \omega(\tau) = \omega_0(\tau)$, with 
$$\omega_0(\tau) = \left(\sinh((s-\sigma)/2) \right)^{-1}\left(y\sinh(\frac{\tau}{2}) + x\sinh(\frac{s-\sigma -\tau}{2}) \right).$$
In view of \eqref{for:kernelK}, we can consider the expression for $\mathcal{K}$ as a perturbation of $e^{(s-\sigma)\mathcal{L}}$. Since our potential $\alpha$ defined in \eqref{def:alpha} is the same as in \cite{BKnon94}, we recall some basic properties of the kernel $\mathcal{K}$ in the following lemma:

\begin{lemm}\label{lemm:baskenelK} For all $s \geq \sigma \geq \max\{s_1,1\}$ with $s \leq 2\sigma$ and $s_1$ given in Lemma \ref{lemm:apEstalp}, for all $(y,x) \in \mathbb{R}^N$, we have 
\begin{itemize}
\item[a)] $|\K(s,\sigma,y,x)| \leq Ce^{(s - \sigma)\mL}(y,x)$.
\item[b)] $\K(s,\sigma,y,x) = e^{(s - \sigma)\mL}(y,x)\left(1 + P_2(y,x) + P_4(y,x) \right)$, where 
$$|P_2(y,x)| \leq \frac{C(s - \sigma)}{s}(1 + |y| + |x|)^2,$$
$$ \text{and} \;\;|P_4(y,x)| \leq \frac{C(s - \sigma)(1 + s - \sigma)}{s^2} (1 + |y| + |x|)^4.$$
\item[c)] $\|\K(s,\sigma)(1 - \chi)\|_{L^\infty} \leq Ce^{-\frac{(s - \sigma)}{p}}$.
\end{itemize}
\end{lemm}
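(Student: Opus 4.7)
The plan is to exploit the Feynman--Kac representation \eqref{for:kernelK}, treating $\mathcal{K}(s,\sigma)$ as a perturbation of the explicit semigroup $e^{(s-\sigma)\mathcal{L}}$ whose kernel is given by Mehler's formula \eqref{for:kernalL}. The three items correspond to increasingly refined ways of controlling the Feynman--Kac exponential
$$\mathcal{E}(\omega) := \exp\left(\int_0^{s-\sigma}\alpha(\omega(\tau),\sigma+\tau)\,d\tau\right)$$
against the oscillator probability measure $d\mu_{yx}^{s-\sigma}$, using the quantitative bounds on $\alpha$ supplied by Lemma \ref{lemm:apEstalp} and by the asymptotic property \eqref{equ:asymalp}.

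For part (a), the first bound of Lemma \ref{lemm:apEstalp} yields $\alpha(\cdot,\tau)\leq C/\tau$, so for every path $\omega$,
$$\int_0^{s-\sigma}\alpha(\omega(\tau),\sigma+\tau)\,d\tau \leq C\int_0^{s-\sigma}\frac{d\tau}{\sigma+\tau} = C\log\frac{s}{\sigma}\leq C\log 2,$$
thanks to the hypothesis $s\leq 2\sigma$. Hence $\mathcal{E}(\omega)\leq e^{C\log 2}$ uniformly in $\omega$, and integrating against the probability measure $d\mu_{yx}^{s-\sigma}$ yields (a).

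For part (b), I use the third estimate of Lemma \ref{lemm:apEstalp} to decompose
$$\alpha(z,\tau) = -\frac{|z|^2-2N}{4\tau} + \tilde\alpha(z,\tau),\qquad |\tilde\alpha(z,\tau)|\leq \frac{C}{\tau^2}(1+|z|^4),$$
and expand $\mathcal{E}(\omega)=1+\mathcal{I}(\omega)+\tfrac12\mathcal{I}(\omega)^2 e^{\xi(\omega)}$ with $\mathcal{I}(\omega):=\int_0^{s-\sigma}\alpha(\omega(\tau),\sigma+\tau)\,d\tau$ and $|\xi(\omega)|\leq C\log 2$ from part (a). Taking expectations under $d\mu^{s-\sigma}_{yx}$, the quadratic piece of $\mathcal{I}$ produces $P_2$: using the explicit mean $|\omega_0(\tau)|\leq C(1+|y|+|x|)$ from \eqref{for:kernelK} and the identity $\int \omega(\tau)^2\,d\mu = \omega_0(\tau)^2 + \Gamma(\tau,\tau)$ with $\Gamma(\tau,\tau)\leq C$ by \eqref{def:GammaCo}, one obtains $|P_2|\leq C(s-\sigma)s^{-1}(1+|y|+|x|)^2$. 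The contribution of $\tilde\alpha$ together with $\tfrac12\mathcal{I}^2$ produces $P_4$; the quartic Gaussian moment $\int|\omega(\tau)|^4 d\mu \leq C(1+|y|+|x|)^4$ combined with the $\tau^{-2}$ decay and the time integration gives respectively the prefactors $(s-\sigma)/s^2$ and $(s-\sigma)^2/s^2$, hence the announced bound on $P_4$.

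For part (c), the key is the asymptotic \eqref{equ:asymalp}: outside the parabolic region, $\alpha(z,\tau)\approx -p/(p-1)$. Applied via $(1-\chi)$, the integration variable $x$ lies in $\{|x|\geq K\sqrt{\sigma}\}$; I split paths according to whether $|\omega(\tau)|\geq c\sqrt{\sigma+\tau}$ for all $\tau\in[0,s-\sigma]$ or not. On the first set, \eqref{equ:asymalp} gives $\int\alpha\leq (-\tfrac{p}{p-1}+\epsilon)(s-\sigma)$, and combining with the $e^{s-\sigma}$ prefactor of Mehler's kernel leaves a decay faster than $e^{-(s-\sigma)/p}$ (since $1-\tfrac{p}{p-1}=-\tfrac{1}{p-1}<-\tfrac{1}{p}$). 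On the complementary set, the Gaussian probability that a bridge starting in the exterior enters the interior before time $s-\sigma$ decays exponentially in $s-\sigma$ (as $x$ lies well outside and $\omega_0$ drifts only moderately), which beats the trivial bound $e^{C(s-\sigma)}$ on $\mathcal{E}$ inside the blow-up region thanks to the smallness of $K^{-1}$.

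The main obstacle will be part (c), specifically quantifying via \eqref{def:GammaCo} the Gaussian excursion probability for bridges that cross between the exterior and interior of the parabolic region, and checking that the loss on the interior portion is at most polynomial in $s-\sigma$ so that the gain $e^{-p(s-\sigma)/(p-1)}$ from the exterior stretch is preserved down to the weaker rate $e^{-(s-\sigma)/p}$ stated in the lemma. Once this path decomposition is established, the remainder reduces to standard Gaussian moment computations using the covariance \eqref{def:GammaCo}, and parts (a) and (b) follow from the pointwise bound and the Taylor expansion of $\alpha$ as sketched above.
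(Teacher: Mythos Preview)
Your proof of part (a) is exactly the paper's: the upper bound $\alpha\le C/\tau$, the integral $\int_0^{s-\sigma}(\sigma+\tau)^{-1}d\tau=\log(s/\sigma)\le\log 2$ under $s\le 2\sigma$, and the fact that $d\mu^{s-\sigma}_{yx}$ is a probability. For (b) and (c) the paper gives no argument and simply cites Lemmas~5 and~7 of Bricmont--Kupiainen \cite{BKnon94}; your Feynman--Kac expansion for (b) and path-splitting for (c) are precisely the strategy of that reference, so in spirit you are on the same route.

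One genuine gap in your sketch of (c): the sentence ``the Gaussian probability that a bridge starting in the exterior enters the interior before time $s-\sigma$ decays exponentially in $s-\sigma$'' is false as written. The operator bound $\|\K(s,\sigma)(1-\chi)\|_{L^\infty\to L^\infty}$ must hold for \emph{every} $y\in\RN$; when $|y|\le K\sqrt{s}$ the bridge ends in the interior and must cross, so that probability is $1$. Also, on the interior portion you do not face a loss of $e^{C(s-\sigma)}$: part (a) already gives $\mathcal{E}(\omega)\le C$ uniformly, so the interior contributes only a bounded factor. The actual argument (as in \cite{BKnon94}) introduces the stopping time $\tau^*=\inf\{\tau:|\omega(\tau)|\le K'\sqrt{\sigma+\tau}\}$ for some $K'<K$; on $[0,\tau^*]$ one has $\alpha\le -\tfrac{p}{p-1}+\epsilon$ by \eqref{equ:asymalp}, on $[\tau^*,s-\sigma]$ one uses $\alpha\le C/(\sigma+\tau)$, hence $\mathcal{E}(\omega)\le C\,e^{(-\frac{p}{p-1}+\epsilon)\tau^*}$. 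The remaining task is to show that $\int e^{(-\frac{p}{p-1}+\epsilon)\tau^*}\,d\mu^{s-\sigma}_{yx}$, multiplied by the $e^{s-\sigma}$ from Mehler's kernel, is $\le Ce^{-(s-\sigma)/p}$; this uses the Gaussian tail of the bridge (via \eqref{def:GammaCo}) to control the event $\{\tau^*\le t\}$ for small $t$ when $|x|\ge K\sqrt{\sigma}$, together with the trivial bound for $t$ close to $s-\sigma$. You correctly flag this excursion estimate as the main obstacle; just be aware that the dichotomy is on the value of $\tau^*$, not on whether the bridge ever enters.
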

\begin{proof}$a)$ From the definition \eqref{for:kernelK} of $\K$ and the fact that $\alpha(y,s) \leq \frac{C}{s}$ (see Lemma \ref{lemm:apEstalp}), we have
\begin{align*}
|\mathcal{K}(s,\sigma,y,x)| &\leq e^{(s-\sigma)\mathcal{L}}(y,x) \int d\mu_{yx}^{s-\sigma}(\omega)e^{\int_0^{s-\sigma}C(\sigma +\tau)^{-1}d\tau}\\
&\leq Ce^{(s-\sigma)\mathcal{L}}(y,x) \int d\mu_{yx}^{s-\sigma} (\omega) \leq C e^{(s-\sigma)\mathcal{L}}(y,x),
\end{align*}
since $s \leq 2\sigma$ and $d\mu_{yx}^{s-\sigma}$ is a probability.\\
For parts $b)$ and $c)$, the reader will find its proof in \cite{BKnon94} (see Lemmas 5 and 7). Although those proofs are written in the one-dimensional case, but they also hold in higher dimensional cases. 
\end{proof}

Before going to the proof of Lemma \ref{lemm:BKespri}, we would like to state some basic estimates which will be frequently used in the proof. 
\begin{lemm}\label{lemm:apC2} For $K$ large enough, we have the following estimates:\\
$a)$ For any polynomial $P$, 
\begin{equation}\label{pro:kerL1}
\int P(y)\mathbf{1}_{\{|y| \geq K\sqrt{s}\}} \rho(y)dy \leq C(P)e^{-s}.
\end{equation}
$b)$ Let $r \geq 0$ and $|f(x)| \leq (1 + |x|)^r$, then 
\begin{equation}\label{pro:kerL2}
|(e^{t\mL}f)(y)| \leq C e^t(1 + e^{-\frac t2}|y|)^r,
\end{equation}
\end{lemm}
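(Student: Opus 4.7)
The plan is to prove both estimates by direct computation using the explicit forms of the Gaussian weight $\rho$ and of Mehler's formula for the heat semigroup $e^{t\mL}$ given in \eqref{for:kernalL}.

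For part $(a)$, the strategy is to split the Gaussian weight $\rho(y) = (4\pi)^{-N/2} e^{-|y|^2/4}$ into two factors, one absorbing the polynomial $P$ and the other delivering the decay in $s$. Concretely, I write $e^{-|y|^2/4} = e^{-|y|^2/8} \cdot e^{-|y|^2/8}$. Bounding $|P(y)| \leq C(P)(1+|y|)^{\deg P}$, the factor $(1+|y|)^{\deg P} e^{-|y|^2/8}$ is uniformly integrable on $\RN$, while on the region $\{|y| \geq K\sqrt{s}\}$ the second factor is at most $e^{-K^2 s/8}$. Choosing $K$ so that $K^2 \geq 8$ then yields $\int P(y)\mathbf{1}_{\{|y| \geq K\sqrt{s}\}} \rho(y) dy \leq C(P) e^{-s}$.

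For part $(b)$, I start from Mehler's formula \eqref{for:kernalL} and perform the Gaussian substitution $x = ye^{-t/2} + 2\sqrt{1-e^{-t}}\, z$, whose Jacobian exactly cancels the $(1-e^{-t})^{N/2}$ normalization, leading to
\begin{equation*}
(e^{t\mL}f)(y) = \frac{e^t}{\pi^{N/2}} \int_{\RN} e^{-|z|^2} f\bigl(y e^{-t/2} + 2\sqrt{1-e^{-t}}\, z\bigr)\, dz.
\end{equation*}
Using $|f(x)| \leq (1+|x|)^r$, the triangle inequality, and the elementary bound $(1+a+b)^r \leq C_r[(1+a)^r + b^r]$ for $a,b \geq 0$, I split the integrand into the piece $(1+e^{-t/2}|y|)^r$ times a standard Gaussian mass (a constant), and a piece bounded by $(1-e^{-t})^{r/2} \int |z|^r e^{-|z|^2}dz \leq C(r)$. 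Since $(1-e^{-t})^{r/2} \leq 1 \leq (1+e^{-t/2}|y|)^r$, both pieces fit into the desired form, giving $|(e^{t\mL}f)(y)| \leq C e^t (1+e^{-t/2}|y|)^r$.

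Neither part presents a genuine obstacle; both reduce to routine Gaussian estimates. The only delicate points are essentially bookkeeping: in $(a)$ one must track the constants carefully so that the coefficient in the exponent of $s$ can be made $\geq 1$ by enlarging $K$, and in $(b)$ one must use the triangle inequality and the splitting $(1+a+b)^r \leq C_r[(1+a)^r + b^r]$ in precisely the right way so that the polynomial factor is absorbed into $(1+e^{-t/2}|y|)^r$ rather than the weaker $(1+|y|)^r$, thereby reflecting the sharper decay induced by the transport term $-\tfrac{y}{2}\cdot \nabla$ of $\mL$.
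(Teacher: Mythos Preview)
Your proof is correct and matches the paper's own approach, which merely states that $(a)$ ``follows from a direct calculation'' and $(b)$ ``follows from the explicit expression \eqref{for:kernalL} by a simple change of variables.'' You have supplied precisely the details the paper omits: the Gaussian splitting $e^{-|y|^2/4}=e^{-|y|^2/8}e^{-|y|^2/8}$ for $(a)$ and the substitution $x=ye^{-t/2}+2\sqrt{1-e^{-t}}\,z$ in Mehler's formula for $(b)$.
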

\begin{proof} $a)$ follows from a direct calculation. $b)$ follows from the explicit expression \eqref{for:kernalL} by a simple change of variables. 
\end{proof}

\noindent Let us now give the proof of Lemma \ref{lemm:BKespri}.
\begin{proof}[\textbf{Proof of Lemma \ref{lemm:BKespri}}] Let us consider $\lambda > 0$, $\sigma_0 \geq \lambda$, $\sigma \geq \sigma_0$ and $\vartheta(\sigma)$ satisfying \eqref{equ:boundpsisigma}. We want to estimate some components of $\theta(y,s) = \K(s,\sigma)\vartheta(\sigma)$ for each $s \in [\sigma, \sigma + \lambda]$. Since $\sigma \geq \sigma_0 \geq \lambda$, we have 
\begin{equation}\label{equ:ressigma}
\sigma \leq s \leq 2\sigma.
\end{equation}
Therefore, up to a multiplying constant, any power of any $\tau \in [\sigma, s]$ will be bounded systematically by the same power of $s$.

\noindent \textbf{- Estimate for $\theta_e$}: By definition, we write
$$\theta_e(y,s) = (1 - \chi(y,s))\K(s,\sigma)\vartheta(\sigma) = (1 - \chi(y,s))\K(s,\sigma) \left(\vartheta_b(\sigma)+ \vartheta_e(\sigma)\right).$$
Using $c)$ of Lemma \ref{lemm:baskenelK}, we have
$$\|(1 - \chi(y,s))\K(s,\sigma)\vartheta_e(\sigma)\|_{L^\infty} \leq Ce^{-\frac{s - \sigma}{p}}\|\vartheta_e(\sigma)\|_{L^\infty}.$$
It remains to bound $(1 - \chi(y,s))\K(s,\sigma) \vartheta_b(\sigma)$. To this end, we write 
$$\vartheta_b(x,\sigma) = \vartheta_0(\sigma) + \vartheta_1(\sigma)\cdot x + \frac{1}{2}x^T \vartheta_2(\sigma)x - tr(\vartheta_2(\sigma)) + \frac{\vartheta_-(x,\sigma)}{1 + |x|^3}(1 + |x|^3),$$
then use the fact that $\chi(x,\sigma)|x|^k \leq C\sigma^{k/2} \leq Cs^{k/2}$ for $k \in \mathbb{N}$, and $a)$ of Lemma \ref{lemm:baskenelK} to derive
\begin{align*}
\|(1 - \chi(y,s))\K(s,\sigma) \vartheta_b(x,\sigma)\|_{L^\infty} &\leq Ce^{s - \sigma}\sum_{l=0}^2s^{\frac l2}|\vartheta_l(\sigma)|\\
& + Ce^{s - \sigma}s^{\frac 32}\left\|\frac{\vartheta_-(x,\sigma)}{1 + |x|^3}\right\|_{L^\infty}.
\end{align*}
This yields the bound \eqref{equ:boundThe_e}.\\

\noindent \textbf{- Estimate of $\theta_-$:} By definition and from decomposition \eqref{def:decomv}, we write
\begin{align}
\theta_-(y,s) &= P_-\left[\chi(s)\K(s,\sigma)\vartheta(\sigma)\right] = P_-\left[\chi(s)\K(s,\sigma) \left(\vartheta_0(\sigma) + \sum_{|\beta| = 1}\vartheta_\beta(\sigma)\phi_\beta + \sum_{|\beta| = 2}\vartheta_\beta(\sigma)\phi_\beta \right)\right]\nonumber\\
&+ P_-\left[\chi(s)\K(s,\sigma)\vartheta_-(\sigma)\right] + P_-\left[\chi(s)\K(s,\sigma)\vartheta_e(\sigma)\right]:= I + II + III. \label{bound:theta_}
\end{align}
In order to bound $I$, we write $\K(s,\sigma) = \K(s,\sigma) - e^{(s - \sigma)\mL} + e^{(s - \sigma)\mL}$, then we use the fact that $e^{(s-\sigma)\mL}\phi_\beta = e^{(1 - \frac l2)(s - \sigma)}\phi_\beta$ for all $|\beta| = l$, part $b)$ of Lemma \ref{lemm:baskenelK} and \eqref{pro:kerL2} to derive for $l = 0,1,2$,
\begin{align*}
\forall |\beta| = l, \quad &\left|\chi(s)\left(\K(s,\sigma) - e^{(s - \sigma)(1 - \frac l2)}\right)\phi_\beta \right|  = \left|\chi(s)e^{(s-\sigma)\mL}\left(P_2 + P_4\right)\phi_\beta\right|\\
&\leq \frac{Ce^{s-\sigma}(s-\sigma)}{s}\chi(y,s)\left(1 + |y|\right)^{2+l} + \frac{Ce^{s-\sigma}(s - \sigma)(1 + s - \sigma)}{s^{2}}\chi(y,s)\left(1 + |y|\right)^{4 + l}\\
&\leq \left(\frac{Ce^{s-\sigma}(s-\sigma)}{s^{1 - \frac 12\delta_{2,l}}} + \frac{Ce^{s-\sigma}(s - \sigma)(1 + s - \sigma)}{s^{\frac 32 - \frac l2}}\right)(|y|^3 + 1).
\end{align*}
From the easy-to-check fact that 
\begin{equation}\label{equ:propFP_}
\text{if}\;\; |f(y)| \leq m(1+ |y|^3), \;\;\text{then}\;\; \left|P_-\left[f(y)\right]\right| \leq Cm(1 + |y|^3),
\end{equation}
we obtain for $l = 0, 1, 2$,
\begin{align}
\forall |\beta| = l, \;\; &\left|P_- \left[\chi(s) \left(\K(s,\sigma) - e^{(s - \sigma)(1 - \frac l2)}\right)(\vartheta_\beta(\sigma) \phi_\beta) \right]\right|\nonumber\\
&\leq \left(\frac{Ce^{s-\sigma}(s-\sigma)}{s^{1 - \frac 12\delta_{2,l}}} + \frac{Ce^{s-\sigma}(s - \sigma)(1 + s - \sigma)}{s^{\frac 32 - \frac l2}}\right)|\vartheta_\beta(\sigma)|(|y|^3 + 1).\label{equ:estP-1}
\end{align}
Note that $P_-(\phi_\beta) = 0$ for all $|\beta| \leq 2$ and that $|(1 - \chi(y,s))\phi_\beta(y)| \leq Cs^{-\frac 32 + \frac l2}(1 + |y|^3)$. Therefore, we have for $l = 0, 1, 2,$
\begin{align}
\forall |\beta| = l, \;\;\left| P_-\left[\chi(s)e^{(s - \sigma)\mL}(\vartheta_\beta(\sigma)\phi_\beta)\right]\right| &= \left| P_-\left[\vartheta_\beta(\sigma) e^{(s - \sigma)(1 - l/2)}\chi(s)\phi_\beta\right]\right|\nonumber\\
& = \left|\vartheta_\beta(\sigma)e^{(s-\sigma)(1 - \frac l2)}P_- \left[\chi(s) \phi_\beta\right]\right|\nonumber\\
& = \left|\vartheta_\beta(\sigma)e^{(s-\sigma)(1 - \frac l2)}P_- \left[(1 - \chi(s))\phi_\beta\right]\right|\nonumber\\
&\leq \frac{Ce^{(s-\sigma)(1 - l/2)}}{s^{\frac 32-\frac l2}}|\vartheta_\beta(\sigma)|(1 + |y|^3).\label{equ:estP-2}
\end{align}
Since the estimates \eqref{equ:estP-1} and \eqref{equ:estP-2} hold for all $|\beta| = l$ with $l = 0, 1, 2$, we then obtain
\begin{equation}\label{bound:I}
|I| \leq \frac{Ce^{s - \sigma}\left((s - \sigma)^2 + 1\right)}{s}\left( |\vartheta_0(\sigma)| + |\vartheta_1(\sigma)| + \sqrt{s}|\vartheta_2(\sigma)|\right)(1 + |y|^3).
\end{equation}

\noindent In order to bound $III$, we use part $a)$ of Lemma \ref{lemm:baskenelK} and the definition \eqref{for:kernalL} of $e^{(s - \sigma)\mL}$ to write
\begin{align*}
\left\|\frac{\chi(y,s)\K(s,\sigma) \vartheta_e(x,\sigma)}{1 + |y|^3} \right\|_{L^\infty} &\leq C e^{s - \sigma} \|\vartheta_e(\sigma)\|_{L^\infty}\\
&\sup_{|y| \leq 2K\sqrt{s},|x| \geq K\sqrt{\sigma}}e^{-\frac{1}{2}\frac{\left|ye^{-(s - \sigma)/2} - x\right|^2}{4(1 - e^{-(s - \sigma)})}}(1+|y|^3)^{-1}\\
&\leq \left\{\begin{array}{ll}
Cs^{-\frac 32}\|\vartheta_e(\sigma)\|_{L^\infty} & \text{if}\; s-\sigma \leq s_*\\
C e^{-s}\|\vartheta_e(\sigma)\|_{L^\infty}& \text{if}\; s-\sigma \geq s_*
\end{array} \right.
\end{align*}
for a suitable constant $s_*$. Using \eqref{equ:propFP_}, we then get
\begin{equation}\label{bound:III}
|III| \leq Cs^{-\frac 32}e^{-(s - \sigma)^2}\|\vartheta_e(\sigma)\|_{L^\infty}(1 + |y|^3).
\end{equation}
We still have to consider $II$. We consider two cases: \\
- \textit{Case 1:} $s - \sigma \leq 1$.  We directly get from part $a)$ of Lemma \ref{lemm:baskenelK} and part $b)$ of Lemma \ref{lemm:apC2} the following:
\begin{align}
|\K(s, \sigma)\vartheta_-(\sigma)| &=  \left| \int\K(s, \sigma, y, x) \frac{\vartheta_-(x, \sigma)}{1  + |x|^3} (1 + |x|^3)dx \right|\nonumber\\
& \leq C\left\|\frac{\vartheta_-(x, \sigma)}{1  + |x|^3}\right\|_{L^\infty} \int e^{(s - \sigma)\mathcal{L}}(y, x)(1 + |x|^3)dx\nonumber\\
& \leq C\left\|\frac{\vartheta_-(x, \sigma)}{1  + |x|^3}\right\|_{L^\infty} e^{s - \sigma}(1 + |y|^3)\nonumber\\
& \leq C\left\|\frac{\vartheta_-(x, \sigma)}{1  + |x|^3}\right\|_{L^\infty} e^{-\frac{s - \sigma}{2}}(1 + |y|^3) \quad \text{with}\; \; s - \sigma \leq 1, \label{case1:s_sigmal1}
\end{align}

\noindent - \textit{Case 2:} $s - \sigma \geq 1$. We proceed as in \cite{BKnon94} and write
\begin{equation}\label{equ:tmpKe}
\K(s,\sigma)\vartheta_-(\sigma) = \int dx e^{\frac{|x|^2}{4}}\K(s,\sigma)(\cdot,x)f(x) =\int dx G(\cdot,x) E(\cdot,x)f(x),
\end{equation}
where 
\begin{align}
f(x) &= e^{-\frac{|x|^2}{4}}\vartheta_-(x,\sigma),\label{def:apf}\\
G(y,x) &= \frac{e^{s - \sigma}e^{\frac{|x|^2}{4}}}{\left(4\pi(1 - e^{-(s - \sigma)}\right)^{\frac N2}}e^{-\frac{|ye^{-(s - \sigma)/2} - x|^2}{4(1 - e^{-(s - \sigma)})}},\label{def:apG}\\
E(y,x) &= \int d\mu^{s - \sigma}_{yx}(\omega)e^{\int_0^{s - \sigma}\alpha(\omega(\tau),\sigma +\tau)d\tau}.\label{def:apE}
\end{align}
We claim the following lemma whose proof will be given later:
\begin{lemm}\label{ap:lemmC3} Assume that
\begin{equation}\label{equ:apCong}
\int_{\RN} g(x)dx = 0 \;\; \text{and} \;\; |g(x)| \leq A\frac{(1 + |x|^{q + N -1})}{|x|^{N-1}}e^{-\frac{|x|^2}{4}} \;\; \text{for some}\;\; A > 0, \; q \geq 1.
\end{equation}
Then, we can define $g^{(-1)}: \RN \to \RN$ the "antiderivative" of $g$ such that \\
$(i)\;$ $\text{div}\; g^{(-1)}(x) = g(x)$.\\
$(ii)\,$ $\left|g^{(-1)}(x)\right| \leq CA\dfrac{(1 + |x|^{q+ N - 2})}{|x|^{N-1}}e^{-\frac{|x|^2}{4}}$.
\end{lemm}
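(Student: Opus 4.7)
The plan is to write down an explicit radial antiderivative. The key structural observation is that the vector field $x/|x|^N$ is divergence-free on $\mathbb{R}^N\setminus\{0\}$ (it is, up to a constant, the gradient of the Newtonian fundamental solution), so any field of the form $\phi(x)\,x/|x|^N$ has divergence $|x|^{1-N}\partial_r\phi$. Exploiting this, I would set, for $x\ne 0$,
\[
g^{(-1)}(x) \;=\; -\,\frac{x}{|x|^N}\int_{|x|}^{\infty} r^{N-1}\,g\!\left(r\,\frac{x}{|x|}\right)\,dr.
\]
The choice of the range $[|x|,\infty)$ rather than $[0,|x|]$ is forced by (ii): only integration from $|x|$ to $\infty$ produces the Gaussian factor $e^{-|x|^2/4}$ on the right-hand side.

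For (i), write $g^{(-1)}=\phi\,(x/|x|^N)$ with $\phi(r\omega)=-\int_r^{\infty} s^{N-1}g(s\omega)\,ds$ for $\omega=x/|x|$ and $r=|x|$. The product rule gives
\[
\mathrm{div}\,g^{(-1)} \;=\; \phi\,\mathrm{div}\!\bigl(x/|x|^N\bigr)+\frac{x}{|x|^N}\!\cdot\!\nabla\phi,
\]
where $\mathrm{div}(x/|x|^N)=0$ on $\mathbb{R}^N\setminus\{0\}$ by direct computation, and $(x/|x|^N)\!\cdot\!\nabla\phi=|x|^{1-N}\partial_r\phi(r\omega)|_{r=|x|}=|x|^{1-N}\cdot|x|^{N-1}g(x)=g(x)$. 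This gives the identity pointwise for $x\ne 0$; the assumption $\int g=0$ serves to make this identity hold distributionally on all of $\mathbb{R}^N$, by killing the atomic contribution at the origin coming from the singularity of $x/|x|^N$.

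For (ii), the hypothesis on $g$ yields
\[
|g^{(-1)}(x)| \;\leq\; \frac{A}{|x|^{N-1}}\int_{|x|}^{\infty}(1+r^{q+N-1})\,e^{-r^2/4}\,dr,
\]
so the whole statement reduces to the Gaussian-tail bound
\[
\int_{|x|}^{\infty}(1+r^{q+N-1})\,e^{-r^2/4}\,dr \;\leq\; C(q,N)\,(1+|x|^{q+N-2})\,e^{-|x|^2/4},
\]
uniformly in $|x|\geq 0$. For large $|x|$, I would iterate the integration-by-parts identity $r^{k+1}e^{-r^2/4}=-2\,r^k\,(e^{-r^2/4})'$, namely $\int_{|x|}^{\infty}r^{k+1}e^{-r^2/4}dr=2|x|^k e^{-|x|^2/4}+2k\int_{|x|}^{\infty}r^{k-1}e^{-r^2/4}dr$, finitely many times until the remaining exponent is $\leq 0$; at that point I use the standard $\int_{|x|}^{\infty}e^{-r^2/4}dr\leq C e^{-|x|^2/4}$. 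For $|x|$ in a bounded range the integral is trivially bounded by a constant, which is absorbed into $C(1+|x|^{q+N-2})e^{-|x|^2/4}$ since this lower-bound is itself bounded below by a positive constant on compact sets.

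The main (and only real) obstacle I expect is the distributional interpretation of (i) near the origin: for $N\geq 2$ the field $x/|x|^N$ is not locally integrable, so one must check that the would-be delta mass at $0$ cancels; this is exactly where the hypothesis $\int g=0$ is invoked. Once this is handled, both (i) and (ii) follow from elementary calculus and the Gaussian-tail estimate above.
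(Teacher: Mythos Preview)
Your construction is correct and, after the change of variables $r=|y|/\tau$, coincides with the paper's: there the formula is derived by Fourier transform,
\[
g^{(-1)}(y)=\int_0^1 \frac{y}{\tau^{N+1}}\,g\!\left(\frac{y}{\tau}\right)d\tau
= \frac{y}{|y|^N}\int_{|y|}^\infty r^{N-1}g\!\left(r\tfrac{y}{|y|}\right)dr,
\]
i.e.\ your expression up to an immaterial sign. The only difference is the route to the formula: the paper Taylor-expands $\widehat g(\xi)$ around $\xi=0$ (using $\widehat g(0)=\int g=0$) to solve $\widehat g=-i\,\xi\cdot\widehat{g^{(-1)}}$ and then inverts, whereas you write down the radial primitive directly from $\mathrm{div}(x/|x|^N)=0$ on $\RN\setminus\{0\}$. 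Your derivation is more elementary and makes the role of the zero-mean hypothesis explicit as the vanishing of the boundary term at the origin; the Fourier route hides this in the identity $\widehat g(0)=0$ but gives a systematic way to discover the formula. The tail estimate for (ii) is handled identically in both.

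One minor correction: the field $x/|x|^N$ \emph{is} locally integrable in $\RN$ (its modulus $|x|^{1-N}$ integrates against $r^{N-1}dr$). The genuine issue you have in mind is that its distributional divergence is a multiple of $\delta_0$, so $\mathrm{div}\,g^{(-1)}$ could a priori carry a point mass at the origin; your boundary computation
\[
\int_{|x|=\epsilon}(g^{(-1)}\!\cdot\omega)\,\varphi\,dS
=\int_{S^{N-1}}\phi(\epsilon,\omega)\,\varphi(\epsilon\omega)\,d\omega
\ \longrightarrow\ -\varphi(0)\int_{\RN} g=0
\]
is exactly the right way to rule this out.
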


\noindent An induction application of Lemma \ref{ap:lemmC3} yields the following corollary:
\begin{coro}\label{ap:coroC4} For $m = 1, 2, 3$, there are $F^{(-m)}$ such that
\begin{align*}
&F^{(-1)}: \RN \to \RN \quad \text{and}\quad  \text{div}\,F^{(-1)}(x) = F^{(0)}(x)\equiv f(x),\\
&F^{(-2)}: \RN \to \RN \times \RN \quad \text{and}\quad  \text{div}\,F^{(-2)}_i(x) = F^{(-1)}_i(x),\;\; \forall i \in \{1, \cdots, N\},\\
&F^{(-3)}: \RN \to \RN \times \RN \times \RN \quad \text{and}\quad  \text{div}\,F^{(-3)}_{i,j}(x) = F^{(-2)}_{i,j}(x),\;\; \forall i,j \in \{1, \cdots, N\},
\end{align*}
and 
\begin{equation}\label{est:apFm}
\left|F^{(-m)}(y)\right| \leq C\left\|\frac{\vartheta_-(y,\sigma)}{1 + |y|^3}\right\|_{L^\infty}\frac{\left(1 + |y|^{N + 2 - m)}\right)}{|y|^{N-1}}e^{-\frac{|y|^2}{4}}.
\end{equation}
\end{coro}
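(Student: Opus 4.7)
The strategy is a triple iteration of Lemma \ref{ap:lemmC3}, where at each step the scalar components of the tensor produced at the previous step are fed in as the input function $g$. At each of the three levels I must verify both the pointwise bound and the zero-integral hypothesis of \eqref{equ:apCong}.

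\textbf{Base case.} With $A_0 := \left\|\vartheta_-(\cdot,\sigma)/(1+|y|^3)\right\|_{L^\infty}$, I first check that $f = F^{(0)}$ satisfies \eqref{equ:apCong} with $q = 3$. The pointwise bound $|f(x)| \leq A_0(1+|x|^3)e^{-|x|^2/4}$ is immediate and is dominated by $CA_0(1+|x|^{N+2})|x|^{-(N-1)}e^{-|x|^2/4}$. The vanishing of $\int f\,dx$ follows because $\vartheta_-$ is in the range of $P_-$ and hence orthogonal in $L^2_\rho$ to $\phi_0 \equiv 1$. Lemma \ref{ap:lemmC3} then produces $F^{(-1)}$ satisfying \eqref{est:apFm} for $m = 1$.

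\textbf{Iteration and moment identity.} For $m \in \{2,3\}$, I apply Lemma \ref{ap:lemmC3} to each scalar component of $F^{(-m+1)}$. The pointwise bound inherits the form \eqref{equ:apCong} with $q = 4 - m \geq 1$ from the inductive hypothesis. The delicate point is the vanishing of $\int F^{(-m+1)}_\alpha\,dx$, which I plan to establish from the moment identity
\[
\int g^{(-1)}_i(x)\,dx \;=\; -\int x_i\, g(x)\,dx,
\]
valid for the natural radial primitive $g^{(-1)}(x) = -x\int_1^\infty t^{N-1}g(tx)\,dt$ via the change of variable $y = tx$. Applied inductively, this reduces $\int F^{(-m+1)}_\alpha\,dx$ to a polynomial moment $\int x^\beta f\,dx$ with $|\beta| = m-1$. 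Concretely, for $m = 2$ one gets $\int F^{(-1)}_i\,dx = -\int x_i f\,dx$, while for $m = 3$ a second application of the identity combined with Fubini gives $\int F^{(-2)}_{ij}\,dx = \tfrac{1}{2}\int x_i x_j f\,dx$. Both vanish because $\vartheta_-$ is orthogonal in $L^2_\rho$ to every $\phi_\beta$ with $|\beta|\leq 2$.

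\textbf{Conclusion and main obstacle.} At each of the three levels, Lemma \ref{ap:lemmC3} produces $F^{(-m)}$ with $q$ decreased by one, yielding the estimate \eqref{est:apFm} for $m = 1, 2, 3$. The main obstacle is precisely the moment verification above: while the pointwise bounds propagate mechanically, the zero-integral conditions are not implied by the divergence relation alone and rely on the orthogonality of $\vartheta_-$ to low-degree Hermite polynomials together with a compatible form of the primitive in Lemma \ref{ap:lemmC3}. This is also why the corollary stops at $m = 3$: a fourth iteration would require the vanishing of a third-order moment of $\vartheta_-$, which is not available.
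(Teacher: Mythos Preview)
Your proposal is correct and follows essentially the same route as the paper: iterate Lemma \ref{ap:lemmC3} three times, using the orthogonality of $\vartheta_-$ to $\phi_\beta$ for $|\beta|\le 2$ to supply the zero-integral hypothesis at each step. The only difference is that you make the moment identity $\int g^{(-1)}_i\,dx = \pm\int x_i g\,dx$ (and its second iterate) explicit, whereas the paper simply asserts the vanishing of $\int F^{(-m+1)}_\alpha\,dx$ as a consequence of \eqref{ap:estf=0}; note also that your primitive $-x\int_1^\infty t^{N-1}g(tx)\,dt$ differs from the paper's $\int_0^1 \tau^{-N-1}y\,g(y/\tau)\,d\tau$ only by an overall sign, which is irrelevant here.
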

\begin{proof} From \eqref{equ:orthre} and the definition \eqref{def:apf} of $f$, we see that 
\begin{equation}\label{ap:estf=0}
\int_{\RN} x^\beta f(x)dx = 0, \quad \forall\beta \in \mathbb{N}^N, \;|\beta| \leq 2.
\end{equation}
Let us write $f(x) = \frac{\vartheta_-(x,\sigma)}{1 + |x|^3} (1 + |x|^3)e^{-\frac{|x|^2}{4}}$, and note that
$$|f(x)| \leq 2\left\|\frac{\vartheta_-(x, \sigma)}{1 + |x|^3}\right\|_{L^\infty} \frac{\left(1 + |x|^{3 + N - 1}\right)}{|x|^{N - 1}}e^{-\frac{|x|^2}{4}}, \quad \forall x \in \RN.$$
Now, we use \eqref{ap:estf=0} with $\beta = 0$, then apply Lemma \ref{ap:lemmC3} with $g = f$, $A = 2\left\|\frac{\vartheta_-(x,\sigma)}{1 + |x|^3}\right\|_{L^\infty}$ and $q = 3$, we get estimate \eqref{est:apFm} for $F^{(-1)}$. Using again \eqref{ap:estf=0} with $|\beta| = 1$, we find that
$$\forall i \in \{1,\cdots, N\}, \quad \int F^{(-1)}_i(x)dx = 0.$$
For each $i\in \{1,\cdots,N\}$, we apply Lemma \ref{ap:lemmC3} with $g = F^{(-1)}_i$, $A = C\left\|\frac{\vartheta_-(x,\sigma)}{1 + |x|^3}\right\|_{L^\infty}$ and $q = 2$ to define $F^{(-2)}_i: \RN \to \RN$ such that $div\, F_i^{(-2)}(x) = F_i^{(-1)}(x)$, and to get the estimate \eqref{est:apFm} for $F^{(-2)}_i$. Similarly, we can define $F^{(-3)}$ from $F^{(-2)}$ and derive the estimate \eqref{est:apFm} by exploiting \eqref{ap:estf=0} with $|\beta| = 2$ and applying Lemma \ref{ap:lemmC3}. This concludes the proof of Corollary \ref{ap:coroC4}.
\end{proof}

\medskip

\noindent Now, using the integration by parts in \eqref{equ:tmpKe}, we write 
\begin{align}
\K(s,\sigma)\vartheta_-(\sigma) &= -\sum_{i = 1}^N\sum_{j = 1}^N\sum_{k=1}^N \int  \frac{\partial^3}{\partial x_k \partial x_j \partial x_i} G(y,x) E(y,x)F^{(-3)}_{i,j,k}(x)dx\nonumber\\
&-\sum_{i = 1}^N\sum_{j = 1}^N \int \frac{\partial^2}{\partial x_j \partial x_i}G(y,x)\left[ \nabla_x E(y,x)\cdot F^{(-3)}_{i,j}(x)\right]dx\nonumber\\
&+ \sum_{i = 1}^N \int \frac{\partial}{\partial x_i}G(y,x)\left[ \nabla_x E(y,x)\cdot F^{(-2)}_{i}(x)\right]dx \nonumber\\
& - \int G(y,x)\left[ \nabla_x E(y,x)\cdot F^{(-1)}(x)\right]dx.\label{equ:Kssigma}
\end{align}
From the definition \eqref{def:apG} of $G(y,x)$, we have 
\begin{equation}\label{est:apG}
|\nabla^{m}_xG(y,x)| \leq Ce^{-\frac{m(s - \sigma)}{2}}(1 + |x| + |y|)^m e^{\frac{|x|^2}{4}}e^{(s - \sigma)\mL}(y,x), \quad m \leq 3.
\end{equation}
Using the integration by parts formula for Gaussian measures (see pages 171-172 in \cite{GJbook87}), we write 
\begin{align*}
&\nabla_x E(y,x) = \frac{1}{2}\int_0^{s - \sigma}\int_0^{s - \sigma} d\tau d\tau' \nabla_x \Gamma(\tau, \tau')\int d\mu_{yx}^{s - \sigma}(\omega)\nabla_x \alpha(\omega(\tau), \sigma+\tau)\\
&\cdot \nabla_x\alpha(\omega(\tau'), \sigma + \tau')e^{\int_0^{s-\sigma}d\tau''\alpha(\omega(\tau''),\sigma + \tau'')}\\
& +\frac{1}{2}\int_0^{s - \sigma}d\tau \nabla_x\Gamma(\tau,\tau')\int d\mu_{yx}^{s - \sigma}(\omega)\Delta_x\alpha(\omega(\tau), \sigma + \tau)e^{\int_0^{s - \sigma}d\tau''\alpha(\omega(\tau''),\sigma + \tau'')}.
\end{align*}
Recalling from Lemma \ref{lemm:apEstalp} that $\alpha(y,s) \leq \frac{C}{s}$ and $\left|\nabla^i \alpha(y,s)\right| \leq \frac{C}{s^{i/2}}$ for $i = 0, 1, 2$, this yields $\int_0^{s - \sigma}\alpha(\omega(\tau), \sigma + \tau)d\tau \leq C$ since $s \leq 2\sigma$. Because $d\mu^{s-\sigma}_{yx}$ is a probability, we then obtain
$$\int d\mu_{yx}^{s -\sigma}(\omega)e^{\int_0^{s -\sigma} d\tau'' \alpha(\omega(\tau''), \sigma + \tau'') d\tau''} \leq C.$$
Combining this with \eqref{def:GammaCo}, we have
\begin{equation}\label{est:apE}
|E(y,x)| \leq C, \quad |\nabla_xE(y, x)| \leq C\frac{(s - \sigma)(1 + s -\sigma)}{s}(|y| + |x|).
\end{equation}
Substituting \eqref{est:apFm}, \eqref{est:apG} and \eqref{est:apE} into \eqref{equ:Kssigma},  we get
\begin{align*}
&|\K(s,\sigma)\vartheta_-(\sigma)| \left\|\frac{\vartheta_-(y,\sigma)}{1+ |y|^3}\right\|_{L^\infty}^{-1} \leq Ce^{-\frac{3}{2}(s - \sigma)}\int_{\RN}e^{(s - \sigma)\mathcal{L}}(y,x)(1 + |y| +|x|)^3 \left(\frac{1 + |x|^{N-1}}{|x|^{N - 1}}\right)dx\\
&+ C\sum_{m = 0}^2e^{-\frac{m}{2}(s - \sigma)}\frac{(s - \sigma)(1 + s - \sigma)}{s}\int_{\RN}e^{(s - \sigma)\mathcal{L}}(y,x)(1 + |y| +|x|)^{m +1} \left(\frac{1 + |x|^{N +1 - m}}{|x|^{N - 1}}\right)dx,
\end{align*}
where $e^{t\mathcal{L}}(y,x)$ is defined in \eqref{for:kernalL}. Since $\frac{(s - \sigma)(1 + s -\sigma)}{s} \leq e^{-\frac{3}{2}(s - \sigma)}$ for $\sigma$ large, we obtain
\begin{align*}
|\K(s,\sigma)\vartheta_-(\sigma)|&\leq Ce^{-\frac{3(s - \sigma)}{2}}\left\|\frac{\vartheta_-(y,\sigma)}{1+ |y|^3}\right\|_{L^\infty}\int_{\RN}e^{(s - \sigma)\mathcal{L}}(y,x)(1 + |y| + |x|)^3 \left(\frac 1 {|x|^{N-1}} + 1\right)dx\\
& = Ce^{-\frac{3(s - \sigma)}{2}}\left\|\frac{\vartheta_-(y,\sigma)}{1+ |y|^3}\right\|_{L^\infty} \big (I_1 + I_2\big),
\end{align*}
where 
\begin{align*}
I_1 &= \int_{|x| \geq 1}e^{(s - \sigma)\mathcal{L}}(y,x)(1 + |y| + |x|)^3 \left(\frac 1 {|x|^{N-1}} + 1\right)dx\\
&\leq 2\int_{|x| \geq 1}e^{(s - \sigma)\mathcal{L}}(y,x)(1 + |y| + |x|)^3dx \leq Ce^{s - \sigma}(1 + |y|^3) \quad (\text{by \eqref{pro:kerL2}}),
\end{align*}
and (note that we are considering the case $s - \sigma \geq 1$)
\begin{align*}
I_2 &= \int_{|x| \leq 1}e^{(s - \sigma)\mathcal{L}}(y,x)(1 + |y| + |x|)^3 \left(\frac 1 {|x|^{N-1}} + 1\right)dx\\
&\leq C(1 + |y|^3)\int_{|x| \leq 1}e^{(s - \sigma)\mathcal{L}}(y,x)|x|^{1-N}dx\\
& = Ce^{s - \sigma}(1 + |y|^3)\int_{|x|\leq 1}\frac{1}{(4\pi(1 - e^{-(s - \sigma)}))^{N/2}}\exp\left(-\frac{|ye^{-\frac{s-\sigma}{2}} - x|^2}{4(1 - e^{-(s - \sigma)})}\right)|x|^{1 - N}dx\\
&\leq \frac{Ce^{s - \sigma}}{(4\pi (1 - e^{-1}))^{N/2}}(1 + |y|^3)\int_{|x|\leq 1}|x|^{1 - N}dx\\
& = \frac{Ce^{s - \sigma}}{(4\pi (1 - e^{-1}))^{N/2}}(1 + |y|^3)N\omega_N\int_{0}^1r^{N-1} r^{1- N} dr \leq Ce^{s - \sigma}(1 + |y|^3),
\end{align*}
(we used in the last line the change of variable $r = |x|$ and $\omega_{N}$ denotes the volume of the ball of radius 1 in $\RN$). Therefore, for $s - \sigma \geq 1$ and for $\sigma$ large enough, we have
$$|\K(s,\sigma)\vartheta_-(\sigma)| \leq Ce^{-\frac{(s - \sigma)}{2}}\left\|\frac{\vartheta_-(y,\sigma)}{1+ |y|^3}\right\|_{L^\infty} (1 + |y|^3).$$
Note that this estimate also holds when $s - \sigma \leq 1$ as proved in \eqref{case1:s_sigmal1}. Hence, we obtain from \eqref{equ:propFP_},

\begin{equation}\label{bound:II}
|II| \leq Ce^{-\frac{(s - \sigma)}{2}}\left\|\frac{\vartheta_-(y,\sigma)}{1+ |y|^3}\right\|_{L^\infty} (1 + |y|^3).
\end{equation}
Substituting \eqref{bound:I}, \eqref{bound:II} and \eqref{bound:III} into \eqref{bound:theta_}, we get the estimate \eqref{equ:boundThe_ne}. This concludes the proof of Lemma \ref{lemm:BKespri}, assuming Lemma \ref{ap:lemmC3} holds.
\end{proof}

Let us give the proof of Lemma \ref{ap:lemmC3} to complete the proof of \eqref{equ:boundThe_ne} and the proof of Lemma \ref{lemm:BKespri} as well.
\begin{proof}[Proof of Lemma \ref{ap:lemmC3}] We apply the Fourier transform to the aimed identity $g = div\, g^{(-1)}$ on the one hand to find 
\begin{equation}\label{equ:apFouF}
\mathcal{F}(g)(\xi) = \mathcal{F} (\text{div}\, g^{(-1)})(\xi) = -\imath \sum_{k = 1}^N \xi_k \mathcal{F}(g^{(-1)}_k)(\xi).
\end{equation}
On the other hand, we use Taylor expansion to $\mathcal{F}(g)(\xi)$ and note that $\mathcal{F}(g)(0) = 0$ thanks to the first identity of \eqref{equ:apCong} to write
$$\mathcal{F}(g)(\xi) = \sum_{k = 1}^N \xi_k \int_0^1 \frac{\partial}{\partial \xi_k} \mathcal{F}(g)(\tau \xi) d\tau.$$
Since $\xi$ is arbitrary, let us define $g^{(-1)}: \RN \to \RN$ by its Fourier transform as follows:
$$\mathcal{F}(g^{(-1)}_k)(\xi) = \imath \int_0^1 \frac{\partial}{\partial \xi_k} \mathcal{F}(g)(\tau \xi) d\tau$$
and check that it satisfies the desired estimate. By \eqref{equ:apFouF}, it satisfies estimate $(i)$. By the inverse Fourier transform, we obtain the explicit formula for $g^{(-1)}_k$ with $k \in \{1, \cdots, N\}$ as follows:
\begin{align*}
g^{(-1)}_k(y) &= \frac{\imath}{2\pi} \int e^{\imath \xi \cdot y} \left(\int_0^1 \frac{\partial}{\partial \xi_k} \mathcal{F}(g)(\tau \xi) d\tau\right) d\xi\\
&= \frac{\imath}{2\pi} \int_0^1 \left(\int e^{\imath \xi \cdot y} \frac{\partial}{\partial \xi_k} \mathcal{F}(g)(\tau \xi) d\xi\right) d\tau\\
&= \frac{\imath}{2\pi} \int_0^1 \left(\int e^{\imath \xi' \cdot y/\tau} \frac{1}{\tau^N}\frac{\partial}{\partial \xi_k} \mathcal{F}(g)(\xi') d\xi'\right) d\tau\\
& = -\frac{\imath}{2\pi} \int_0^1 \left(\int e^{\imath \xi' \cdot y/\tau} \frac{\imath y_k}{\tau^{N+1}}\mathcal{F}(g)(\xi') d\xi'\right) d\tau = \int_0^1 \frac{y_k}{\tau^{N+1}}g\left(\frac{y}{\tau}\right) d\tau.
\end{align*}
Hence, 
$$g^{(-1)}(y) = \int_0^1 \frac{y}{\tau^{N+1}}g\left(\frac{y}{\tau}\right) d\tau.$$
Using the second identity of \eqref{equ:apCong} and a change of variable, we get
\begin{align*}
|g^{(-1)}(y)| &\leq \frac{A}{|y|^{N-1}}\int_0^1\frac{|y|}{\tau^{2}}\left(1 + \frac{|y|^{q + N - 1}}{\tau^{q+N-1}}\right)e^{-\frac{|y|^2}{4\tau^2}}d\tau\\
& = \frac{2A}{|y|^{N-1}} \int_{\frac{|y|}{2}}^{+\infty} (1 + \eta^{q + N - 1})e^{-\eta^2} d\eta \leq CA \frac{1 + |y|^{q + N -2}}{|y|^{N-1}}e^{-\frac{|y|^2}{4}},
\end{align*}
which concludes part $(ii)$. This finishes the proof of Lemma \ref{ap:lemmC3} and closes the proof of  Lemma \ref{lemm:BKespri}.
\end{proof}

\section*{References}

\def\cprime{$'$}

\vspace*{1cm}
\noindent $\begin{array}{ll}
\textbf{Van Tien Nguyen:} & \text{New York University in Abu Dhabi, ​Departement of Mathematics,}\\
& \text{Computational Research Building A2,}\\
& \text{Saadiyat Island, P.O. Box 129188, Abu Dhabi, UAE.}\\
&\textit{Email: Tien.Nguyen@nyu.edu}\\
&\quad \\
\textbf{Hatem Zaag:} & \text{Universit\'e Paris 13, Institut Galil\'ee, LAGA,}\\
& \text{99 Avenue Jean-Baptiste Cl\'ement,}\\
& \text{93430 Villetaneuse, France.}\\
&\textit{Email: Hatem.Zaag@univ-paris13.fr} 
\end{array}$

\end{document}